\title{The $*$-variation of the Banach-Mazur game and forcing axioms}
\date{}
\author{Yasuo~Yoshinobu}
\thanks{This work was supported by JSPS Grant-in-Aid for Scientific Research(C) 24540121.}
\address{\newline
Graduate School of Information Science\newline
Nagoya University\newline
Furo-cho, Chikusa-ku, Nagoya 464-8601\newline
JAPAN}
\email{yosinobu@is.nagoya-u.ac.jp}
\newtheorem{dfn}{Definition}[section]
\newtheorem{thm}[dfn]{Theorem}
\newtheorem{lma}[dfn]{Lemma}
\newtheorem{sublma}[dfn]{Sublemma}
\newtheorem{cor}[dfn]{Corollary}
\newcommand{\restrict}{\upharpoonright}
\newcommand{\imply}{{\ \Rightarrow\ }}
\newcommand{\force}{\Vdash}
\newcommand{\concat}[2]{{{#1}^\smallfrown#2}}
\newcommand{\seq}[1]{{\langle#1\rangle}}
\renewcommand{\p@enumii}{}
\begin{document}
\subjclass[2010]{Primary 03E57; Secondary 03E35}
\keywords{proper forcing axiom, Banach-Mazur game}

\maketitle
\begin{center}
{\it To the memory of my beloved wife Ayako}
\end{center}

\begin{abstract}
We introduce a property of posets which strengthens $(\omega_1+1)$-strategic closedness. This property is defined using a variation of the Banach-Mazur game on posets, where the first player chooses a countable set of conditions instead of a single condition at each turn. We prove $\mathrm{PFA}$ is preserved under any forcing over a poset with this property. As an application we reproduce a proof of Magidor's theorem about the consistency of $\mathrm{PFA}$ with some weak variations of the square principles. We also argue how different this property is from $(\omega_1+1)$-operational closedness, which we introduced in our previous work, by observing which portions of $\mathrm{MA}^+(\text{$\omega_1$-closed})$ are preserved or destroyed under forcing over posets with either property.
\end{abstract}

\section{Introduction}\label{sec:introduction}

As a part of studies on consequences of various forcing axioms, some studies have been devoted to understanding what kind of forcing preserves those axioms. As one of the earliest comprehensive results in this area, Larson \cite{larson00:_separ} proved that $\mathrm{MM}$ is preserved under forcing over any poset such that every pairwise compatible subset of size at most $\omega_1$ has a common extension. In fact his proof also works for $\mathrm{PFA}$ instead of $\mathrm{MM}$, and for any {\it $\omega_2$-directed\/} closed poset (that is, a poset such that every directed subset of size at most $\omega_1$ has a common extension). As for $\mathrm{PFA}$, K\"onig and the author \cite{koenig04:_fragm_maxim} extended Larson' theorem by showing that it is preserved under forcing over any $\omega_2$-closed poset, although later in \cite{ky12:_kurep_namba} they showed that it is not the case for $\mathrm{MM}$.

Can we still find any reasonable broader class of posets preserving $\mathrm{PFA}$? There are some limitations observed from known results. Caicedo and Velickovic \cite{caicedo_velickovic06:} proved the following theorem.
\begin{thm}[Caicedo and Velickovic \cite{caicedo_velickovic06:}]\label{thm:cv}
Suppose that $\mathrm{BPFA}$ holds both in the universe and in an inner model $M$, and that $M$ computes $\omega_2$ correctly. Then $M$ contains all subsets of $\omega_1$. 
\end{thm}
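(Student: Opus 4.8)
The plan is to prove the Caicedo–Velickovic theorem by a reflection/coding argument: under $\mathrm{BPFA}$ one has a definable way of reading off subsets of $\omega_1$ from the ordinals, and the key point is that this coding is absolute enough that the inner model $M$, which also satisfies $\mathrm{BPFA}$ and computes $\omega_2$ correctly, must capture every such code. Let me sketch the steps.

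Let me reconstruct the relevant facts about $\mathrm{BPFA}$. This is a proof I'd need to recall the structure of, but here's my plan.

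First I would recall the standard consequence of $\mathrm{BPFA}$ that for every $A \subseteq \omega_1$ there is a $\Sigma_1$ (or $\Delta_1$ with parameters) formula capturing membership in $A$ relative to a suitable structure — this is the content of the Caicedo–Velickovic coding, built on the observation that $\mathrm{BPFA}$ can be used to force, for any prescribed $A \subseteq \omega_1$, a club-guessing or ladder-system pattern that encodes $A$, and then $\mathrm{BPFA}$ reflects the existence of such a pattern down to the ground model. So the first step is to fix the coding machinery: a parameter-free or $\omega_1$-parametrized $\Sigma_1$ definition $\varphi(\alpha, A)$ over $H(\omega_2)$ (or over some $H(\theta)$) such that, under $\mathrm{BPFA}$, every $A \subseteq \omega_1$ is the unique set satisfying $\varphi$.

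Second I would exploit the hypothesis that $M$ computes $\omega_2$ correctly, i.e. $\omega_2^M = \omega_2$. Since the coding is $\Sigma_1$ and the relevant objects live below $\omega_2$, I would argue that the witnesses to the $\Sigma_1$ statements are absolute between $M$ and $\V$: a $\Sigma_1$ statement with a witness in $\V$ of size $<\omega_2$ should, because $M$ has the correct $\omega_2$ and $\mathrm{BPFA}$ holds in $M$, have a corresponding witness in $M$. The delicate part is that the $\Sigma_1$ formula refers to ladder systems or club-guessing sequences on $\omega_1$, and these are objects of size $\omega_1 < \omega_2$, so by correctness of $\omega_2$ they should be reflected; then a genericity/ccc-absoluteness argument shows $M$ sees the same code.

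Third, given an arbitrary $A \subseteq \omega_1$ in $\V$, I would use the coding in $\V$ to attach to $A$ an ordinal-definable code $c_A$ below $\omega_2$, observe that $c_A \in M$ (since $M$ and $\V$ agree up to $\omega_2$ on the relevant structure), and then decode $c_A$ inside $M$ using $\mathrm{BPFA}^M$ to recover $A$ as an element of $M$. The main obstacle I anticipate is establishing the downward absoluteness of the decoding: one must check that the $\Sigma_1$ witness extracted in $\V$ is genuinely available in $M$ and that $M$'s own application of $\mathrm{BPFA}$ yields exactly the same $A$ rather than a different set satisfying the formula. This hinges on the uniqueness of the decoded set, which in turn relies on the specific rigidity built into the Caicedo–Velickovic coding (that the club-guessing pattern determines $A$ uniquely), so I would isolate that uniqueness as the crucial lemma and verify it holds in both models simultaneously using the shared $\mathrm{BPFA}$ and the agreement on $\omega_2$.
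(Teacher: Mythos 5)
Before assessing your argument, note what the paper actually does with this statement: Theorem \ref{thm:cv} is \emph{not} proved there at all. It is quoted from Caicedo and Velickovic \cite{caicedo_velickovic06:} purely as motivation, to conclude that any forcing preserving both $\mathrm{PFA}$ and $\omega_2$ adds no new subsets of $\omega_1$. So your proposal can only be measured against the original Caicedo--Velickovic argument, and against that standard it has a genuine gap rather than being a proof.

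The gap is your second step. You assert that a $\Sigma_1$ statement with a witness in $V$ of size $<\omega_2$ ``should, because $M$ has the correct $\omega_2$ and $\mathrm{BPFA}$ holds in $M$, have a corresponding witness in $M$.'' No such downward transfer principle exists, and invoking it is essentially circular: the theorem being proved \emph{is} a downward transfer statement, and producing it is the entire difficulty. What $\mathrm{BPFA}^M$ gives is $\Sigma_1$-absoluteness between $M$ and \emph{proper forcing extensions of $M$}; the universe $V$ is not assumed to be, and in general is not, a forcing extension of $M$, so no forcing axiom holding in $M$ can by itself pull truths of $V$ down into $M$. Your third step has the same defect in miniature: you claim $c_A\in M$ ``since $M$ and $V$ agree up to $\omega_2$ on the relevant structure,'' but this conflates ordinals below $\omega_2$ (which lie in $M$ trivially, as $M$ is an inner model) with subsets of $\omega_1$ and countable structures below $\omega_2$ (club sets, elementary submodels of $H_{\omega_2}$, forcing-produced witnesses), whose membership in $M$ is exactly what is in question, since $\mathcal{P}(\omega_1)^M\subsetneq\mathcal{P}(\omega_1)^V$ is the situation you must rule out. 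The actual Caicedo--Velickovic proof is engineered precisely around this obstruction: one fixes a ladder system $\vec{C}$ \emph{inside $M$}, and the coding by finite oscillations relative to $\vec{C}$ is designed so that, once $\mathrm{BPFA}$ in $V$ yields ordinals $\alpha,\beta,\gamma<\omega_2$ coding $A$, the question of whether $\delta\in A$ becomes a local computation whose outcome is the same when evaluated from \emph{any} sufficiently closed countable set of ordinals, including ones available in $M$; this uniqueness-of-pattern lemma, together with the verification that $M$ has enough decoding structures (this is where $\mathrm{BPFA}^M$ earns its keep, in a role quite different from the downward-absoluteness wand you assign it), is the real content. Your sketch correctly names uniqueness of decoding as a crucial lemma, but it and the absoluteness of the decoding are left entirely unproved, and those two items are the whole theorem.
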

Note that by Theorem \ref{thm:cv} it is observed that any forcing preserving both $\mathrm{PFA}$ and $\omega_2$ adds no new subsets of $\omega_1$. 

One natural generalization of $\omega_2$-closedness still adding no new subsets of $\omega_1$ is {\it $(\omega_1+1)$-strategic closedness\/}, defined in terms of the existence of a winning strategy for the second player in the corresponding (generalized) Banach-Mazur game of length $(\omega_1+1)$.

$(\omega_1+1)$-strategic closedness is, however, not enough to preserve $\mathrm{PFA}$. In fact, the natural poset adding a $\square_{\omega_1}$-sequence is $(\omega_1+1)$-strategically closed, whereas $\square_{\omega_1}$ fails under $\mathrm{PFA}$ as proved by Todorcevic \cite{PFAnote}.

Considering these facts, one possible approach to obtain a generalization of $\omega_2$-closedness which remain to preserve $\mathrm{PFA}$ is to strengthen the notion of strategic closedness in some appropriate way.

In our previous paper \cite{yoshinobu13:_oper}, we proved that $\mathrm{PFA}$ is preserved under forcing with any {\it operationally closed\/} poset, that is, a poset such that the second player wins the corresponding Banach-Mazur game even when at each turn she is only allowed to use the Boolean infimum of preceding moves and the ordinal number of the turn to decide her move, not allowed to use full information about the preceding moves.

In this paper we introduce another strengthening of $(\omega_1+1)$-strategic closedness preserving $\mathrm{PFA}$ in the following way. We introduce a variation of the Banach-Mazur game where the first player chooses a countable set of conditions at each turn, instead of a single condition. At each moment, the Boolean infimum of all conditions he has chosen by the time plays the same role as his move in the usual Banach-Mazur game. We say a poset is {\it $*$-tactically closed\/} if the second player wins even when at each turn she is only allowed to use the set of conditions the opponent has chosen by the time to decide her move. We also introduce the notion of {\it $*$-operational closedness\/} which generalizes both operational closedness and $*$-tactical closedness, and prove that $\mathrm{PFA}$ is preserved even under forcing with any poset with this property.

As an application of this result we give a proof of the following well-known theorem originally proved by Magidor.

\begin{thm}[Magidor\cite{magidor:_jerusalem}]\normalfont\label{thm:magidor}
$\mathrm{PFA}$ is consistent with the statement that $\square_{\kappa, \omega_2}$ holds for all cardinals $\kappa$ such that $\kappa\geq\omega_2$.
\end{thm}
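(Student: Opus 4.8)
The plan is to realize the square principles by a single (class) forcing $\P$ and to verify that $\P$ falls under the preservation theorem announced above, so that $\mathrm{PFA}$ is carried into the extension while the generic object witnesses $\square_{\kappa,\omega_2}$ for every $\kappa\ge\omega_2$. First I would fix a ground model of $\mathrm{PFA}$ and, for each cardinal $\kappa\ge\omega_2$, let $\mathbb{S}_\kappa$ be the natural poset of initial approximations to a $\square_{\kappa,\omega_2}$-sequence: a condition is a sequence $\seq{\mathcal C_\alpha:\alpha\le\gamma}$ with $\gamma<\kappa^+$, where for each limit $\alpha$ the set $\mathcal C_\alpha$ is a nonempty family of at most $\omega_2$ clubs of $\alpha$, each of order type at most $\kappa$, and the coherence demand holds (if $C\in\mathcal C_\alpha$ and $\beta$ is a limit point of $C$ then $C\cap\beta\in\mathcal C_\beta$), ordered by end-extension. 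Let $\P=\prod_{\kappa\ge\omega_2}\mathbb{S}_\kappa$ with Easton support. A generic filter projects, in each coordinate $\kappa$, to a full $\square_{\kappa,\omega_2}$-sequence, so in $\V^\P$ the conclusion of the theorem holds; everything rests on preserving $\mathrm{PFA}$.

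To invoke the preservation theorem it suffices to show that $\P$ is $*$-operationally closed, and here I would use two reductions. Since any single instance of $\mathrm{PFA}$ (a proper poset together with $\omega_1$ dense sets) is a set, and the tail $\prod_{\kappa\ge\theta}\mathbb{S}_\kappa$ is $\theta$-closed for large $\theta$, that instance is already added by the set-sized initial product $\prod_{\omega_2\le\kappa<\theta}\mathbb{S}_\kappa$; this reduces the class forcing to genuine set forcing. Next, since a product of $*$-operationally closed posets is again $*$-operationally closed — player II simply runs her prescription in each coordinate on the projection of the opponent's moves — it is enough to handle a single factor $\mathbb{S}_\kappa$. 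For this factor I would in fact exhibit a $*$-tactic for player II, that is, a winning rule depending only on the set of conditions the first player has so far revealed, which is a special case of $*$-operational closedness.

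The tactic is the canonical threading rule. Given the set of conditions player I has played, note that for the running Boolean infimum to be nonzero they must cohere into one partial square sequence $s$; player II's reply extends $s$ by filling in the least limit level not yet decided. The only delicate levels are limit points $\delta$ of uncountable cofinality, where $\mathcal C_\delta$ must be populated by cofinal clubs of $\delta$ cohering with $s\restrict\delta$. Player II's rule there is to admit into $\mathcal C_\delta$ every cofinal $C\subseteq\delta$ of order type $\le\kappa$ with $C\cap\beta\in\mathcal C^{s}_\beta$ at each limit point $\beta$ that the revealed conditions force.

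The main obstacle is the verification that this rule is simultaneously legal and winning at points of cofinality $\omega_1$. Legality means that the family of threads player II is compelled to insert at such a $\delta$ has size at most $\omega_2$, so that $\mathcal C_\delta$ is a legitimate entry of a $\square_{\kappa,\omega_2}$-sequence; this is precisely where the second subscript $\omega_2$ is consumed, and is the reason the square cannot be pushed below the $\omega_2$-level while keeping $\mathrm{PFA}$. The point is that in the $*$-game player I may, through the countably many conditions he discloses at each of the $\omega_1$ rounds, race ahead and pin down $\mathcal C_\beta$ at cofinally many $\beta<\delta$ adversarially; consequently player II cannot commit to a single thread in advance and must keep an entire coherent family alive. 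The heart of the proof is the counting argument showing this family stays of size $\le\omega_2$, while at the terminal round $\omega_1$ at least one thread survives to yield a genuine lower bound. Carrying out this bookkeeping, together with checking that the canonical rule really depends only on the set of disclosed conditions (so that it is a bona fide $*$-tactic and the coordinatewise product strategy makes sense), is the technical core; the remainder is the routine verification that the generic sequences cohere into global $\square_{\kappa,\omega_2}$-sequences.
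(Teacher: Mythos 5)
Your overall architecture matches the paper's: a natural poset of initial segments of a $\square_{\kappa,\omega_2}$-sequence for each $\kappa$, an Easton-style combination of them, the preservation theorem for $*$-operationally closed forcing, and a $*$-tactic for each single factor. (The paper uses an Easton-support \emph{iteration} together with an iteration lemma rather than a product, and first passes to a model of $\mathrm{PFA}$ plus $2^\kappa=\kappa^+$ for $\kappa\geq\omega_2$ so that the standard Easton arguments give preservation of $\mathrm{ZFC}$ and cofinalities; these omissions in your write-up are comparatively minor.) The genuine gap is in the tactic itself, which is the technical core you deferred. First, you have the delicate levels in the wrong place: at every round $<\omega_1$ of $G^*(\mathbb{S}_\kappa)$, the union of the countably many conditions revealed by Player $\mathrm{I}$ has top of \emph{countable} cofinality, so these are the levels Player $\mathrm{II}$ must populate during the game; the unique level of cofinality $\omega_1$ appears only at her final ($\omega_1$-th) move, and there a \emph{single} thread suffices, so no family-counting is needed at that point. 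Second, at the countable-cofinality levels your rule ``admit every cofinal $C\subseteq\delta$ of order type $\leq\kappa$ cohering with $s\restrict\delta$'' is not legal: for a club of order type $\omega$ the coherence demand is vacuous (there are no limit points), so this family has cardinality $|\delta|^{\aleph_0}$, which exceeds $\omega_2$ as soon as $\kappa\geq\omega_3$. The counting argument you promise cannot be carried out for the rule as stated. Conversely, if at those levels you admit too few clubs (say a single one), the final move fails: which thread is needed at the top depends on Player $\mathrm{I}$'s \emph{future} moves, and its initial segments will generally not lie in the families you already committed to.

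The missing idea --- the paper's key device --- is to let the admitted family depend on the countable set $a$ of levels that actually occur as tops of conditions revealed by Player $\mathrm{I}$ (this set is recoverable from the revealed set $A$, so the rule is still a bona fide $*$-tactic): at a limit round with top $\beta$, Player $\mathrm{II}$ admits exactly the clubs of $\beta$ contained in $\beta\cap\mathrm{cl}(a)$ that cohere with the sequence built so far. Since $\mathrm{cl}(a)$ is countable, this family has size at most $2^{\aleph_0}$, which equals $\omega_2$ under $\mathrm{PFA}$ --- this is precisely where the second subscript $\omega_2$ is consumed, not at cofinality-$\omega_1$ points. At the same time the family is rich enough to win: at the final move one builds a single thread of order type $\omega_1$ by choosing, inside each block between successive limit rounds, an $\omega$-sequence of revealed tops; every proper initial segment of this thread at a limit round lies inside the corresponding closure $\mathrm{cl}(a)$ and coheres, hence was admitted into the family at that round, and induction along the limit rounds shows the thread is legal. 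Without this restriction-to-the-closure device your tactic is either illegal or not winning, so the proposal as it stands does not establish the $*$-tactical (or $*$-operational) closedness of the factors, and the appeal to the preservation theorem does not go through.
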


Since now we have two seemingly different ways to strengthen strategic closedness obtaining the preservation of $\mathrm{PFA}$, it is natural to ask if they are really different. As an answer to this question, we show that under $\mathrm{MA}^+(\text{$\omega_1$-closed})$, another well-known forcing axiom, neither operational nor $*$-tactical closedness implies the other. We do this by producing two consequences of $\mathrm{MA}^+(\text{$\omega_1$-closed})$, $\phi_1$ and $\phi_2$, such that all $*$-tactically closed posets preserve $\phi_1$ but some $*$-tactically closed poset does not preserve $\phi_2$, and all operationally closed posets preserve $\phi_2$ but some operationally closed poset does not preserve $\phi_1$.

This paper is organized as follows. In the rest of this section we introduce and prove some lemmata about forcing which will be used in later sections. In \S \ref{sec:vgBMg} we quickly review the notion of the generalized Banach-Mazur games, and then introduce the $*$-variation of the games and the notion of $*$-tactical closedness of posets. In \S \ref{sec:prePFA} we prove the preservation of $\mathrm{PFA}$ under any $*$-tactically closed forcing, and as its application we give a proof of Theorem \ref{thm:magidor}. In \S \ref{sec:SCP} we introduce a combinatorial principle named as $\mathrm{SCP^-}$ (where $\mathrm{SCP}$ stands for the {\it setwise climbability property\/}), and prove that it is equivalent to $\mathrm{MA}_{\omega_2}$ for $*$-tactically closed posets.
In \S \ref{sec:opstar1}, we show that Chang's Conjecture ($\mathrm{CC}$) holds in any generic extension by any $*$-tactically closed forcing, whenever $\mathrm{MA}^+(\text{$\omega_1$-closed})$ is assumed in the ground model. Since it is known that there exists an operationally closed poset which forces the failure of $\mathrm{CC}$, this result shows that operational closedness does not imply $*$-tactical closedness.
In \S \ref{sec:opstar2}, we show that $\mathrm{SCP^-}$ fails in any generic extension by any operationally closed forcing, whenever $\mathrm{MA}^+(\text{$\omega_1$-closed})$ is assumed in the ground model. Since the natural poset forcing $\mathrm{SCP^-}$ is $*$-tactically closed, this shows that $*$-tactical closedness does not imply operational closedness, either.

Our notation is mostly standard. We adopt the same convention as \cite{yoshinobu13:_oper} for posets: Each of our posets $\mathbb{P}$ is reflexive, transitive and separative (not necessarily antisymmetric), with one greatest element $1_{\mathbb{P}}$ being specified. For a set $A$ of ordinals, $\mathrm{cl}A$ denotes the closure of $A$ and $\mathrm{l.p.}(A)$ denotes the set of limit points of $A$. For $i=0$ or $1$, $S^2_i$ denotes the set $\{\alpha<\omega_2\mid\mathrm{cf}\alpha=\omega_i\}$. We will use the following notations in later sections: For sets $M$, $N$ and an ordinal $\delta<\omega_2$, we denote $M\prec_\delta N$ if $M\prec N$ (as $\in$-structures) and $M\cap\delta=N\cap\delta$ hold. We also denote $M\prec^s_\delta N$ if $M\prec_\delta N$ and $M\cap\omega_2\subsetneq N\cap\omega_2$ hold.

We end this section with introducing a couple of lemmata which will be used in \S \ref{sec:opstar1} and \S \ref{sec:opstar2}.

\begin{dfn}\normalfont\label{dfn:strgen}
Let $\mathbb{P}$ be a poset and $N$ a set.
\begin{enumerate}[(1)]
\item We say $q\in\mathbb{P}$ is {\it $(N, \mathbb{P})$-strongly generic\/} if for every $D\in N$ which is a dense subset of $\mathbb{P}$ there exists an $r\in D\cap N$ such that $r\geq_\mathbb{P}q$.
\item For an $(N, \mathbb{P})$-strongly generic $q$, we denote
$$
q\restrict N:=\bigwedge\{r\in N\cap\mathbb{P}\mid r\geq_\mathbb{P}q\},
$$
where the meet in the right-hand side is computed in the boolean completion $\mathcal{B}(\mathbb{P})$ of $\mathbb{P}$.
\item We say a $\leq_\mathbb{P}$-descending sequence $\seq{p_n\mid n<\omega}$ is {\it $(N, \mathbb{P})$-generic\/} if $p_n\in N$ for every $n<\omega$ and for every dense subset $D\in N$ of $\mathbb{P}$ there exists an $n<\omega$ such that $p_n\in D$. 
\end{enumerate}
\end{dfn}

\begin{lma}\normalfont\label{lma:strgen}
Let $\mathbb{P}$ be a poset, $\theta$ a regular uncountable cardinal such that $\mathbb{P}\in H_\theta$ and $N$ an elementary submodel of $\seq{H_\theta, \in}$ with $\mathbb{P}\in N$. Suppose $\seq{p_n\mid n<\omega}$ is an $(N, \mathbb{P})$-generic sequence and $q$ is a common extension of the $p_n$'s. then $q$ is $(N, \mathbb{P})$-strongly generic and it holds that
$$
q\restrict N=\bigwedge\{p_n\mid n<\omega\}.
$$
\end{lma}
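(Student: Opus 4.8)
The plan is to treat the two conclusions separately, the strong genericity being immediate and the identification of $q\restrict N$ requiring a short density argument. For the first part, fix a dense $D\in N$. Since $\seq{p_n\mid n<\omega}$ is $(N,\mathbb{P})$-generic there is an $n<\omega$ with $p_n\in D$, and by definition $p_n\in N$; as $q$ is a common extension of the $p_n$'s we have $p_n\geq_\mathbb{P}q$. Thus $r:=p_n$ witnesses that $q$ is $(N,\mathbb{P})$-strongly generic, so in particular $q\restrict N$ is defined.

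For the second part, write $A:=\{r\in N\cap\mathbb{P}\mid r\geq_\mathbb{P}q\}$ and $B:=\{p_n\mid n<\omega\}$, so that by definition $q\restrict N=\bigwedge A$ (computed in $\mathcal{B}(\mathbb{P})$) and the goal is $\bigwedge A=\bigwedge B$. One inequality is formal: each $p_n$ lies in $N\cap\mathbb{P}$ and satisfies $p_n\geq_\mathbb{P}q$, so $B\subseteq A$, whence $\bigwedge A\leq\bigwedge B$ (the infimum over the larger set is the smaller).

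The substantive direction is $\bigwedge B\leq\bigwedge A$, for which it suffices to show $\bigwedge B\leq r$ for every $r\in A$. Fix such an $r$. I would consider the set
$$
D_r:=\{s\in\mathbb{P}\mid s\leq_\mathbb{P}r\ \text{or}\ s\perp r\},
$$
which is dense in $\mathbb{P}$ (given $t$, either $t\perp r$, or a common extension of $t$ and $r$ lands in $D_r$) and, being definable from the parameters $r,\mathbb{P}\in N$, belongs to $N$. By $(N,\mathbb{P})$-genericity there is an $n$ with $p_n\in D_r$. Now $p_n$ and $r$ are both $\geq_\mathbb{P}q$, so $q$ witnesses their compatibility and the alternative $p_n\perp r$ is excluded; hence $p_n\leq_\mathbb{P}r$, and therefore $\bigwedge B\leq p_n\leq r$ in $\mathcal{B}(\mathbb{P})$. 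As $r\in A$ was arbitrary, $\bigwedge B$ is a lower bound of $A$, giving $\bigwedge B\leq\bigwedge A$ and completing the argument.

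The only point requiring care, and the main obstacle, is this hard direction: one must convert mere compatibility of $p_n$ with $r$ into the genuine inequality $p_n\leq_\mathbb{P}r$. This is exactly what the decision set $D_r$ accomplishes, the key being that the common extension $q$ rules out the incompatibility alternative offered by $D_r$; separativity of $\mathbb{P}$ then guarantees that the ordering of $\mathbb{P}$ agrees with that of its completion, so $p_n\leq_\mathbb{P}r$ does yield $\bigwedge B\leq r$ in $\mathcal{B}(\mathbb{P})$.
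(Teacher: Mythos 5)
Your proof is correct and takes essentially the same approach as the paper: your decision set $D_r$ is exactly the paper's dense set $E_r=\{p\in\mathbb{P}\mid p\leq_\mathbb{P}r\lor p\perp_\mathbb{P}r\}$, and both arguments use the common extension $q$ to exclude the incompatibility alternative and conclude $p_n\leq_\mathbb{P}r$. The only difference is expository: you spell out the strong genericity step and the easy inclusion $\{p_n\mid n<\omega\}\subseteq\{r\in N\cap\mathbb{P}\mid r\geq_\mathbb{P}q\}$, which the paper dismisses as immediate from the definitions.
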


\begin{proof}
By definitions it is easy to see that $q$ is $(N, \mathbb{P})$-strongly generic and $q\restrict N\leq_\mathbb{P}p_n$ for each $n<\omega$.
For each $r\in N\cap\mathbb{P}$ such that $r\geq_\mathbb{P}q$, $$E_r=\{p\in\mathbb{P}\mid p\leq_\mathbb{P}r\lor p\perp_\mathbb{P} r\}$$ is a dense subset of $\mathbb{P}$ and $E_r\in N$ holds, and thus there exists an $n<\omega$ such that $p_n\in E_r$. Since $r$ and $p_n$ has $q$ as a common extension, $p_n\leq_\mathbb{P}r$ must be the case. This gives the required equality.
\end{proof}

Recall that for posets $\mathbb{P}$ and $\mathbb{R}$, a mapping $\pi:\mathbb{R}\to\mathbb{P}$ is said to be a {\it projection\/} if it satisfies the following conditions:
\begin{enumerate}[(a)]
\item $\pi$ is order-preserving.
\item $\pi(1_\mathbb{R})=1_\mathbb{P}$.
\item $\forall r\in\mathbb{R}\forall p\in\mathbb{P}[\pi(r)\geq_\mathbb{P}p\imply\exists r'\leq_{\mathbb{R}}r[p\geq_\mathbb{P}\pi(r')]]$.
\end{enumerate}  

For basic properties of projections see \cite{cummings:_iterated}. Note that, for a projection $\pi:\mathbb{R}\to\mathbb{P}$, whenever $G$ is an $\mathbb{R}$-generic filter over $V$, $\pi''G$ generates a $\mathbb{P}$-generic filter $\pi_*(G)$ over $V$, and $V[G]$ contains $V[\pi_*(G)]$. Knowing this, in later sections we often abusively use each $\mathbb{P}$-name $\tau$ to denote the $\mathbb{R}$-name representing $\tau_{\pi_*(G)}$ in $V[G]$ whenever $G$ is $\mathbb{R}$-generic over $V$.

\begin{lma}\normalfont\label{lma:onestep}
Let $\mathbb{P}$ and $\mathbb{R}$ be posets and $\pi: \mathbb{R}\to\mathbb{P}$ a projection. Suppose $\theta$ is a regular uncountable cardinal such that $\mathbb{P}$, $\mathbb{R}$, $\pi\in H_\theta$, $N$ is a countable elementary submodel of $\seq{H_\theta, \in}$ with $\mathbb{P}$, $\mathbb{R}$, $\pi\in N$, and $q\in\mathbb{P}$ is an $(N, \mathbb{P})$-strongly generic condition. Then for any dense subset $D\in N$ of $\mathbb{R}$ and any $p\in N\cap\mathbb{R}$ satisfying $\pi(p)\geq_\mathbb{P}q$ there exists $p'\leq_\mathbb{R}p$ such that $p'\in N\cap D$ and that $\pi(p')\geq_\mathbb{P}q$.
\end{lma}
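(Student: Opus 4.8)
The plan is to reduce the claim to the $(N,\mathbb{P})$-strong genericity of $q$ by manufacturing a single dense subset of $\mathbb{P}$ that already encodes the existence of the sought-after refinement $p'$. Concretely, working in $H_\theta$ I would set
$$
E=\{s\in\mathbb{P}\mid s\perp_\mathbb{P}\pi(p)\ \lor\ \exists p'\leq_\mathbb{R}p\,[\,p'\in D\ \land\ \pi(p')\geq_\mathbb{P}s\,]\}.
$$
Since $E$ is definable from the parameters $\mathbb{P}$, $\mathbb{R}$, $\pi$, $p$, $D$, all of which lie in $N$, elementarity gives $E\in N$.

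First I would check that $E$ is dense. Given any $s_0\in\mathbb{P}$, either $s_0\perp_\mathbb{P}\pi(p)$, in which case $s_0\in E$, or $s_0$ is compatible with $\pi(p)$, so there is $s_1\leq_\mathbb{P}s_0,\pi(p)$. From $\pi(p)\geq_\mathbb{P}s_1$ the projection property (c) yields $r'\leq_\mathbb{R}p$ with $\pi(r')\leq_\mathbb{P}s_1$, and density of $D$ in $\mathbb{R}$ gives $p'\leq_\mathbb{R}r'$ with $p'\in D$. Then $p'\leq_\mathbb{R}p$ and, as $\pi$ is order-preserving, $\pi(p')\leq_\mathbb{P}\pi(r')\leq_\mathbb{P}s_1\leq_\mathbb{P}s_0$; taking $s:=\pi(p')$ witnesses both $s\leq_\mathbb{P}s_0$ and $s\in E$.

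Next I would apply the strong genericity of $q$ to the dense set $E\in N$ to obtain $s\in E\cap N$ with $s\geq_\mathbb{P}q$. The incompatibility disjunct in the definition of $E$ cannot hold for this $s$: since $s\geq_\mathbb{P}q$ and, by hypothesis, $\pi(p)\geq_\mathbb{P}q$, the condition $q$ is a common extension of $s$ and $\pi(p)$, so $s\not\perp_\mathbb{P}\pi(p)$. Hence the existential disjunct holds, i.e. $H_\theta$ satisfies $\exists p'\leq_\mathbb{R}p\,[p'\in D\land\pi(p')\geq_\mathbb{P}s]$. Because $s\in N$ and the remaining parameters are in $N$, elementarity produces such a witness $p'\in N$. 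This $p'$ satisfies $p'\leq_\mathbb{R}p$, $p'\in N\cap D$, and $\pi(p')\geq_\mathbb{P}s\geq_\mathbb{P}q$, which is exactly what is required.

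The crux of the argument is the very design of $E$: the point is to fold the unbounded search for $p'$ into a dense set, so that strong genericity together with elementarity can return the witness inside $N$ — this reflection step is where the real work lies, everything else being a routine density check. The two things one must be careful about are the orientation of the inequalities (making sure the projection clause reads $\pi(p')\geq_\mathbb{P}s$, rather than $\leq$, so that $\pi(p')\geq_\mathbb{P}q$ survives at the end) and the verification that the incompatibility alternative is genuinely excluded by the shared extension $q$.
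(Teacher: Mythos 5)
Your proof is correct and follows essentially the same route as the paper's: both transfer the dense set $D_p=\{r\in D\mid r\leq_\mathbb{R}p\}$ into $\mathbb{P}$ via the projection $\pi$, apply the strong genericity of $q$ to obtain a condition of $N$ above $q$, and then pull a witness $p'$ back into $N\cap D$ using elementarity. If anything, your version is slightly more careful on one point: the paper applies strong genericity directly to $\pi''D_p$, which is only dense \emph{below} $\pi(p)$, whereas your padding disjunct $s\perp_\mathbb{P}\pi(p)$ turns the relevant set into a genuinely dense subset of $\mathbb{P}$, which is what Definition \ref{dfn:strgen} literally requires, and the compatibility argument via the common extension $q$ then discharges that disjunct exactly as needed.
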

\begin{proof}
Note that $D_p=\{r\in D\mid r\leq_\mathbb{R}p\}$ is dense below $p$ and $D_p\in N$ holds. Since $\pi$ is a projection and is in $N$, $\pi''D_p$ is dense below $\pi(p)$ and $\pi''D_p\in N$ holds. Since $q$ is $(N, \mathbb{P})$-strongly generic, there exists an $r\in\pi''D_p\cap N$ such that $r\geq_\mathbb{R}q$. But then again since $\pi\in N$ we have that there exists $p'\in D_P\cap N$ such that $\pi(p')=r$. This $p'$ satisfies all requirements.
\end{proof}

\section{A variation of generalized Banach-Mazur game}\label{sec:vgBMg}

Let us first quickly review some basics of the Banach-Mazur games on posets, introduced by Jech \cite{jech:_game} and generalized by Foreman \cite{foreman83:_games_boolean}. Our notation mostly follows \cite{yoshinobu13:_oper}. For a poset $\mathbb{P}$ and an ordinal $\alpha$, the two-player game $G_\alpha(\mathbb{P})$ is played as follows: Player $\mathrm{I}$ and $\mathrm{II}$ take turns to choose $\mathbb{P}$-conditions one-by-one, so that each move is stronger than all preceding moves. Their turns take place in a well-ordered timeline. Player $\mathrm{I}$ goes first in the beginning of the game, whereas at other limit turns Player $\mathrm{II}$ goes first. Therefore a play of this game can be displayed as follows: 
$$
\begin{matrix}
\mathrm{I}: & a_0 & a_1 & a_2 & \cdots & & a_{\omega+1} & \cdots\\
\mathrm{II}: & \hspace{36pt} b_0 & \hspace{36pt} b_1 & \hspace{36pt} b_2 & \cdots & b_\omega & \hspace{36pt} b_{\omega+1} & \cdots
\end{matrix}
$$
Throughout this paper, we use the above numbering of turns, that is, we consider that Player $\mathrm{I}$ skips his limit turns. Player $\mathrm{II}$ wins this game if she was able to take turns $\alpha$ times, without becoming unable to make legitimate moves on the way. $\alpha$ is called as the {\it length\/} of the game $G_\alpha(\mathbb{P})$. In this paper we are mainly interested in games of length $(\omega_1+1)$.

In each turn of Player $\mathrm{II}$ during a play of $G_\alpha(\mathbb{P})$, we call the sequence $s$ of preceding moves of Player $\mathrm{I}$ (in the chronological order) as the {\it current status\/} of the turn. We call $\bigwedge s$ (computed in $\mathcal{B}(\mathbb{P})$) as the {\it current position\/} of the turn. This terminology makes sense because $\bigwedge s$ gives the exact upper bound of possible moves of Player $\mathrm{II}$ of the turn. We also call the order type of preceding moves of Player $\mathrm{II}$ as the {\it ordinal number\/} of the turn.

A {\it strategy\/} (of Player $\mathrm{II}$ for $G_\alpha(\mathbb{P})$) is a function which, in each turn of Player $\mathrm{II}$, takes the current status of the turn as an argument, and gives a $\mathbb{P}$-condition as a suggestion for Player $\mathrm{II}$'s move of the turn. A strategy $\sigma$ is a {\it winning strategy\/} if Player $\mathrm{II}$ wins any play of $G_\alpha(\mathbb{P})$ as long as she plays as $\sigma$ suggests.

A strategy is called an {\it operation\/} ({\it resp. tactic\/}) if its suggestion depends only on the current position and the ordinal number ({\it resp. only on the current position\/}) of each turn.

We say $\mathbb{P}$ is {\it $\alpha$-strategically\/} ({\it resp. $\alpha$-operationally, $\alpha$-tactically\/}) {\it closed\/} if there exists a winning strategy ({\it resp.\/} operation, tactic).

The following lemma is frequently used in later sections.

\begin{lma}\normalfont\label{lma:gseqstrcl}
Suppose $\mathbb{P}$ is $(\omega+1)$-strategically closed poset, $\tau$ is a winning strategy for $G_{\omega+1}(\mathbb{P})$, $\theta$ is a regular cardinal such that $\mathbb{P}\in H_\theta$ and $\seq{p_n\mid n<\omega}$ is an $(N, \mathbb{P})$-generic sequence for a countable $N\prec\seq{H_\theta, \in, \mathbb{P}, \tau}$. Then $p_n$'s have a common extension in $\mathbb{P}$.
\end{lma}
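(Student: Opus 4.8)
The plan is to simulate a play of $G_{\omega+1}(\mathbb{P})$ in which Player $\mathrm{II}$ follows $\tau$ and Player $\mathrm{I}$'s moves, all chosen inside $N$, are steered so that the move $\tau$ produces at turn $\omega$ is a common extension of all the $p_n$'s. Write $G=\{r\in N\cap\mathbb{P}\mid\exists n\ p_n\leq_\mathbb{P}r\}$ for the filter generated by the generic sequence. Since $\seq{p_n\mid n<\omega}$ meets every dense set in $N$, for each $b\in N\cap\mathbb{P}$ either $b\in G$ or some $p_n$ is incompatible with $b$, and any $b\in G$ has a common extension with every $p_n$ (found again in the sequence). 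I would build by recursion Player $\mathrm{I}$'s moves $a_0\geq_\mathbb{P}a_1\geq_\mathbb{P}\cdots$ in $N$, with Player $\mathrm{II}$ answering $b_k=\tau(\seq{a_0,\dots,a_k})$, maintaining two invariants: (i) $b_k\in G$, so that the current position stays compatible with the whole tail of the generic sequence, and (ii) for the $k$-th target index $m$ we secure $a_k\leq_\mathbb{P}p_m$.

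The heart of the construction is the recursion step that preserves (i) while achieving (ii). Given $b_{k-1}\in G$ (with $b_{-1}=1_\mathbb{P}$) and a target $m$, I would first pick $p_{m''}\in G$ with $p_{m''}\leq_\mathbb{P}b_{k-1}$ and $p_{m''}\leq_\mathbb{P}p_m$, using that $b_{k-1}$ and $p_m$ both lie in $G$. Then I would consider the set
$$
D_k=\{r\in\mathbb{P}\mid r\leq_\mathbb{P}p_{m''}\text{ and }\exists a\leq_\mathbb{P}p_{m''}\,[\,r\leq_\mathbb{P}\tau(\seq{a_0,\dots,a_{k-1},a})\,]\},
$$
which lies in $N$ and is dense below $p_{m''}$: for any $s\leq_\mathbb{P}p_{m''}$ the condition $\tau(\seq{a_0,\dots,a_{k-1},s})$ is itself $\leq_\mathbb{P}s$ and witnesses the existential, so it belongs to $D_k$. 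Adjoining the conditions incompatible with $p_{m''}$ yields a dense subset of $\mathbb{P}$ in $N$, which the generic sequence must meet; since $p_{m''}\in G$, the meeting point is some $p_{m^*}\in D_k$. By elementarity of $N$ there is then a witness $a\in N$ with $a\leq_\mathbb{P}p_{m''}$ and $p_{m^*}\leq_\mathbb{P}\tau(\seq{a_0,\dots,a_{k-1},a})$. Setting $a_k:=a$ gives $a_k\leq_\mathbb{P}p_{m''}\leq_\mathbb{P}b_{k-1}$ (so the move is legitimate) and $a_k\leq_\mathbb{P}p_{m''}\leq_\mathbb{P}p_m$ (invariant (ii)), while $b_k=\tau(\seq{a_0,\dots,a_k})\geq_\mathbb{P}p_{m^*}$ lies in $G$ (invariant (i)).

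Having run this for all $k<\omega$, cycling through all targets $m<\omega$, I would invoke that $\tau$ is a winning strategy for $G_{\omega+1}(\mathbb{P})$: the partial play $\seq{a_0,b_0,a_1,b_1,\dots}$ of length $\omega$ is legitimate and played according to $\tau$, so Player $\mathrm{II}$ is not stuck at turn $\omega$ and $q:=\tau(\seq{a_k\mid k<\omega})$ is a legitimate move, i.e.\ $q\leq_\mathbb{P}a_k$ for every $k$. For each $m$ some $a_k$ was arranged with $a_k\leq_\mathbb{P}p_m$, whence $q\leq_\mathbb{P}a_k\leq_\mathbb{P}p_m$; thus $q$ is a common extension of the $p_n$'s, as required.

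I expect the main obstacle to be exactly invariant (i). For an arbitrary winning strategy there is no reason that $\tau$'s answer to a condition in $G$ should again lie in $G$: $\tau$ may respond with a condition incompatible with the tail of the generic sequence, after which no legitimate move can be pushed below the remaining $p_m$'s, and the naive ``play the $p_n$'s as Player $\mathrm{I}$'' strategy collapses. The device above circumvents this not by controlling where $\tau$ lands but by forcing the generic sequence itself to dive below one of $\tau$'s answers; this works precisely because $\tau(\seq{a_0,\dots,a_{k-1},a})\leq_\mathbb{P}a$ always holds, which is what makes $D_k$ dense.
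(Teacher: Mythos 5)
Your proof is correct and takes essentially the same approach as the paper's: simulate a play of $G_{\omega+1}(\mathbb{P})$ whose Player $\mathrm{I}$ moves are produced inside $N$, using dense sets built from the image of $\tau$ together with the genericity of $\seq{p_n\mid n<\omega}$ and elementarity of $N$ to keep the play interleaved with the generic sequence, and then invoke the winning move of Player $\mathrm{II}$ at turn $\omega$. The only difference is bookkeeping: the paper arranges Player $\mathrm{II}$'s responses to be literally a subsequence $\seq{p_{i_n}\mid n<\omega}$ of the generic sequence, by meeting the exact image $\{\tau(\seq{q_0,\ldots,q_n,q})\mid q\leq_\mathbb{P}p_{i_n+1}\}$, whereas you only sandwich them ($p_{m^*}\leq_\mathbb{P}b_k$ and $a_k\leq_\mathbb{P}p_m$) by meeting the downward closure of that image --- the same mechanism.
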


\proof We may assume that $p_n$'s are strictly increasing. We will define a strictly increasing sequence $\seq{i_n\mid n<\omega}$ of natural numbers and a sequence $\seq{q_n\mid n<\omega}$ of conditions of $\mathbb{P}\cap N$ as follows. Since $D_0=\{\tau(\seq{q})\mid q\in\mathbb{P}\}$ is dense in $\mathbb{P}$ and is definable in $\seq{H_\theta, \in, \mathbb{P}, \tau}$ and thus lies in $N$, we may pick an $i_0<\omega$ so that $p_{i_0}\in D_0$. Since $p_{i_0}\in N$ we may also pick $q_0\in N\cap\mathbb{P}$ such that $p_{i_0}=\tau(\seq{q_0})$. Note that $\seq{q_0, p_{i_0}}$ forms a part of a play of $G_{\omega+1}(\mathbb{P})$ where Player $\mathrm{II}$ plays as $\tau$ suggests. Now assume $\seq{q_0, p_{i_0}, \ldots, q_n, p_{i_n}}\in N$ was defined and forms a part of a play of $G_{\omega+1}(\mathbb{P})$ where Player $\mathrm{II}$ plays as $\tau$ suggests. Since $D_{n+1}=\{\tau(\seq{q_0, q_1, \ldots, q_n, q})\mid q\leq_\mathbb{P}p_{i_n+1}\}$ is dense below $p_{i_n+1}$ and lies in $N$, we may pick an $i_{n+1}>i_n$ and $q_{n+1}\leq_\mathbb{P}p_{i_n}$ such that $q_{n+1}\in N$ and $\tau(\seq{q_0, q_1, \ldots, q_n, q_{n+1}})=p_{i_{n+1}}$. This assures that $\seq{q_0, p_{i_0}, \ldots, q_n, p_{i_n}, q_{n+1}, p_{i_{n+1}}}$ is in $N$ and forms a part of a play of $G_{\omega+1}(\mathbb{P})$ where Player $\mathrm{II}$ plays as $\tau$ suggests, and thus the construction goes on. In the end we have that $\seq{q_n, p_{i_n}\mid n<\omega}$ forms a part of a play of $G_{\omega+1}(\mathbb{P})$ where Player $\mathrm{II}$ plays as $\tau$ suggests, and thus has a common extension in $\mathbb{P}$. Since $\seq{p_{i_n}\mid n<\omega}$ is cofinal in $\seq{p_n\mid n<\omega}$, the latter also has a common extension in $\mathbb{P}$. \qed

Now we introduce a variation of $G_{\omega_1+1}(\mathbb{P})$ to define new game closedness properties of posets.

\begin{dfn}\normalfont
For a poset $\mathbb{P}$, $G^*(\mathbb{P})$ denotes the following game: Players take turns in the same way as in $G_{\omega_1+1}(\mathbb{P})$, but Player $\mathrm{I}$ chooses a subset of $\mathbb{P}$ of size at most countable at each turn, instead of a single condition.
Therefore a play of $G^*(\mathbb{P})$ can be displayed as follows:

$$
\begin{matrix}
\mathrm{I}: & A_0  & A_1 & A_2 & \cdots & & A_{\omega+1} & \cdots\phantom{,}\\
\mathrm{II}: & \hspace{36pt} b_0 & \hspace{36pt} b_1 & \hspace{36pt} b_2 & \cdots & b_\omega & \hspace{36pt} b_{\omega+1} & \cdots,
\end{matrix}
$$
where $A_\gamma\in[\mathbb{P}]^{\leq\aleph_0}\setminus\{\emptyset\}$ for each $\gamma\in\omega_1\setminus\mathrm{Lim}$, and $b_\gamma\in\mathbb{P}$ for each $\gamma<\omega_1$. They must obey the following rules (Player $\mathrm{I}$ is responsible for (\ref{cond:gamerulea})--(\ref{cond:gamerulec}) and Player $\mathrm{II}$ is for (\ref{cond:gameruled}):
\begin{enumerate}[(a)]
\item $\seq{A_\gamma\mid\gamma\in\omega_1\setminus\mathrm{Lim}}$ is $\subseteq$-increasing.\label{cond:gamerulea}
\item For each $\gamma\in\omega_1\setminus\mathrm{Lim}$, $A_\gamma$ has a common extension in $\mathbb{P}$.\label{cond:gameruleb}
\item For each $\gamma<\omega_1$, it holds that $\bigwedge A_{\gamma+1}\leq_{\mathcal{B}(\mathbb{P})}b_\gamma$.\label{cond:gamerulec}
\item For each $\gamma<\omega_1$, $b_\gamma$ is a common extension of $A_\gamma$ (for limit $\gamma$ we define $A_\gamma=\bigcup\{A_\xi\mid\xi\in\gamma\setminus\mathrm{Lim}\}$).\label{cond:gameruled}
\end{enumerate}
Again, Player $\mathrm{II}$ wins this game if she was able to make her $\omega_1$-th move.
\end{dfn}

Note that, in the above play of $G^*(\mathbb{P})$, if we replace each $A_\gamma$ with its boolean infimum, we will obtain a play of $G_{\omega_1+1}(\mathcal{B}(\mathbb{P}))$. In fact, as long as players play with perfect recall, $G^*(\mathbb{P})$ is essentially an equivalent game to $G_{\omega_1+1}(\mathcal{B}(\mathbb{P}))$, and even to $G_{\omega_1+1}(\mathbb{P})$. The point of introduction of $G^*(\mathbb{P})$ lies in cases when players have limited memory on preceding moves.

\begin{dfn}\normalfont
Let $\mathbb{P}$ be a poset. We let $[\mathbb{P}]^{\leq\aleph_0}_+$ denote the set $\{A\in[\mathbb{P}]^{\leq\aleph_0}\setminus\{\emptyset\}\mid\text{$A$ has a common extension in $\mathbb{P}$}\}$.
\begin{enumerate}[(1)]
\item A function from $\omega_1\times[\mathbb{P}]^{\leq\aleph_0}_+\to\mathbb{P}$ is called a {\it $*$-operation\/} for $G^*(\mathbb{P})$. In a play of $G^*(\mathbb{P})$, we say Player $\mathrm{II}$ {\it plays according to\/} a $*$-operation $\tau$ if, for each $\delta<\omega_1$ she chooses $\tau(\delta, A_\delta)$ as her $\delta$-th move as long as it is a legal move, where $A_\delta$ denotes the union of subsets of $\mathbb{P}$ chosen by Player $\mathrm{I}$ by the time. $\tau$ is said to be a {\it winning $*$-operation\/} for $G^*(\mathbb{P})$ if Player $\mathrm{II}$ wins any play of $G^*(\mathbb{P})$ as long as she plays according to $\tau$. $\mathbb{P}$ is {\it $*$-operationally closed \/} if there exists a winning $*$-operation for $G^*(\mathbb{P})$.
\item A $*$-operation $\tau$ for $G^*(\mathbb{P})$ is called a {\it $*$-tactic\/} if the values of $\tau$ do not depend on its first argument. We often consider a $*$-tactic simply as a function defined on $[\mathbb{P}]^{\leq\aleph_0}_+$. $\mathbb{P}$ is {\it $*$-tactically closed\/} if there exists a winning $*$-tactic for $G^*(\mathbb{P})$. 
\end{enumerate}
\end{dfn}

A nontrivial example of a $*$-tactically closed forcing is found among the class of natural posets forcing the following combinatorial principles introduced by Schimmerling \cite{schimmerling:_onewoodin} which are weaker variations of Jensen's square principles.

\begin{dfn}\normalfont\label{dfn:wsq}
For an uncountable cardinal $\kappa$ and a cardinal $\lambda$ with $1\leq\lambda\leq\kappa$, $\square_{\kappa, \lambda}$ denotes the following statement: There exists a sequence $\vec{\mathcal{C}}=\seq{\mathcal{C}_\alpha\mid\alpha\in\kappa^+\cap\mathrm{Lim}}$ such that for every limit $\alpha<\kappa^+$

\begin{enumerate}[(i)]
  \item $1\leq|\mathcal{C}_\alpha|\leq\lambda$.\label{cond1}
  \item $\mathcal{C}_\alpha$ consists of club subsets of $\alpha$ of order type $\leq\kappa$.\label{cond2}
  \item For every $C\in\mathcal{C}_\alpha$ and $\beta\in\mathrm{l.p.}(C)$, $C\cap\beta\in\mathcal{C}_\beta$ holds.\label{cond3}
\end{enumerate}
\end{dfn}

Note that $\square_{\kappa, 1}$ is the original Jensen's square principle $\square_\kappa$.

\begin{dfn}\normalfont\label{dfn:pskl}
Let $\kappa$ and $\lambda$ be as in Definition \ref{dfn:wsq}. $\mathbb{P}_{\square_{\kappa, \lambda}}$ denotes the following poset. A condition $p\in\mathbb{P}_{\square_{\kappa, \lambda}}$ is either the empty sequence $1_{\mathbb{P}_{\square_{\kappa, \lambda}}}=\seq{}$, or of the form
\begin{equation}\label{eqn:pform}
p=\seq{\mathcal{C}^p_\alpha\mid\alpha\in(\alpha^p+1)\cap\mathrm{Lim}}\quad\text{(where $\alpha^p\in\kappa^+\cap\mathrm{Lim}$)}
\end{equation}
which satisfies (\ref{cond1})--(\ref{cond3}) of Definition \ref{dfn:wsq} for every limit $\alpha\leq\alpha^p$. $\mathbb{P}_{\square_{\kappa, \lambda}}$ is ordered by initial segment.
\end{dfn}

Note that, since $\mathbb{P}_{\square_{\kappa, \lambda}}$ is $(\kappa+1)$-strategically closed, it preserves cardinalities below $\kappa^+$ and thus forces $\square_{\kappa, \lambda}$.

\begin{thm}\label{thm:wsq}
Let $\kappa$ be an uncountable cardinal, and $\lambda$ a cardinal satisfying $2^{\aleph_0}\leq\lambda\leq\kappa$. Then $\mathbb{P}_{\square_{\kappa, \lambda}}$ is $*$-tactically closed.
\end{thm}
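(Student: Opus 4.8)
The plan is to build an explicit winning $*$-tactic $\tau$ for $G^*(\mathbb{P}_{\square_{\kappa,\lambda}})$ and to reduce the whole statement to a single combinatorial construction of a \emph{level-assignment} that can be carried out precisely because we are allowed up to $\lambda\geq 2^{\aleph_0}$ clubs at each level. Given an input $A\in[\mathbb{P}_{\square_{\kappa,\lambda}}]^{\le\aleph_0}_+$, the conditions in $A$ pairwise cohere (they have a common extension), so their union $q^*=\bigcup A$ is a coherent partial $\square_{\kappa,\lambda}$-sequence with domain $D=\bigcup_{p\in A}\dom(p)$. If $\sup D$ is attained in $D$, I let $\tau(A)=q^*$ (it is already a condition). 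Otherwise $\eta:=\sup D$ is a limit of $D$ with $\cf\eta=\omega$ (as $A$ is countable and $\eta\notin D$), and I set $\tau(A)=\concat{q^*}{\seq{\mathcal{C}_\eta}}$, where the new top level $\mathcal{C}_\eta$ is produced by a fixed assignment $\Phi$ \emph{depending only on} $q^*\restrict\eta$.

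The point of making $\mathcal{C}_\eta$ depend only on the part of the sequence strictly below $\eta$ is that it renders $\tau$ automatically coherent across the turns of a play: since Player $\mathrm{I}$'s sets increase, the partial sequences $\bigcup A^{(\gamma)}$ increase, and any height $\eta$ receives the same level whenever it is recomputed; hence the successive moves $\tau(A^{(\gamma)})$ form a $\le_\mathbb{P}$-descending chain of genuine conditions. Consequently the only thing that can go wrong is that at some limit turn $\Phi$ returns no admissible level, and the entire theorem reduces to the following: construct $\Phi$ so that for every coherent partial sequence $s$ (of the kind produced by $\Phi$ itself below $\eta$) with $\sup(\dom s)=\eta\notin\dom s$ and $\cf\eta=\omega$, the value $\Phi(s)=\mathcal{C}_\eta$ is a \emph{nonempty} family of at most $\lambda$ clubs in $\eta$ of order type $\le\kappa$ satisfying the coherence clause (\ref{cond3}) of Definition \ref{dfn:wsq} relative to $s$.

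I would define $\Phi$ by \emph{threading}. Fix in advance, for each $\eta<\kappa^+$ with $\cf\eta=\omega$, a cofinal sequence $c_\eta=\seq{\beta^\eta_n\mid n<\omega}$ in $\eta$, chosen so that the system $\seq{c_\eta}$ is coherent along single ladders (for consecutive cof-$\omega$ points $\beta^\eta_n<\beta^\eta_{n+1}$ of $c_\eta$ one has $\beta^\eta_n\in c_{\beta^\eta_{n+1}}$); such a system is produced by a routine recursion on $\eta$. Let $T_\eta$ be the tree of finite coherent approximations $\seq{E_0,\dots,E_k}$ with $E_i\in s(\beta^\eta_i)$, $\beta^\eta_{i-1}\in\mathrm{l.p.}(E_i)$ and $E_i\cap\beta^\eta_{i-1}=E_{i-1}$, and put into $\mathcal{C}_\eta$ every club $\bigcup_n E_n$ arising from a cofinal branch of $T_\eta$ of order type $\le\kappa$. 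A direct check shows each such union is a club in $\eta$, has order type $\le\kappa$, and satisfies (\ref{cond3}): at a limit point $\beta^\eta_n$ the intersection is $E_n\in s(\beta^\eta_n)$, while at any other limit point it agrees with some $E_n\cap\gamma$, which lies in $s(\gamma)$ by coherence of $s$. For the size bound, each level of $T_\eta$ has at most $2^{\aleph_0}$ nodes by induction, so $T_\eta$ has at most $(2^{\aleph_0})^{\aleph_0}=2^{\aleph_0}\le\lambda$ branches; this is the one place where the hypothesis $2^{\aleph_0}\le\lambda$ is used, together with $\lambda\le\kappa$ to keep order types admissible.

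The main obstacle is \emph{perpetual nonemptiness}: showing that $T_\eta$ always has a cofinal branch, so that Player $\mathrm{II}$ can keep moving through all $\omega_1$ turns. I would maintain, by recursion on $\eta$ simultaneously with the definition of $\Phi$, the extendibility invariant $(\star)$: for every $\beta=\beta^\eta_n\in c_\eta$ and every $E\in s(\beta)$ there is a $C\in\mathcal{C}_\eta$ with $\beta\in\mathrm{l.p.}(C)$ and $C\cap\beta=E$. Granting $(\star)$ at all earlier levels, one builds a cofinal branch of $T_\eta$ step by step: any node $\seq{E_0,\dots,E_n}$ extends, because the ladder-coherence $\beta^\eta_n\in c_{\beta^\eta_{n+1}}$ lets one apply $(\star)$ at $\beta^\eta_{n+1}$ to the club $E_n$ to obtain a legitimate $E_{n+1}$; in particular $T_\eta$ is nonempty and so is $\mathcal{C}_\eta$. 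Finally one verifies that including \emph{all} branches (rather than a single thread) is exactly what propagates $(\star)$ to level $\eta$, closing the induction. The delicate points to be nailed down are the construction of the coherent ladder system and the bookkeeping that keeps $(\star)$ and the order-type bound true simultaneously at limit levels; these, rather than the forcing-theoretic framework, are where the real work lies.
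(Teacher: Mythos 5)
There is a genuine gap, and it is exactly at the point you defer as ``the delicate points to be nailed down.'' Your design forces the new top level to depend only on $q^*\restrict\eta=\bigcup A\restrict\eta$, threaded along ladders $c_\eta$ fixed in advance. But the levels of $q^*$ at the ladder points $\beta^\eta_n$ are in general created by Player $\mathrm{I}$, not by your $\Phi$, and Player $\mathrm{I}$ can make them useless; your extendibility invariant $(\star)$ can only be maintained at levels $\Phi$ itself creates, never at levels the opponent creates, and the pre-fixed ladders inevitably run through the latter. Concretely: fix any $\eta^*$ of cofinality $\omega$, with your ladder $c_{\eta^*}=\seq{\beta^*_n\mid n<\omega}$ (we may assume its points are limit ordinals, else $T_{\eta^*}$ is ill-defined). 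At his $n$-th turn let Player $\mathrm{I}$ add to his previous set a condition $p_n$ of height $\beta^*_n$ extending your last move, all of whose new levels --- in particular the level at $\beta^*_n$ --- are singletons $\{c\}$ with $c$ a cofinal $\omega$-sequence of successor ordinals above $\beta^*_{n-1}$; such levels have no limit points below their ordinal, so they are vacuously coherent and $p_n$ is a legal condition, and the rules (a)--(c) of $G^*$ are respected. Since each $A_n$ has a strongest element, your tactic just replays it, and at turn $\omega$ you face $\eta=\eta^*$ with $q^*(\beta^*_n)=\{c_n\}$ and $\beta^*_{n-1}\notin\mathrm{l.p.}(c_n)$: the tree $T_{\eta^*}$ has no node $\seq{E_0,E_1}$ at all, hence no cofinal branch, so $\mathcal{C}_{\eta^*}=\emptyset$ and your move is illegal. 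This is precisely why the paper's tactic uses the \emph{set} $A$ rather than its union --- the whole point of the $*$-game: it forms the trace $a=\{\gamma\mid q\restrict(\gamma+1)\in A\}$ of heights of conditions Player $\mathrm{I}$ actually played, and takes as the new level the family of \emph{all} clubs of $\beta$ contained in $\mathrm{cl}(a)$ that cohere with $q$. Nonemptiness is then trivial (an $\omega$-sequence cofinal in $\beta$ chosen inside $a$ has no limit points below $\beta$, hence is vacuously coherent), and the size bound $2^{\aleph_0}\leq\lambda$ comes from $\mathrm{cl}(a)$ being countable --- no threading through opponent-controlled levels is ever needed.

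A second, independent gap: your reduction claims ``the only thing that can go wrong is that at some limit turn $\Phi$ returns no admissible level,'' but the winning condition of $G^*(\mathbb{P})$ is that Player $\mathrm{II}$ makes her $\omega_1$-th move, where the union of the run has domain of supremum $\beta^q$ of cofinality $\omega_1$. Your $\Phi$, built for cofinality-$\omega$ tops, does not apply there, and nonemptiness at countable limit turns does not by itself give a common extension at the end: one must exhibit a single club of $\beta^q$ of order type $\omega_1$ whose initial segment at \emph{every} limit point lies in the corresponding level, including the levels inserted at earlier limit turns. The paper's ``catch-all'' levels make this work: the final club is assembled from heights of conditions Player $\mathrm{I}$ actually played, its limit points below $\beta^q$ are exactly the limit-turn heights $\beta^q_\gamma$, and each $C\cap\beta^q_\gamma$ lands in $\mathcal{C}^q_{\beta^q_\gamma}$ by induction because that level contains all coherent clubs through the relevant countable closed set. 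With levels consisting only of branch-unions of a pre-fixed-ladder tree, this final step has no analogue. (A smaller slip in the same direction: the levels $s(\beta^\eta_i)$ occurring in your tree come from Player $\mathrm{I}$'s conditions and can have size $\lambda$, not $2^{\aleph_0}$, so $T_\eta$ can have up to $\lambda^{\aleph_0}$ branches, which may exceed $\lambda$; the paper's cardinality bound works only because its clubs are confined to a fixed countable set.) The repair is not bookkeeping: it is to let the tactic read off the trace of $A$, which is the paper's proof.
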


\proof We define a $*$-tactic $\tau:[\mathbb{P}_{\square_{\kappa, \lambda}}]^{\leq\aleph_0}_+\to\mathbb{P}$ as follows. Let $A\in[\mathbb{P}_{\square_{\kappa, \lambda}}]^{\leq\aleph_0}_+$ be arbitrary.

\begin{enumerate}[(1)]
\item If $A$ has a strongest condition $p$, if $p$ is of the form as in (\ref{eqn:pform}) of Definition \ref{dfn:pskl}, then we let
$$
\tau(A):=\concat{p}{\seq{\mathcal{C}^p_{\alpha^p+\omega}}}\ \text{(where $\mathcal{C}^p_{\alpha^p+\omega}:=\{\{\alpha^p+n\mid n<\omega\}\}$).}
$$
If $p=1_{\mathbb{P}_{\square_{\kappa, \lambda}}}$ we let $\tau(A):=\seq{\mathcal{C}^p_\omega}\ \text{(where $\mathcal{C}^p_\omega=\{\omega\}$)}$.

In both cases it is easy to check that $\tau(A)$ forms a $\mathbb{P}_{\square_{\kappa, \lambda}}$-condition which extends $p$.

\item\label{case2} If $A$ has no strongest condition, then $q:=\bigcup A$ is of the form
$$
q=\seq{\mathcal{C}^q_\alpha\mid\alpha\in\beta\cap\mathrm{Lim}} \quad\text{for some limit $\beta$}.
$$
Let $a=\{\gamma\in\beta\cap\mathrm{Lim}\mid q\restrict(\gamma+1)\in A\}$. Then $a$ is a countable subset of $\beta\cap\mathrm{Lim}$ which is unbounded in $\beta$. In this case we set $\tau(A):=\concat{q}{\seq{\mathcal{C}^q_{\beta}}}$, where
$$
\mathcal{C}^q_{\beta}:=\{C\subseteq_{\text{club}}\beta\cap\mathrm{cl}(a)\mid\forall\gamma\in\beta\cap\mathrm{l.p.}(C)[C\cap\gamma\in\mathcal{C}_\gamma]\}.
$$

Note that $|\mathcal{C}^q_{\beta}|\leq 2^{\aleph_0}$ holds since $\mathrm{cl}(a)$ is countable. On the other hand, $\mathcal{C}^q_{\beta}$ is nonempty since $\beta\cap\mathrm{cl}(a)$ has a subset of order type $\omega$ which is unbounded in $\beta$. Now it is easy to check that $\tau(A)$ forms a $\mathbb{P}_{\square_{\kappa, \lambda}}$-condition which extends all conditions in $A$.
\end{enumerate}

Let us show that $\tau$ is a winning $*$-tactic. Consider any play of the game $G^*(\mathbb{P}_{\square_{\kappa, \lambda}})$ where Player $\mathrm{II}$ plays according to $\tau$. Note that since $\mathbb{P}_{\square_{\kappa, \lambda}}$ is $\omega_1$-closed, the play never ends throughout the first $\omega_1$ turns of both players. Let us display the play as follows:
$$
\begin{matrix}
\mathrm{I}: & A_0  & A_1 & A_2 & \cdots & & A_{\omega+1} & \cdots\phantom{,}\\
\mathrm{II}: & \hspace{36pt} q_0 & \hspace{36pt} q_1 & \hspace{36pt} q_2 & \cdots & q_\omega & \hspace{36pt} q_{\omega+1} & \cdots.
\end{matrix}
$$
Let us also set $A_\gamma:=\bigcup\{A_\xi\mid\xi\in\gamma\setminus\mathrm{Lim}\}$ for each limit $\gamma<\omega_1$. Note that then $q_\gamma=\tau(A_\gamma)$ holds for every $\gamma<\omega_1$. It is enough to show that the $q_\gamma$'s have a common extension in $\mathbb{P}_{\square_{\kappa, \lambda}}$.
Since $\seq{q_\gamma\mid\gamma<\omega_1}$ is a descending sequence in $\mathbb{P}_{\square_{\kappa, \lambda}}$, $q:=\bigcup\{q_\gamma\mid\gamma<\omega_1\}$ is of the form 
$$
q=\seq{\mathcal{C}^q_\alpha\mid\alpha\in\beta^q\cap\mathrm{Lim}}\quad\text{for some limit $\beta^q<\kappa^+$,}
$$
and for each $\gamma<\omega_1$, $q_\gamma=q\restrict(\beta^q_\gamma+1)$ for some limit $\beta^q_\gamma<\beta^q$. By the definition of $\tau$, it is easy to observe that $\seq{\beta^q_\gamma\mid\gamma<\omega_1}$ is continuous, strongly increasing, and thus we have $\beta^q=\sup\{\beta^q_\gamma\mid\gamma<\omega_1\}$ and $\mathrm{cf}\beta^q=\omega_1$.

So it is enough to show that there exists a club subset $C$ of $\beta$ such that $\mathrm{o.t.}(C)=\omega_1$ and $C\cap\alpha\in\mathcal{C}^q_\alpha$ holds for each $\alpha\in\mathrm{l.p.}(C)$, because if it is the case, then
$$
\concat{q}{\seq{\mathcal{C}^q_{\beta^q}}}\ \text{(where $\mathcal{C}^q_{\beta^q}:=\{C\}$)}
$$ 
forms a $\mathbb{P}_{\square_{\kappa, \lambda}}$-condition which extends all $q_\gamma$'s. For each $\gamma<\omega_1$, pick a subset $a_\gamma$ of $(\beta^q_{\omega\gamma}, \beta^q_{\omega(\gamma+1)})\cap\mathrm{Lim}$ such that $a_\gamma$ is unbounded in $\beta^q_{\omega(\gamma+1)}$, $\mathrm{o.t.}(a_\gamma)=\omega$ and that $q\restrict(\xi+1)\in A_{\omega(\gamma+1)}$ for every $\xi\in a_\gamma$ (this is possible because $A_{\omega(\gamma+1)}$ has no strongest condition). Then let
$$
C:=\bigcup\{a_\gamma\mid\gamma<\omega_1\}\cup\{\beta^q_\gamma\mid\gamma\in\omega_1\cap\mathrm{Lim}\}.
$$
It is easy to see that $C$ is a club subset of $\beta^q$ of order type $\omega_1$, and that any limit point of $C$ other than $\beta^q$ is of the form $\beta^q_\gamma$ for some limit $\gamma<\omega_1$. 
It is also easy to see that, for each limit $\gamma<\omega_1$,
$$
C\cap\beta^q_\gamma=\bigcup\{a_\delta\mid\delta<\gamma\}\cup\{\beta^q_\delta\mid\delta\in\gamma\cap\mathrm{Lim}\}
$$
is unbounded in $\beta^q_\gamma$ and is included by $\mathrm{cl}(\{\xi\in\beta^q_\gamma\cap\mathrm{Lim}\mid q\restrict(\xi+1)\in A_\gamma\})$.
Now we have $C\cap\beta^q_\gamma\in\mathcal{C}^q_{\gamma}$ for every limit $\gamma<\omega_1$, by induction on $\gamma$.\qed{(Theorem \ref{thm:wsq})}

\bigskip
The following iteration lemma will be used in the next section.
\begin{lma}\normalfont\label{lma:iteration}
Suppose $\seq{\mathbb{P}_\alpha, \dot{\mathbb{Q}}_\beta\mid \alpha\leq\gamma, \beta<\gamma}$ is an iterated forcing construction such that:
\begin{enumerate}[(i)]
\item $\mathbb{P}_0$ is a trivial poset.
\item For each $\alpha<\gamma$ it holds that
$\force_{\mathbb{P}_\alpha}\text{\lq\lq$\dot{\mathbb{Q}}_\alpha$ is $*$-tactically closed.\rq\rq}$
\item For each limit ordinal $\xi\leq\gamma$, $\mathbb{P}_\xi$ is either the direct limit or the inverse limit of $\seq{\mathbb{P}_\zeta\mid\zeta<\xi}$, and the latter is the case whenever $\mathrm{cf}(\xi)\leq\omega_1$ holds.
\end{enumerate}
Then $\mathbb{P}_\gamma$ is $*$-tactically closed.
\end{lma}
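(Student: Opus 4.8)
The plan is to construct, by recursion on the coordinates, a single winning $*$-tactic $\tau$ for $G^*(\mathbb{P}_\gamma)$ out of (names for) winning $*$-tactics $\dot\tau_\alpha$ for the iterands $\dot{\mathbb{Q}}_\alpha$. The guiding idea is that a $*$-tactic, depending only on the accumulated set $A$ of conditions the opponent has played, projects cleanly to each coordinate: from $A$ one reads off, at coordinate $\alpha$, the set $\{p(\alpha)\mid p\in A\}$ of $\alpha$-th coordinates, and Player $\mathrm{II}$'s $\alpha$-th coordinate move should be $\dot\tau_\alpha$ applied to (a suitable expansion of) that set. Thus a play of $G^*(\mathbb{P}_\gamma)$ in which $\mathrm{II}$ follows $\tau$ ought to induce, at each coordinate $\alpha$, a play of $G^*(\dot{\mathbb{Q}}_\alpha)$ in which $\mathrm{II}$ follows $\dot\tau_\alpha$; since each $\dot\tau_\alpha$ is winning, each coordinate play yields a lower bound, and assembling these lower bounds produces the common extension witnessing that $\tau$ is winning.

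First I would normalize the $\dot\tau_\alpha$ so that they return the trivial condition on trivial input; this confines the support of $\tau(A)$ to the (countable) union of the supports of the members of $A$, so that $\tau(A)$ is a genuine condition of $\mathbb{P}_\gamma$, since at a direct-limit stage (necessarily of cofinality $>\omega_1$, hence $>\omega$) a countable union of bounded supports is bounded, while at an inverse-limit stage arbitrary supports are allowed. Next I would define $\tau(A)$, for $A\in[\mathbb{P}_\gamma]^{\leq\aleph_0}$ with a common extension, by recursion on $\alpha<\gamma$: having produced $q\restrict\alpha$, let $q(\alpha)$ be a $\mathbb{P}_\alpha$-name forced by $q\restrict\alpha$ to equal $\dot\tau_\alpha$ evaluated at the appropriate set of $\alpha$-th coordinates, maintaining throughout the invariant that $q\restrict\alpha$ is a common extension of $\{p\restrict\alpha\mid p\in A\}$. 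For the winning verification I would fix a play in which $\mathrm{II}$ follows $\tau$, check coordinate by coordinate that the induced sequence of moves is a legitimate $G^*(\dot{\mathbb{Q}}_\alpha)$-play, extract a lower bound $q(\alpha)$ from the winning-ness of $\dot\tau_\alpha$, and assemble $q=\seq{q(\alpha)\mid\alpha<\gamma}$; here the hypothesis that inverse limits are taken at every limit stage of cofinality $\le\omega_1$ is exactly what guarantees that this $q$, whose support may have size $\omega_1$, lies in $\mathbb{P}_\gamma$.

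The step I expect to be the main obstacle is the legitimacy of the induced coordinate plays. To apply $\dot\tau_\alpha$ at coordinate $\alpha$ one needs, at each stage, that the set of $\alpha$-th coordinates played so far has a common extension \emph{below $\mathrm{II}$'s running partial condition $q\restrict\alpha$}, and that these sets form a $\subseteq$-increasing sequence; only then is it a legal play of $G^*(\dot{\mathbb{Q}}_\alpha)$ to which winning-ness applies. The increasing-ness is inherited from that of the opponent's sets in the master play, and rule~(c) (the opponent's Boolean infimum must lie below $\mathrm{II}$'s previous move) transfers coordinate-wise and will be used to keep the partial conditions coherent and the relevant infima nonzero. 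The genuine difficulty is that a $*$-tactic is memoryless: it sees only the current accumulated set $A$, not the history of the play. Consequently, ensuring that $q\restrict\alpha$ stays below a common extension of $\{p\restrict\alpha\mid p\in A\}$ that actually \emph{witnesses} common extendibility at coordinate $\alpha$ is delicate, because an arbitrary common extension of the $\alpha$-coordinates below $q\restrict\alpha$ need not exist even though one does exist below the projection of a common extension of $A$. This must be arranged by a choice of the sets fed to the $\dot\tau_\alpha$ that is simultaneously (i) a function of $A$ alone, (ii) $\subseteq$-monotone, and (iii) common-extension-witnessing at every coordinate; reconciling these three demands is the crux, and I would expect most of the proof to be spent here. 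Once it is in place, the residual bookkeeping — the support computations and the assembly of the coordinate-wise lower bounds — is routine.
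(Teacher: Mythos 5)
Your outline coincides, step for step, with the paper's own proof: the paper also first passes to \emph{defensive} winning $*$-tactics (Sublemma \ref{sublma:defensivewinning}, which is exactly your normalization $\dot\tau_\alpha(\{1\})=1$), defines $(\tau(A))(\alpha)$ as a $\mathbb{P}_\alpha$-name forced to equal $\dot\tau_\alpha(\{p(\alpha)\mid p\in A\})$ whenever that set lies in $[\dot{\mathbb{Q}}_\alpha]^{\leq\aleph_0}_+$, handles supports at direct limits by precisely your cofinality argument, and proves winning-ness by constructing, at each limit turn, a common extension of the accumulated set $A$ by recursion on coordinates. So there is no divergence of approach. The problem is that your proposal stops exactly at the point you yourself declare to be the crux and supplies nothing there: you never commit to a choice of the sets fed to the $\dot\tau_\alpha$ in the verification, and you give no argument that the induced coordinate plays are legal plays of $G^*(\dot{\mathbb{Q}}_\alpha)$; you only list three desiderata and announce that reconciling them is where you expect most of the work to lie. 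A plan whose decisive step is an acknowledged blank is not a proof, so this is a genuine gap.

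The gap matters because the worry you raise is mathematically real and cannot be waved through. Rule (c) of $G^*$ (the requirement $\bigwedge A_{\xi+1}\leq_{\mathcal{B}(\mathbb{P}_\gamma)}b_\xi$) constrains a Boolean infimum computed in the \emph{whole} iteration, and that infimum can fall below Player $\mathrm{II}$'s previous move solely because of incompatibilities hidden in coordinates $\beta>\alpha$ (names $q(\beta)$ that become incompatible unless the coordinate-$\alpha$ generic meets a certain condition); in such a situation the coordinate-$\alpha$ infimum $\bigwedge\{q(\alpha)\mid q\in A_{\xi+1}\}$ need not lie below $(\tau(A_\xi))(\alpha)$, i.e.\ the coordinate-$\alpha$ trace of a perfectly legal master play can violate rule (c) of $G^*(\dot{\mathbb{Q}}_\alpha)$, and a winning $*$-tactic promises nothing about illegal plays. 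Likewise, the statement that $\{q(\alpha)\mid q\in A_\xi\}$ has a common extension is forced only below suitable conditions, and the partial condition $p\restrict\alpha$ produced by the recursion need not be such a condition if it is merely required to extend each $q\restrict\alpha$: with an unlucky (but, under that requirement, legitimate) choice of $p(\alpha)$, the recursion can arrive at a later coordinate $\beta$ at which no admissible $p(\beta)$ exists at all, so the coordinatewise assembly is not the ``routine bookkeeping'' your last sentence suggests. For comparison, the paper commits to the naive sets $\{p(\alpha)\mid p\in A\}$, imposes on the recursively chosen $p(\alpha)$ only its conditions (i)--(ii), and settles feasibility with the single sentence that ``it is possible by our definition of $\dot\tau_\alpha$ and $\tau$''; the substantive justification that you correctly sense is needed --- at a minimum, constraining the recursion so that $p\restrict\alpha$ also extends $b_\xi\restrict\alpha$ for Player $\mathrm{II}$'s earlier moves $b_\xi$ (harmless, since by rule (c) every common extension of $A_{\xi+1}$ already lies below $b_\xi$), so that below $p\restrict\alpha$ each set $\{q(\alpha)\mid q\in A_\xi\}$ is actually witnessed to have the common extension $b_\xi(\alpha)=\dot\tau_\alpha(\{q(\alpha)\mid q\in A_\xi\})$ --- is exactly the content your write-up leaves unsupplied.
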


For proof of Lemma \ref{lma:iteration}, we use a sublemma stated below.

\begin{dfn}\normalfont\label{dfn:defensive}
Let $\mathbb{P}$ be a poset. A $*$-tactic $\tau$ for $G^*(\mathbb{P})$ is said to be {\it defensive\/} if $\tau(\{1_{\mathbb{P}}\})=1_{\mathbb{P}}$ holds.
\end{dfn}

\begin{sublma}\normalfont\label{sublma:defensivewinning}
Every $*$-tactically closed poset has a defensive winning $*$-tactic.
\end{sublma}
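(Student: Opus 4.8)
The plan is to start from an arbitrary winning $*$-tactic $\sigma$ for $G^*(\mathbb{P})$ and to modify it only at the single input $\{1_{\mathbb{P}}\}$, leaving it untouched elsewhere. Concretely, I would define $\tau\colon[\mathbb{P}]^{\leq\aleph_0}_+\to\mathbb{P}$ by $\tau(\{1_{\mathbb{P}}\}):=1_{\mathbb{P}}$ and $\tau(A):=\sigma(A)$ for every $A\neq\{1_{\mathbb{P}}\}$. Then $\tau$ is defensive by fiat, so the entire content of the sublemma is to verify that $\tau$ is still winning.

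To do this I would analyze an arbitrary play of $G^*(\mathbb{P})$ in which Player $\mathrm{II}$ plays according to $\tau$, writing $\langle A_\gamma, b_\gamma\rangle$ for the moves (with $A_\gamma$ the cumulative union as in the definition). If Player $\mathrm{I}$ plays so that $A_\gamma=\{1_{\mathbb{P}}\}$ for every $\gamma$, then Player $\mathrm{II}$ answers $1_{\mathbb{P}}$ throughout and survives trivially. Otherwise let $\delta_0$ be least with $A_{\delta_0}\neq\{1_{\mathbb{P}}\}$. A quick observation is that $\delta_0$ cannot be a limit ordinal, since at a limit the cumulative union is the union of the earlier sets, all equal to $\{1_{\mathbb{P}}\}$; hence Player $\mathrm{I}$ genuinely moves at $\delta_0$ and introduces the first nontrivial condition, while for $\gamma<\delta_0$ one has $b_\gamma=1_{\mathbb{P}}$, which is legal and imposes no constraint via rule (c) on Player $\mathrm{I}$.

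The heart of the argument is then a \emph{time-shift}. I would define a new play $Q$ of $G^*(\mathbb{P})$ by relabelling turn $\gamma$ to turn $\delta_0+\gamma$, letting Player $\mathrm{I}$ play $A'_\gamma:=A_{\delta_0+\gamma}$ and Player $\mathrm{II}$ play $b'_\gamma:=b_{\delta_0+\gamma}$. Since $\delta_0$ is non-limit, the map $\gamma\mapsto\delta_0+\gamma$ preserves the limit/non-limit pattern of the turns, and since every $A_{\delta_0+\gamma}$ contains the nontrivial element present in $A_{\delta_0}$ we have $A'_\gamma\neq\{1_{\mathbb{P}}\}$, whence $b'_\gamma=\sigma(A'_\gamma)$; thus in $Q$ Player $\mathrm{II}$ plays according to $\sigma$. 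Using continuity of the cumulative unions at limits, and the fact that the original rule (c) at turn $\delta_0+\gamma$ reads $\bigwedge A_{\delta_0+\gamma+1}\leq_{\mathcal{B}(\mathbb{P})}b_{\delta_0+\gamma}$, one checks that $Q$ obeys all the game rules, so $Q$ is a legitimate $\sigma$-play. As $\sigma$ is winning, Player $\mathrm{II}$ survives all of $Q$; because $\omega_1$ is additively indecomposable we have $\{\delta_0+\gamma\mid\gamma\leq\omega_1\}=[\delta_0,\omega_1]$, so surviving $Q$ means precisely that the original moves $b_\gamma$ are legal for all $\gamma\in[\delta_0,\omega_1]$. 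Together with the trivial moves before $\delta_0$, this shows Player $\mathrm{II}$ survives the whole $\tau$-play, so $\tau$ is winning.

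The step I expect to be the main obstacle is justifying that the shifted play $Q$ really is a play following $\sigma$ rather than a superficial relabelling. The naive attempt would match the $\tau$-play against the $\sigma$-play with identical Player $\mathrm{I}$ moves, but this fails exactly because Player $\mathrm{II}$'s early answer $1_{\mathbb{P}}$ leaves Player $\mathrm{I}$ freer (via rule (c)) than $\sigma$ would have permitted, so no such direct matching exists. It is the additive indecomposability of $\omega_1$ that rescues the shift by guaranteeing that the relabelled play still has full length, and the continuity of the cumulative unions that guarantees the limit moves of $Q$ coincide with the corresponding tail moves of the original play.
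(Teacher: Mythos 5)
Your proposal is correct and follows essentially the same route as the paper: modify the given winning $*$-tactic only at the input $\{1_{\mathbb{P}}\}$, observe that Player $\mathrm{II}$ answers $1_{\mathbb{P}}$ harmlessly until Player $\mathrm{I}$ first plays something nontrivial, and then view the remainder as a fresh play according to the original tactic. Your time-shift argument (using that $\delta_0$ is non-limit, that the cumulative unions match after the shift, and that $\delta_0+\omega_1=\omega_1$) is just a careful formalization of the paper's one-line remark that ``the remaining game will develop as if the game starts from that turn.''
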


\begin{proof}
Suppose $\tau$ is a winning $*$-tactic for $G^*(\mathbb{P})$. Define $\tau'$ as follows.
$$
\begin{cases}
\tau'(\{1_\mathbb{P}\})&=1_\mathbb{P},\\
\tau'(A)&=\tau(A) \quad\text{if $A\in[\mathbb{P}]^{\leq\omega}_+\setminus\{\{1_\mathbb{P}\}\}$}.
\end{cases}
$$
Consider any play of $G^*(\mathbb{P})$ where Player $\mathrm{II}$ plays according to $\tau'$. As long as Player $\mathrm{I}$ keeps choosing $\{1_\mathbb{P}\}$  as his moves, Player $\mathrm{II}$ keeps choosing $1_\mathbb{P}$ as her moves. Once Player $\mathrm{I}$ chooses any other move, the remaining game will develop as if the game starts from that turn and Player $\mathrm{II}$ plays according to $\tau$. Therefore $\tau'$ is a winning $*$-tactic, which is also defensive.
\end{proof}

\proof[Proof of Lemma \ref{lma:iteration}] Let $D:=\{\alpha\leq\gamma\mid\text{$\mathbb{P}_\alpha$ is the direct limit of $\{\mathbb{P}_\zeta\mid\zeta<\alpha\}$}\}$. For each $\alpha<\gamma$, fix a $\mathbb{P}_\alpha$-name $\dot{\tau}_\alpha$   such that
$$
\force_{\mathbb{P}_\alpha}\text{\lq\lq$\dot{\tau}_\alpha$ is a defensive winning $*$-tactic for $G^*(\dot{\mathbb{Q}}_\alpha)$.\rq\rq}
$$
Now we define $\tau$ as follows. For each $A\in[\mathbb{P}_\gamma]^{\leq\omega}_+$ and $\alpha<\gamma$, let $(\tau(A))(\alpha)$ be a $\mathbb{P}_\alpha$-name satisfying
$$
\force_{\mathbb{P}_\alpha}\text{\lq\lq$(\tau(A))(\alpha)\in\dot{\mathbb{Q}}_\alpha$\rq\rq}
$$
and
$$
\force_{\mathbb{P}_\alpha}\text{\lq\lq$\{p(\alpha)\mid \check{p}\in \check{A}\}\in[\dot{\mathbb{Q}}_\alpha]^{\leq\omega}_+\imply(\tau(A))(\alpha)=\dot{\tau}_\alpha(\{p(\alpha)\mid \check{p}\in \check{A}\})$.\rq\rq}
$$
Note that $(\tau(A))(\alpha)=1_{\dot{\mathbb{Q}}_\alpha}$ holds whenever $p(\alpha)=1_{\dot{\mathbb{Q}}_\alpha}$ for all $p\in A$ and therefore the support of $\tau(A)$ is equal to the union of supports of members of $A$. This assures that $\tau(A)\in\mathbb{P}_\gamma$, since for each $\alpha\in D$, $\mathrm{cf}(\alpha)>\omega_1$ holds and thus the support of $\tau(A)$ is bounded below $\alpha$. Consider any play of $G^*(\mathbb{P}_\gamma)$ where Player $\mathrm{II}$ plays according to $\tau$. At each limit turn, suppose $A$ denotes the set of conditions chosen by Player $\mathrm{I}$ so far. We can construct a common extension $p\in\mathbb{P}_\gamma$ of $A$ as follows. Suppose $p\restrict\alpha\in\mathbb{P}_\alpha$ is already defined so that it extends $q\restrict\alpha$ for all $q\in A$. Then let $p(\alpha)$ be a $\mathbb{P}_\alpha$-name for a $\dot{\mathbb{Q}}_\alpha$-condition satisfying
\begin{enumerate}[(i)]
\item $p\restrict\alpha\force_{\mathbb{P}_\alpha}\text{\lq\lq$p(\alpha)\leq_{\dot{\mathbb{Q}}_\alpha}q(\alpha)$\rq\rq}$ for all $q\in A$, and
\item $p(\alpha)=1_{\dot{\mathbb{Q}}_\alpha}$, if $q(\alpha)=1_{\dot{\mathbb{Q}}_\alpha}$ holds for all $q\in A$.
\end{enumerate}
It is possible by our definition of $\dot{\tau}_\alpha$ and $\tau$, and we have that $p\restrict\alpha+1\in\mathbb{P}_{\alpha+1}$ extends $q\restrict\alpha+1$ for all $q\in A$. This construction assures that $p\restrict\alpha\in\mathbb{P}_\alpha$ for $\alpha\in D$, because $|A|\leq\omega_1$ holds (note that $|A|=\omega_1$ can be the case at the $\omega_1$-th turn of Player $\mathrm{II}$) and the support of $p\restrict\alpha$ is equal to the union of supports of $q\restrict\alpha$ for $q\in A$ and thus is bounded since $\mathrm{cf}(\alpha)>\omega_1$. Thus the play is never stopped on the way of the game, and therefore we have that $\tau$ is a winning $*$-tactic.\qed

We conjecture that the analogue of Lemma \ref{lma:iteration} for $*$-operationally closed posets is also valid, but at present we have no proof, because so far we do not know if the analogue of Sublemma \ref{sublma:defensivewinning} for $*$-operations is correct.


\section{Preservation of $\mathrm{PFA}$}\label{sec:prePFA}
In this section we prove the following theorem.
\begin{thm}\normalfont\label{thm:PFApreserved}
$\mathrm{PFA}$ is preserved under any $*$-operationally closed forcing.
\end{thm}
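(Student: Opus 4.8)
The plan is to carry out the whole argument in $V$, where $\mathrm{PFA}$ holds, and to fix once and for all a winning $*$-operation $\sigma$ for $G^*(\mathbb{P})$. Since a winning $*$-operation is in particular a winning strategy for a game equivalent to $G_{\omega_1+1}(\mathbb{P})$, the poset $\mathbb{P}$ is $(\omega_1+1)$-strategically closed, hence $\omega$-distributive: it adds no new $\omega$-sequences and preserves $\omega_1$, which I will use freely. I then reduce the theorem to the following statement in $V$: for every $\mathbb{P}$-name $\dot{\mathbb{Q}}$ for a proper poset, every sequence $\seq{\dot D_\xi\mid\xi<\omega_1}$ of names for dense subsets of $\dot{\mathbb{Q}}$, and every $p\in\mathbb{P}$, there are $q\leq_\mathbb{P}p$ and a $\mathbb{P}$-name $\dot H$ with $q\force$``$\dot H$ is a filter on $\dot{\mathbb{Q}}$ meeting every $\dot D_\xi$''. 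Once this is proved, the set of such $q$ is dense below any condition, so every $\mathbb{P}$-generic extension satisfies the relevant instance of $\mathrm{PFA}$; as $\dot{\mathbb{Q}}$ and $\seq{\dot D_\xi}$ were arbitrary, $\mathrm{PFA}$ holds in $V[G]$.

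To prove the reduced statement I would fix a large regular $\theta$ and apply $\mathrm{PFA}$ in $V$ to a proper poset $\mathbb{S}$ of finite approximations whose generic filter produces a single object: a continuous $\in$-increasing chain $\seq{M_\xi\mid\xi<\omega_1}$ of countable elementary submodels of $\seq{H_\theta,\in}$ with $\mathbb{P},\dot{\mathbb{Q}},\seq{\dot D_\xi},\sigma,p\in M_0$ and $\seq{M_\eta\mid\eta\leq\xi}\in M_{\xi+1}$; for each $\xi$ an $(M_\xi,\mathbb{P})$-generic descending sequence of conditions lying in $M_{\xi+1}$ and coherent across stages, whose accumulated members up to stage $\xi$ form a set $A_\xi\in M_{\xi+1}$; and a $\leq$-descending sequence $\seq{(b_\xi,\dot r_\xi)\mid\xi<\omega_1}$ in $\mathbb{P}*\dot{\mathbb{Q}}$ in which $b_\xi=\sigma(\xi,A_\xi)$ and $b_{\xi+1}\force$``$\dot r_{\xi+1}\in\dot D_\xi$''. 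The construction is arranged so that $\seq{A_\xi,b_\xi\mid\xi<\omega_1}$ is a legal play of $G^*(\mathbb{P})$ below $p$ in which Player $\mathrm{II}$ plays according to $\sigma$; legality of Player $\mathrm{I}$'s moves holds because each $A_\xi$ is a countable increasing family with a common extension, obtained by amalgamating the $(M_\eta,\mathbb{P})$-generic blocks via Lemma \ref{lma:gseqstrcl}. The $\omega_1$ dense subsets of $\mathbb{S}$ asserting $\xi\in\dom$ let $\mathrm{PFA}$ deliver the full length-$\omega_1$ object.

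With this object in hand I would invoke that $\sigma$ is winning: Player $\mathrm{II}$ reaches her $\omega_1$-th move, so by the game rules there is $b_{\omega_1}\leq_\mathbb{P}b_\xi$ for all $\xi<\omega_1$; set $q:=b_{\omega_1}\leq_\mathbb{P}p$. Because $A_\xi$ contains an $(M_\eta,\mathbb{P})$-generic sequence for every $\eta<\xi$, Lemma \ref{lma:strgen} shows that $q$ is $(M_\eta,\mathbb{P})$-strongly generic for every $\eta$, so $q$ forces each $M_\eta[G]$ to be an elementary submodel of $\seq{H_\theta^{V[G]},\in}$ and $\seq{M_\eta[G]\mid\eta<\omega_1}$ to be a continuous chain. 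Letting $\dot H$ name the filter generated by $\{\dot r_\xi\mid\xi<\omega_1\}$, the facts $q\leq_\mathbb{P}b_{\xi+1}$ and $(b_{\xi+1},\dot r_{\xi+1})\leq(b_\xi,\dot r_\xi)$ give $q\force$``$\dot r_{\xi+1}\leq_{\dot{\mathbb{Q}}}\dot r_\xi$'' and $q\force$``$\dot r_{\xi+1}\in\dot D_\xi$'', so $q$ forces $\dot H$ to be a filter meeting every $\dot D_\xi$, as required.

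I expect the main obstacle to lie in establishing that $\mathbb{S}$ is proper, and more precisely in the simultaneous-extension lemma underlying both the properness of $\mathbb{S}$ and the inductive construction of $\seq{(b_\xi,\dot r_\xi)}$: given the next model in the chain and a strongly generic $b_\xi$ on the $\mathbb{P}$-side together with a name $\dot r_\xi$ on the $\dot{\mathbb{Q}}$-side, one must produce a single extension that is generic over that model on both coordinates and meets the prescribed dense set. This is exactly where the properness of $\dot{\mathbb{Q}}$ is genuinely used: one has to show that a strongly generic $b_\xi$ decides enough about $\dot{\mathbb{Q}}$ and about $M_{\xi+1}[G]$ for the properness of $\dot{\mathbb{Q}}$ to furnish $\dot r_{\xi+1}\in\dot D_\xi$ while keeping $(b_{\xi+1},\dot r_{\xi+1})$ generic over $M_{\xi+1}$. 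Making the interplay between $\mathbb{P}$-side strong genericity (governed by $\sigma$) and $\dot{\mathbb{Q}}$-side properness precise, and checking that it is stable within the finite-approximation framework, is the technical heart of the proof.
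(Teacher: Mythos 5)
Your reduction and your final step (reading off $q:=b_{\omega_1}$ from a completed play of $G^*(\mathbb{P})$ and letting $\dot H$ be generated by the $\dot r_\xi$'s) are fine, but the object you ask $\mathrm{PFA}$ to produce cannot exist in general, so the difficulty you defer at the end ("establishing that $\mathbb{S}$ is proper") is not a technical gap but an impossibility. The problem is the requirement that $\seq{(b_\xi,\dot r_\xi)\mid\xi<\omega_1}$ be $\leq$-\emph{descending} in $\mathbb{P}*\dot{\mathbb{Q}}$. At a countable limit stage $\xi$ this forces $b_\xi\force_{\mathbb{P}}$``$\dot r_\xi$ is a lower bound of $\{\dot r_\eta\mid\eta<\xi\}$'', i.e.\ it requires countable descending sequences in $\dot{\mathbb{Q}}$ to have lower bounds. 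A proper poset need not be countably closed: take $\dot{\mathbb{Q}}$ a name for Cohen forcing and $\dot D_n$ the dense set of conditions of length at least $n$. Then any $b_\omega$ extending all $b_{n+1}$ forces the lengths of the $\dot r_n$ to be unbounded, so it forces that $\{\dot r_n\mid n<\omega\}$ has no lower bound in $\dot{\mathbb{Q}}$; hence no pair $(b_\omega,\dot r_\omega)$ continuing the sequence exists. Since the total object provably does not exist (already with $\mathbb{P}$ trivial), no finite-approximation poset $\mathbb{S}$ can simultaneously be proper and have the sets ``$\xi\in\dom$'' dense, and no amount of care about the interplay between strong genericity and properness will repair this.

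The paper's proof is structured precisely to avoid threading an $\omega_1$-chain through $\dot{\mathbb{Q}}$. It applies $\mathrm{PFA}$ to the three-step iteration $\mathbb{S}=\mathbb{P}*\dot{\mathbb{Q}}*\dot{\mathbb{R}}$, where $\dot{\mathbb{R}}$ names the poset of countable initial segments $\seq{A_\xi\mid\xi\leq\alpha}$ of a play of $G^*(\mathbb{P})$ in which Player $\mathrm{II}$ follows $\sigma$ and whose current response $\sigma(\alpha,A_\alpha)$ lies in the $\mathbb{P}$-generic filter. The filter on $\mathbb{S}$ given by $\mathrm{PFA}$ is only \emph{directed}, and correspondingly its $\dot{\mathbb{Q}}$-coordinates $\dot q_\gamma$ are merely forced to form a directed subset of $\dot{\mathbb{Q}}$ with $\dot q_\gamma\in\dot D_\gamma$ --- directedness, unlike descent, survives countable limits for free, and a directed set still generates a filter. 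The entire $\omega_1$-length chain structure lives in $\mathbb{P}$ and $\dot{\mathbb{R}}$: the third coordinates glue to a play of $G^*(\mathbb{P})$ following $\sigma$, whose responses $r_\gamma=\sigma(\gamma,A_\gamma)$ include all the $p_\gamma$'s, and the winning property of $\sigma$ yields the common extension $p$. Properness of $\dot{\mathbb{Q}}$ enters only once, inside the properness proof of $\mathbb{S}$: after capping the $\mathbb{P}$-part by $p'=\sigma(\delta,A)$ with $A=N\cap G$, one chooses an $(N[G],\dot{\mathbb{Q}}_G)$-generic $q'\leq\dot q_G$, and then caps the $\dot{\mathbb{R}}$-part by appending $A$ itself. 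Your idea of building the play of $G^*(\mathbb{P})$ from a chain of models $M_\xi$ and $(M_\xi,\mathbb{P})$-generic sequences is close in spirit to that properness argument, but to make the scheme work you must keep the $\dot{\mathbb{Q}}$-side directed-only and let the auxiliary third iterand, not $\dot{\mathbb{Q}}$, carry the length-$\omega_1$ thread.
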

Note that Theorem \ref{thm:PFApreserved} generalizes \cite[Theorem 10]{yoshinobu13:_oper} which claims that $\mathrm{PFA}$ is preserved under any operationally closed forcing, and since the basic structure of our proof is the same as the one given there, we will expose our proof somewhat briefly, rather focusing on differences from the older proof.

\proof
Suppose $\mathrm{PFA}$ holds in $V$. Let $\mathbb{P}$ be any $*$-operationally closed poset, and $\sigma$ a winning $*$-operation for $G^*(\mathbb{P})$. Without loss of generality, we may assume that $\sigma(\gamma, A)\leq_{\mathcal{B}(\mathbb{P})}\bigwedge A$ holds for every $\gamma<\omega_1$ and $A\in[\mathbb{P}]^{\leq\aleph_0}_+$. We will show that $\mathrm{PFA}$ remains true in $V^\mathbb{P}$. Let $\dot{\mathbb{Q}}$ be any $\mathbb{P}$-name for a proper poset,
and $\seq{\dot{D}_\xi\mid\xi<\omega_1}$ $\mathbb{P}$-names for a dense subset of $\dot{\mathbb{Q}}$.
It is enough to show that there exists a $\mathbb{P}$-name $\dot{F}$ such that
\begin{equation*}
\force_{\mathbb{P}}\text{\lq\lq$\dot{F}$ is a filter on $\dot{\mathbb{Q}}$ and $\dot{F}\cap\dot{D}_\xi\not=\emptyset$ for every $\xi<\omega_1$.\rq\rq}
\end{equation*}

For any $\mathbb{P}$-generic filter $G$ over $V$ and any $\dot{\mathbb{Q}}_G$-generic filter $H$ over $V[G]$, we define a poset $\mathbb{R}=\mathbb{R}_{G, H}$ within $V[G][H]$ as follows:
A condition of $\mathbb{R}$ is either the empty sequence $1_{\mathbb{R}}=\seq{}$, or of the form $P=\seq{A^P_\xi\mid\xi\leq\alpha^P}$ ($\alpha^P<\omega_1$) such that
\begin{enumerate}[(a)]
\item $P$ is a $\subseteq$-continuous increasing sequence of elements of $[\mathbb{P}]^{\leq\aleph_0}_+\cap V$.
\item $\sigma(\xi, A^P_\xi)\geq_{\mathcal{B}(\mathbb{P})}\bigwedge A^P_{\xi+1}$ for every $\xi<\alpha^P$.
\item $\sigma(\alpha^P, A^P_{\alpha^P})\in G$.
\end{enumerate}
For this $P$ we write $\alpha^P$ and $A^P_{\alpha^P}$ as $l(P)$ and $A^P$ respectively.

$\mathbb{R}$ is ordered by initial segment. Let $\dot{\mathbb{R}}$ denote the canonical $(\mathbb{P}*\dot{\mathbb{Q}})$-name representing $\mathbb{R}_{G, H}$.

\medskip\noindent
\underline{Claim} $\mathbb{S}:=\mathbb{P}*\dot{\mathbb{Q}}*\dot{\mathbb{R}}$ is proper.

\proof[Proof of Claim]
Let $\theta$ be a regular cardinal such that $\mathbb{P}$, $\dot{\mathbb{Q}}$, $\dot{\mathbb{R}}$, $\mathbb{S}\in H_\theta$ and $N$ an arbitrary countable elementary submodel of $\seq{H_\theta, \in, \{\mathbb{P}, \dot{\mathbb{Q}}, \dot{\mathbb{R}}, \mathbb{S}\}}$. Set $\delta:=N\cap\omega_1$. Let $\seq{p, \dot{q}, \dot{P}}\in{\mathbb{S}}\cap N$ be arbitrary. It is enough to show that there exists an $(N, \mathbb{P})$-generic $p'\leq_{\mathbb{P}}p$ such that whenever $G$ is a $\mathbb{P}$-generic filter over $V$ with $p'\in G$, there exists an $(N[G], \dot{\mathbb{Q}}_G)$-generic $q'\leq_{\dot{\mathbb{Q}}_G}\dot{q}_G$ satisfying the following: Whenever $H$ is a $\dot{\mathbb{Q}}_G$-generic filter over $V[G]$ with $q'\in H$, there exists an $(N[G][H], \dot{\mathbb{R}}_{G*H})$-generic $P'\leq_{\dot{\mathbb{R}}_{G*H}}\dot{P}_{G*H}$.

First pick an $(N, \mathbb{P})$-generic sequence $\seq{p_n\mid n<\omega}$. By Lemma \ref{lma:gseqstrcl}, $p_n$'s have a common extension in $\mathbb{P}$. Set $A:=\{s\in N\cap\mathbb{P}\mid\exists n<\omega[s\geq_{\mathbb{P}}p_n]\}$. Then $A\in[\mathbb{P}]^{\leq\aleph_0}_+$. Set $p':=\sigma(\delta, A)$.

Suppose $G$ is any $\mathbb{P}$-generic filter over $V$ with $p'\in G$.  Since $p'$ extends $\bigwedge A$ and thus all $p_n$'s, by the $(N, \mathbb{P})$-genericity of $\seq{p_n\mid n<\omega}$ it is easy to see that
\begin{equation}\label{eqn:AisNcapG}
A=N\cap G.
\end{equation}
Since $\dot{q}_G\in N[G]$ and $\dot{\mathbb{Q}}_G$ is proper in $V[G]$, we can pick an $(N[G], \dot{\mathbb{Q}}_G)$-generic $q'\leq_{\dot{\mathbb{Q}}_G}\dot{q}_G$. Now suppose $H$ is any $\dot{\mathbb{Q}}_G$-generic filter over $V[G]$ with $q'\in H$. Since $p'\in G$ is $(N, \mathbb{P})$-generic and $q'\in H$ is $(N[G], \dot{\mathbb{Q}}_G)$-generic, we have $N[G][H]\cap V=N$ and in particular $N[G][H]\cap\omega_1=\delta$. Since $\dot{P}_{G*H}\in N[G][H]$, it is possible to pick an $(N[G][H], \dot{\mathbb{R}}_{G*H})$-generic sequence $\seq{P_n\mid n<\omega}$ with $P_0=\dot{P}_{G*H}$. Set $\tilde{P}:=\bigcup\{P_n\mid n<\omega\}$. Again by an easy density argument we have that $\tilde{P}$ is of the form $\seq{A_\gamma\mid\gamma<\delta}$.

\medskip\noindent
\underline{Subclaim} $\bigcup\{A_\gamma\mid\gamma<\delta\}=A$.

\proof[Proof of Subclaim]
Since $P_n\in N[G][H]$ for each $n<\omega$ and $\delta\subseteq N[G][H]$, we have $A_\gamma\in N[G][H]$ for each $\gamma<\delta$. Note that, for each $\gamma<\delta$, $A_\gamma$ is countable and is in $V$, and thus is contained in $N$ since $N[G][H]\cap V=N$ holds. Moreover $A_\gamma\subseteq G$ also holds for each $\gamma<\delta$ by the definition of $\dot{\mathbb{R}}$. Thus by (\ref{eqn:AisNcapG}) we have $\bigcup\{A_\gamma\mid\gamma<\delta\}\subseteq A$. For the inclusion of the other direction, by (\ref{eqn:AisNcapG}) and the $(N[G][H], \dot{\mathbb{R}}_{G*H})$-genericity of the sequence $\seq{P_n\mid n<\omega}$, it is enough to show the following density lemma:
\begin{lma}\normalfont\label{lma:dadense}
In $V[G][H]$, $D_a=\{R\in\dot{\mathbb{R}}_{G*H}\mid a\in A^R\}$ is dense in $\dot{\mathbb{R}}_{G*H}$ for each $a\in G$.
\end{lma}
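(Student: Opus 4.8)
The plan is to prove the density by moving the computation down to the ground model and then invoking genericity of $G$. First observe that the definition of $\mathbb{R}_{G,H}$ refers to the generic only through clause (c), i.e.\ only through $G$, so it suffices to argue in $V[G]$. Fix $a\in G$ and a condition $R\in\dot{\mathbb{R}}_{G*H}$; I may assume $R\neq 1_{\mathbb{R}}$, the case $R=1_{\mathbb{R}}$ being an entirely analogous one-step extension with empty domain below it. Write $\beta:=l(R)$, $C:=A^R$ and $b:=\sigma(\beta,C)$, so that $b\in G$ by clause (c) and $C\in[\mathbb{P}]^{\leq\aleph_0}_+\cap V$. Since $a,b\in G$ and $G$ is a filter, fix $t\in G$ with $t\leq_\mathbb{P}b$ and $t\leq_\mathbb{P}a$.

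The key step is the following density fact, which I would prove in $V$ from the fixed ground-model data $\beta$, $C$, $a$: the set
$$
E:=\{\sigma(\beta+1,C\cup\{a,s\})\mid s\in\mathbb{P},\ s\leq_\mathbb{P}b\text{ and }s\leq_\mathbb{P}a\}
$$
is dense below $t$ in $\mathbb{P}$. Indeed, given any $s_0\leq_\mathbb{P}t$, we have $s_0\leq_\mathbb{P}b\leq_{\mathcal{B}(\mathbb{P})}\bigwedge C$ (using the standing normalization $\sigma(\gamma,A)\leq_{\mathcal{B}(\mathbb{P})}\bigwedge A$) and $s_0\leq_\mathbb{P}a$, so $s_0$ is a common extension of $B_0:=C\cup\{a,s_0\}$. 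Hence $B_0\in[\mathbb{P}]^{\leq\aleph_0}_+\cap V$ with $\bigwedge B_0=s_0$, and by the same normalization $\sigma(\beta+1,B_0)\leq_{\mathcal{B}(\mathbb{P})}\bigwedge B_0=s_0$. This element of $E$ lies below $s_0$, witnessing density below $t$.

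Since $E\in V$ is dense below $t\in G$, genericity of $G$ provides some $s$ with $s\leq_\mathbb{P}b$, $s\leq_\mathbb{P}a$ and $\sigma(\beta+1,C\cup\{a,s\})\in G$. I then set $B:=C\cup\{a,s\}$ and $R':=\concat{R}{\seq{B}}$, the one-step extension of $R$ whose new top set is $B$. Clearly $a\in A^{R'}=B$, so $R'\in D_a$, and $R'\leq_{\mathbb{R}}R$. It remains only to check $R'\in\dot{\mathbb{R}}_{G*H}$: clause (a) holds since $C\subseteq B$ and continuity is unaffected at a successor index; clause (b) at the new index follows from $\bigwedge B=s\leq_\mathbb{P}b=\sigma(\beta,C)$, while all earlier instances are inherited from $R$; and clause (c) is exactly the statement $\sigma(\beta+1,B)\in G$ that we arranged.

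The only real subtlety is recognizing that, once we are inside $V[G][H]$ with a \emph{fixed} condition $R$, the data $C=A^R$, the ordinal $\beta$, and the element $a$ are all members of $V$, so that $E$ can be formed and shown dense entirely within $V$; the generic clause (c) is then met not by construction but by a genericity argument over $\mathbb{P}$. The mechanism driving the density of $E$---appending both $a$ itself and a witness $s\leq_\mathbb{P}b,a$ to $C$, which by the normalization forces $\sigma$'s response below $s$---is precisely what allows an arbitrary $a\in G$ to be absorbed into the top set while keeping $\sigma$'s value inside $G$.
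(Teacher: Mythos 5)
Your proof is correct and is essentially the paper's own argument: both form, in $V$, the set of values $\sigma(l(R)+1, A^R\cup\{a,s\})$ over common extensions $s$ of $a$ and $\sigma(l(R),A^R)$, show it is dense below a common extension of these two elements of $G$ using the normalization $\sigma(\gamma,A)\leq_{\mathcal{B}(\mathbb{P})}\bigwedge A$, and then use genericity of $G$ to append $A^R\cup\{a,s\}$ as a new top entry of $R$. Your only additions are the explicit verification of clauses (a)--(c) for $R'$ and the remark about $R=1_{\mathbb{R}}$, which the paper leaves implicit.
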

\proof[Proof of Lemma \ref{lma:dadense}]
Suppose $R\in\dot{\mathbb{R}}_{G*H}$ and $a\in G$. By the definition of $\dot{\mathbb{R}}_{G*H}$ it holds that $\sigma(l(R), A^R)\in G$. Therefore $a$ and $\sigma(l(R), A^R)$ are compatible, and since $\sigma(l(R)+1, A^R\cup\{a, b\})\leq_{\mathbb{P}}b$ holds for each common extension $b$ of $a$ and $\sigma(l(R), A^R)$ we have
$$
\{\sigma(l(R)+1, A^P\cup\{a, b\})\mid b\leq_{\mathbb{P}}a\ \land\ b\leq_{\mathbb{P}}\sigma(l(R), A^R)\}
$$
is dense below the boolean meet of $a$ and $\sigma(l(R), A^R)$. But this set is defined in $V$, and thus by the genericity of $G$ there exists some common extension $b$ of $a$ and $\sigma(l(R), A^R)$ such that $\sigma(l(R)+1, A^R\cup\{a, b\})\in G$. Therefore we have $R':=\concat{R}{(A^R\cup\{a, b\})}\in\dot{\mathbb{R}}_{G*H}$ and thus $R'\in D_a$. \qed(Lemma \ref{lma:dadense})

\qed(Subclaim)

Since $\sigma(\delta, A)=p'$ belongs to $G$, by letting $A_\delta:=A$ and $P':=\seq{A_\gamma\mid\gamma\leq\delta}$ we have that $P'$ is an $\dot{\mathbb{R}}_{G*H}$-condition which extends all $P_n$'s, and thus is an $(N[G][H], \dot{\mathbb{R}}_{G*H})$-generic condition. This finishes the proof of our claim.\qed(Claim)

Note that, by the same density argument as above, for any ${\mathbb{S}}$-generic filter $G*H*I$ over $V$, $\bigcup I$ is an $\omega_1$-sequence of elements of $[\mathbb{P}]^{\leq\aleph_0}_+\cap V$. For each $\gamma<\omega_1$ let $\dot{A}_\gamma$ be the canonical ${\mathbb{S}}$-name representing the $\gamma$-th entry of this sequence.

In $V$, by applying $\mathrm{PFA}$ to ${\mathbb{S}}$ with a sufficiently rich family of $\aleph_1$-many dense subsets of ${\mathbb{S}}$ we can find a directed subset $\{s_\gamma=\seq{p_\gamma, \dot{q}_\gamma, \dot{P}_\gamma}\mid\gamma<\omega_1\}$ of ${\mathbb{S}}$, $\{A_\gamma\mid\gamma<\omega_1\}\subseteq[\mathbb{P}]^{\leq\aleph_0}_+$ and $\{\xi_\gamma\mid\gamma<\omega_1\}\subseteq\omega_1$ such that for each $\gamma<\omega_1$ it holds that
\begin{equation}
  s_\gamma\force_{{\mathbb{S}}}\text{\lq\lq$\dot{A}_\gamma=\check{A}_\gamma\land l(\dot{P}_\gamma)=\check{\xi_\gamma}\land\sigma(\check{\xi_\gamma}, A^{\dot{P}_\gamma})=\check{p_\gamma}$\rq\rq}\label{sforces}
\end{equation}
and
\begin{equation}
  p_\gamma\force_{\mathbb{P}}\text{\lq\lq$\dot{q}_\gamma\in\dot{D}_\gamma$.\rq\rq}\label{pforces}
\end{equation}
By the directedness and genericity of the set $\{s_\gamma\mid\gamma<\omega_1\}$ together with (\ref{sforces}) we have that
$$
\begin{matrix}
\mathrm{I}: & A_0 & A_1 & A_2 & \cdots & & A_{\omega+1} & \cdots\\
\mathrm{II}: & \hspace{36pt} r_0 & \hspace{36pt} r_1 & \hspace{36pt} r_2 & \cdots & r_\omega & \hspace{36pt} r_{\omega+1} & \cdots
\end{matrix}
$$
(where $r_\gamma$ denotes $\sigma(\gamma, A_\gamma)$ for each $\gamma<\omega_1$) forms a play of $G^*(\mathbb{P})$ where Player $\mathrm{II}$ plays according to $\sigma$.
To prove this, the only nontrivial part is the $\subseteq$-continuity of $\seq{A_\gamma\mid\gamma<\omega_1}$. This can be worked out as follows. Work in $V$. For each element of $[\mathbb{P}]^{\leq\aleph_0}_+$ fix its enumeration (allowing overlaps) of order type $\omega$. For each limit $\gamma<\omega_1$ and $n<\omega$ let $D_{\gamma, n}$ be the dense subset of $\mathbb{S}$ consisting of the conditions which decide the least ordinal $\xi<\gamma$ such that the $n$-th element of $\dot{A}_\gamma$ belongs to $\dot{A}_\xi$. Then we may put these dense subsets in the family to which $\mathrm{PFA}$ is applied. This assures that for each limit $\gamma<\omega_1$ every element of $A_\gamma$ belongs to $A_\xi$ for some $\xi<\gamma$.

We also have that, for each $\gamma<\omega_1$ there exists some $\xi_\gamma<\omega_1$ such that $p_\gamma=r_{\xi_\gamma}$ holds. Therefore $p_\gamma$'s have a common extension $p$ in $\mathbb{P}$. By (\ref{pforces}) and the directness of $s_\gamma$'s we have:
$$
p\force_{\mathbb{P}}\text{\lq\lq$\{\dot{q}_\gamma\mid\gamma<\omega_1\}$ is directed $\land\forall\gamma<\omega_1[\dot{q}_\gamma\in\dot{D}_\gamma]$.\rq\rq}
$$
Note that we can pick such $p$ below any given condition of $\mathbb{P}$. This suffices for our conclusion.
\qed(Theorem \ref{thm:PFApreserved})

As an immediate corollary of Theorem \ref{thm:PFApreserved} together with Theorem \ref{thm:wsq}, we can reproduce a proof of the following well-known theorem first proved by Magidor \cite{magidor:_jerusalem}\footnote{As for a written proof of this theorem, it is announced in a recent paper by Cummings and Magidor \cite{cummings:_martinsmaximum} which argues the weak square principles derived from the Martin's Maximum, that it will be dealt with in a further publication by Magidor.}.

\begin{thm}[Magidor]\normalfont
The statement that $\square_{\kappa, \omega_2}$ holds for every cardinal $\kappa\geq\omega_2$ is relatively consistent with $\mathrm{ZFC+PFA}$.
\end{thm}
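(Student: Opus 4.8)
The plan is to start from a model $V$ of $\mathrm{PFA}$, whose consistency we take for granted relative to a supercompact, and to force all the desired weak squares by a single reverse Easton class iteration $\langle\mathbb{P}_\alpha,\dot{\mathbb{Q}}_\beta\rangle$ in which $\dot{\mathbb{Q}}_\kappa$ is a name for $\mathbb{P}_{\square_{\kappa,\omega_2}}$ whenever $\kappa$ is a cardinal with $\kappa\geq\omega_2$, and $\dot{\mathbb{Q}}_\alpha$ is trivial otherwise, taking inverse limits at all stages of cofinality $\leq\omega_1$ (in particular at every singular limit stage) and direct limits at the remaining, inaccessible, stages. Since $\mathrm{PFA}$ implies $2^{\aleph_0}=\aleph_2$ and each iterand is $(\kappa+1)$-strategically closed and so adds no reals, the equality $2^{\aleph_0}=\aleph_2$ persists at every stage; hence in each intermediate model $2^{\aleph_0}=\aleph_2\leq\omega_2\leq\kappa$, so Theorem \ref{thm:wsq} with $\lambda=\omega_2$ gives $\force_{\mathbb{P}_\kappa}$ ``$\dot{\mathbb{Q}}_\kappa$ is $*$-tactically closed''. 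The chosen support convention is precisely the one demanded by hypothesis (iii) of Lemma \ref{lma:iteration}.

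The point of this set-up is that every set-length initial segment is well behaved. First I would record the standard reverse Easton bookkeeping: all cardinals and cofinalities are preserved and no reals are added, so the $\square_{\kappa,\omega_2}$-sequence added at stage $\kappa$ remains a genuine $\square_{\kappa,\omega_2}$-sequence in the final extension, because every later nontrivial stage forces with a $(\mu+1)$-strategically closed poset for some $\mu>\kappa$ and therefore adds no new subset of $\kappa^+$. Next, by Lemma \ref{lma:iteration} each set-sized $\mathbb{P}_\lambda$ is $*$-tactically closed, and since $*$-tactical closedness is a special case of $*$-operational closedness, Theorem \ref{thm:PFApreserved} shows that $V^{\mathbb{P}_\lambda}\models\mathrm{PFA}$ for every ordinal $\lambda$.

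It remains to pass from the set stages to the full class forcing, and this is the step I expect to be the main obstacle, since both Theorem \ref{thm:PFApreserved} and Lemma \ref{lma:iteration} speak only of set forcing. Let $\mathbb{P}_\infty$ denote the full class iteration, and let $\mathbb{Q}$ be a proper poset with a family $\langle D_\xi\mid\xi<\omega_1\rangle$ of dense subsets in $V^{\mathbb{P}_\infty}$. I would fix a cardinal $\lambda$ large enough that $\mathbb{Q}$ and $\langle D_\xi\mid\xi<\omega_1\rangle$ lie in $H_\lambda^{V^{\mathbb{P}_\infty}}$ and factor $\mathbb{P}_\infty=\mathbb{P}_\lambda*\dot{\mathbb{T}}$. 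The heart of the matter is that the tail $\dot{\mathbb{T}}$ is forced to be $(\lambda+1)$-strategically closed, so it adds no new sequence of length $<\lambda$; consequently $H_\lambda^{V^{\mathbb{P}_\infty}}=H_\lambda^{V^{\mathbb{P}_\lambda}}$, so that $\mathbb{Q}$ and the $D_\xi$ already belong to $V^{\mathbb{P}_\lambda}$, each $D_\xi$ is still dense there, and, because $\dot{\mathbb{T}}$ adds no new countable sequence, $\mathbb{Q}$ remains proper in $V^{\mathbb{P}_\lambda}$. Applying $\mathrm{PFA}$ in $V^{\mathbb{P}_\lambda}$ yields a filter $F$ meeting every $D_\xi$, and since neither $F$ nor the $D_\xi$ are disturbed by $\dot{\mathbb{T}}$, this same $F$ witnesses the instance of $\mathrm{PFA}$ in $V^{\mathbb{P}_\infty}$; hence $V^{\mathbb{P}_\infty}\models\mathrm{PFA}$, and together with the survival of the square sequences this produces the desired model. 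The delicate points to get right are the downward absoluteness of properness and of density under the highly closed tail, and the verification that the reverse Easton tail is genuinely $(\lambda+1)$-strategically closed under the stated support, which is exactly where the interaction between the support convention and the closure of the iterands must be handled with care.
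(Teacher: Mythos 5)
Your overall architecture is exactly the paper's: the same Easton-support class iteration of the posets $\mathbb{P}_{\square_{\kappa,\omega_2}}$, glued together by Theorem \ref{thm:wsq}, Lemma \ref{lma:iteration} and Theorem \ref{thm:PFApreserved}. But two of the steps you pass off as ``standard'' or ``delicate'' are genuine gaps. First, you start from $\mathrm{PFA}$ alone, whereas the paper first arranges $2^\kappa=\kappa^+$ for every cardinal $\kappa\geq\omega_2$ in the ground model (citing \cite{yoshinobu13:_oper} for the consistency of this with $\mathrm{PFA}$), and that hypothesis is load-bearing. $\mathrm{PFA}$ fixes $2^{\aleph_0}=2^{\aleph_1}=\aleph_2$ but says nothing about $2^\kappa$ for $\kappa\geq\omega_2$, so $\mathbb{P}_{\square_{\kappa,\omega_2}}$ has cardinality $2^\kappa$, possibly far above $\kappa^+$. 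The iterand is $(\kappa+1)$-strategically closed, which protects cardinals $\leq\kappa^+$, and has the $(2^\kappa)^+$-c.c., which protects cardinals $>2^\kappa$; nothing you have said protects cardinals in $(\kappa^+,2^\kappa]$, either for a single iterand or, worse, for the initial segments $\mathbb{P}_\alpha$, whose sizes you can no longer bound, so the usual closure-versus-chain-condition factorization argument does not mesh. Since cardinal preservation is what you invoke for (i) $\omega_2^{V^{\mathbb{P}_\alpha}}=\omega_2^V$, needed to apply Theorem \ref{thm:wsq} with $\lambda=\omega_2$ at every stage, (ii) preservation of $\kappa^+$, needed for the stage-$\kappa$ generic to remain a $\square_{\kappa,\omega_2}$-sequence in the final model, and (iii) preservation of $\mathrm{ZFC}$ by the class forcing, this ``bookkeeping'' is a real hole. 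The fix is precisely the paper's opening move: first force (or assume) $2^\kappa=\kappa^+$ above $\omega_2$.

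Second, in the class-forcing reduction your inference ``$\dot{\mathbb{T}}$ adds no new countable sequence, hence $\mathbb{Q}$ remains proper in $V^{\mathbb{P}_\lambda}$'' is invalid as stated. Non-properness of $\mathbb{Q}$ in $V^{\mathbb{P}_\lambda}$ is witnessed by a stationary $S\subseteq[H_\theta]^\omega$, where $\theta$ is roughly $(2^{2^{|\mathrm{TC}(\mathbb{Q})|}})^+$ and hence can lie far above your $\lambda$, together with a $\mathbb{Q}$-name for a club avoiding $S$; for properness to descend one needs the tail to preserve the stationarity of such $S$, and countable distributivity does not give this (shooting a club through a stationary, costationary $S\subseteq\omega_1$ is $\omega_1$-distributive yet destroys a stationary set, so distributive forcing can in principle turn a non-proper poset into a proper one). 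With your choice of $\lambda$ (merely $\mathbb{Q},\langle D_\xi\mid\xi<\omega_1\rangle\in H_\lambda$) neither of the two available repairs applies: (i) choose $\Theta$ large enough that properness of $\mathbb{Q}$ is decided inside $H_\Theta$, and then choose $\lambda$ so large that the tail is forced to add no new elements of $H_\Theta$, whence $H_\Theta^{V^{\mathbb{P}_\lambda}}=H_\Theta^{V^{\mathbb{P}_\infty}}$ and properness transfers both ways; or (ii) use that the tail is not merely distributive but strategically closed, hence proper, hence preserves stationary subsets of $[X]^\omega$ for every $X$, and transfer properness down via the projections $M\cap H_\theta^{V^{\mathbb{P}_\lambda}}$ of the good models $M$ in $V^{\mathbb{P}_\infty}$. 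Either repair requires closure of the tail well beyond $\lambda$ and beyond countable distributivity. It is worth noting that the paper's own preservation theorem is engineered to avoid this issue altogether: in the proof of Theorem \ref{thm:PFApreserved}, properness of $\dot{\mathbb{Q}}$ is used only in the final extension, inside the properness proof of $\mathbb{P}*\dot{\mathbb{Q}}*\dot{\mathbb{R}}$ over the ground model, so no downward transfer of properness is ever needed; the paper is admittedly terse about applying that set-forcing theorem to the proper class $\mathbb{P}_\infty$, and your factorization is the natural way to fill that in, which is exactly why this step has to be done correctly.
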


\begin{proof}
We may assume that in our ground model it holds that $\mathrm{ZFC+PFA}+\text{\lq\lq$2^\kappa=\kappa^+$ for every cardinal $\kappa\geq\omega_2$\rq\rq}$, since the last statement is relatively consistent to $\mathrm{ZFC+PFA}$ (see \cite[Proof of Thereom 17]{yoshinobu13:_oper}). Let
$$\seq{\mathbb{P}_\alpha, \dot{\mathbb{Q}}_\alpha\mid\alpha\in\mathrm{Ord}}$$
be the proper class iterated forcing construction with the Easton support such that $\mathbb{P}_0$ is trivial and that $\force_{\mathbb{P}_\alpha}\text{\lq\lq$\dot{\mathbb{Q}}_\alpha=\mathbb{P}_{\square_{\omega_{2+\alpha}, \omega_2}}$\rq\rq}$ for every ordinal $\alpha$, and let $\mathbb{P}_\infty$ be its direct limit. By standard arguments we have that forcing with $\mathbb{P}_\infty$ preserves $\mathrm{ZFC}$ and cofinalities (for iterations with the Easton support see \cite{baumgartner:_iterated}; for treatment of proper class forcing consult \cite{sdfriedman:_class}), and therefore forces $\square_{\kappa, \omega_2}$ for every cardinal $\kappa\geq\omega_2$. Now  by Theorem \ref{thm:wsq} and Lemma \ref{lma:iteration} $\mathbb{P}_\infty$ is $*$-tactically closed, and thus by Theorem \ref{thm:PFApreserved} $\mathrm{PFA}$ holds in this extension.
\end{proof}

\section{The Setwise Climbability Properties}\label{sec:SCP}

It has been observed that Jensen's square principles and some of their variations can be characterized as a Martin-type axiom for a suitable class of posets. For example, Velleman \cite{velleman_strategy}, and Ishiu and the author \cite{ishiuyoshinobu02} observed that $\square_{\omega_1}$ is equivalent to $\mathrm{MA}_{\omega_2}$ for the class of $(\omega_1+1)$-strategically closed posets. For another example, in \cite{yoshinobu13:_oper} the author introduced the following fragment of $\square_{\omega_1}$ and observed that it is equivalent to $\mathrm{MA}_{\omega_2}$ for the class of operationally closed posets.
 \begin{dfn}\normalfont
$\mathrm{CP}_{\omega_1}$ (the {\it climbability property\/}) is the following statement: There exists a function $f:\omega_2\to\omega_1$ such that for each $\beta\in S^2_1$, there exists a club subset $C$ of $\beta$ with $\mathrm{o.t.}C=\omega_1$ such that $f(\alpha)=\mathrm{o.t.}(C\cap\alpha)$ holds for every $\alpha\in C$.
\end{dfn}

In this section we introduce two more combinatorial principles named as the {\it setwise climbability properties\/}, and show that they are equivalent to $\mathrm{MA}_{\omega_2}$ for the class of $*$-tactically closed posets and that of $*$-operationally closed posets respectively.

\begin{dfn}\normalfont
\begin{enumerate}[(1)]
  \item $\mathrm{SCP}$ is the following statement: There exists a sequence $\seq{z_\alpha\mid\alpha\in S^2_0}$ and a function $f:\omega_2\to\omega_1$ satisfying:
  \begin{enumerate}[(a)]
    \item For each $\alpha\in S^2_0$, $z_\alpha$ is a countable cofinal subset of $\alpha$.
    \item For each $\beta\in S^2_1$, there exists a club subset $C$ of $\beta\cap S^2_0$ with $\mathrm{o.t.}C=\omega_1$ satisfying:
    \begin{enumerate}[(i)]
      \item $\seq{z_\alpha\mid\alpha\in C}$ is increasing and continuous with respect to inclusion.
      \item For each $\alpha\in C$, $f(\alpha)=\mathrm{o.t.}(C\cap\alpha)$ holds.
    \end{enumerate}
  \end{enumerate}
  
  \item $\mathrm{SCP}^-$ is the statement obtained by removing all references to the function $f$ in the above statement of $\mathrm{SCP}$.
\end{enumerate}
\end{dfn}

Now we introduce natural posets respectively for $\mathrm{SCP}$ and $\mathrm{SCP}^-$.

\begin{dfn}\normalfont
We define posets $\mathbb{P}_{\mathrm{SCP}}$ and $\mathbb{P}_{\mathrm{SCP}^-}$ as follows:
\begin{enumerate}[(1)]
\item\label{pscp} A condition $p$ of $\mathbb{P}_{\mathrm{SCP}}$ is of the form
$$
p=\seq{\seq{z^p_\alpha\mid\alpha\in S^2_0\land\alpha\leq\beta^p}, f^p}
$$
satisfying
\begin{enumerate}[(a)]
\item\label{pscpa} $\beta^p$ is an ordinal in $S^2_0$.
\item $f^p:\beta^p+1\to\omega_1$.
\item\label{pscpc} For each $\alpha\in S^2_0$ with $\alpha\leq\beta^p$, $z^p_\alpha$ is a countable cofinal subset of $\alpha$.
\item\label{pscpd} For each $\beta\in \beta^p\cap S^2_1$, there exists a club subset $C$ of $\beta^p\cap S^2_0$ with $\mathrm{o.t.}C=\omega_1$ satisfying:
\begin{enumerate}[(i)]
\item $\seq{z^p_\alpha\mid\alpha\in C}$ is increasing and continuous with respect to inclusion.
\item For each $\alpha\in C$, $f(\alpha)=\mathrm{o.t.}(C\cap\alpha)$.
\end{enumerate}
\end{enumerate}
For $p$, $q\in\mathbb{P}_{\mathrm{SCP}}$, we let $p\leq_{\mathbb{P}_{\mathrm{SCP}}}q$ if $\vec{z}^q=\vec{z}^p\restrict(\beta^q+1)$ and $f^q=q^p\restrict(\beta^q+1)$.
\item A condition $p$ of $\mathbb{P}_{\mathrm{SCP}^-}$ is of the form
$$
p=\seq{z^p_\alpha\mid\alpha\in S^2_0\land\alpha\leq\beta^p}
$$
satisfying (\ref{pscpa}), (\ref{pscpc}) and (\ref{pscpd})(i) in (\ref{pscp}) above.

Both $\mathbb{P}_{\mathrm{SCP}}$ and $\mathbb{P}_{\mathrm{SCP}^-}$ are ordered by initial segment.
\end{enumerate}
\end{dfn}

\begin{lma}\normalfont\label{PSCPlma}
\begin{enumerate}[(1)]
  \item $\mathbb{P}_{\mathrm{SCP}}$ is $*$-operationally closed.\label{PSCP}
  \item $\mathbb{P}_{\mathrm{SCP}^-}$ is $*$-tactically closed.\label{PSCP-}
\end{enumerate}
\end{lma}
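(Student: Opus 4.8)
The plan is to prove both parts by explicitly constructing winning $*$-strategies, following closely the template of Theorem \ref{thm:wsq}; I treat (\ref{PSCP-}) first and obtain (\ref{PSCP}) from it by a refinement using the turn number. For (\ref{PSCP-}) I define a $*$-tactic $\tau$ on $[\mathbb{P}_{\mathrm{SCP}^-}]^{\leq\aleph_0}_+$ by cases. If $A$ has a strongest (longest) condition $p$ with top $\beta^p$, I let $\tau(A)$ extend $p$ by the single new point $\beta^p+\omega\in S^2_0$, with $z_{\beta^p+\omega}:=\{\beta^p+n\mid n<\omega\}$; since the interval $(\beta^p,\beta^p+\omega)$ contains no point of $S^2_1$, this is automatically a legal condition. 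If $A$ has no strongest condition, then $q:=\bigcup A$ is a sequence of some limit length $\beta=\sup\{\beta^p\mid p\in A\}\in S^2_0$, and I set $\tau(A):=\concat{q}{\seq{z_\beta}}$ where, writing $a:=\{\beta^p\mid p\in A\}$ for the stopping points, $z_\beta:=\bigcup\{z^q_\eta\mid\eta\in a\}$; this is a countable cofinal subset of $\beta$ and again introduces no new point of $S^2_1$ below the top, so $\tau(A)$ is legal.

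The substance lies in the winning argument. In a play where Player $\mathrm{II}$ follows $\tau$, write $q_\gamma=\tau(A_\gamma)$ for her moves and $\beta_\gamma$ for their tops. Rule (\ref{cond:gamerulec}) forces each successor move to lengthen the sequence, so $\seq{\beta_\gamma\mid\gamma<\omega_1}$ is strictly increasing and continuous and $\beta^*:=\sup_\gamma\beta_\gamma\in S^2_1$. Every point of $S^2_1$ below $\beta^*$ is already handled inside some $q_\gamma$, so the sole obstacle to the final move is to produce a witnessing club at $\beta^*$: a club $C^*\subseteq\beta^*\cap S^2_0$ of order type $\omega_1$ along which $\seq{z_\alpha\mid\alpha\in C^*}$ is increasing and continuous. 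I claim $C^*:=\{\beta_\gamma\mid\gamma\in\omega_1\cap\mathrm{Lim}\}$ works. At every limit turn $\gamma$ the set $A_\gamma$ has no strongest condition, so $z_{\beta_\gamma}$ is the union $w_\gamma:=\bigcup\{z^q_\eta\mid\eta\in a_\gamma\}$ over the stopping points $a_\gamma=\{\beta^p\mid p\in A_\gamma\}$; as the $a_\gamma$ accumulate continuously and the $z$-values agree on common initial segments, the subsequence $\seq{w_\gamma\mid\gamma\in\omega_1\cap\mathrm{Lim}}$ is increasing and continuous, each $w_\gamma$ being cofinal in $\beta_\gamma$. Hence $\concat{q}{\seq{z_{\beta^*+\omega}}}$ (with $z_{\beta^*+\omega}=\{\beta^*+n\mid n<\omega\}$) is a common extension of all the $q_\gamma$'s, and Player $\mathrm{II}$ wins.

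For (\ref{PSCP}) I upgrade $\tau$ to a $*$-operation $\sigma(\gamma,A)$ by prescribing, in addition, the value of $f$ at each newly created top. The extra requirement is condition (\ref{pscpd})(ii), which binds $f$ to order types along the witnessing club. Since the final club is again $C^*=\{\beta_\gamma\mid\gamma\in\omega_1\cap\mathrm{Lim}\}$ and $\mathrm{o.t.}(C^*\cap\beta_\gamma)=\mathrm{o.t.}(\gamma\cap\mathrm{Lim})$, I set $f(\text{top})=\mathrm{o.t.}(\gamma\cap\mathrm{Lim})$ at each limit turn $\gamma$ (and arbitrarily at successor turns). Then $C^*$ satisfies both (i) and (ii), and the winning argument carries over unchanged. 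This is exactly where the turn number is essential: a memoryless $*$-tactic cannot recover from $A$ alone the order type of the eventual witnessing club, which is why $\mathrm{SCP}$ calls for operational rather than tactical closedness.

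The step I expect to be most delicate is the verification that $C^*$ genuinely witnesses (\ref{pscpd})(i). Player $\mathrm{I}$ may attach wild cofinal sets $z_\eta$ at the $S^2_0$-points he introduces, and the one-step extensions in the strongest-condition case add tails that later moves need not inherit, so $\seq{z_\alpha}$ is neither monotone nor continuous along the full spine $\seq{\beta_\gamma\mid\gamma<\omega_1}$. Restricting attention to the limit turns and defining those $z$-values as unions over stopping points is the device that makes the relevant subsequence coherent independently of Player $\mathrm{I}$'s choices; getting this bookkeeping right is the crux of the proof.
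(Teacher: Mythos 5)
Your proposal is correct and follows essentially the same route as the paper's proof: a canonical $*$-strategy whose limit-turn moves, computed solely from the accumulated set $A$ of Player $\mathrm{I}$'s conditions, cohere into an increasing continuous sequence along the club of limit-turn tops, with the turn number supplying the $f$-values needed for $\mathrm{SCP}$. The only cosmetic difference is that at limit turns the paper takes the new $z$-value to be the set of stopping points $\{\beta^p\mid p\in A\}$ itself rather than your union $\bigcup\{z^q_\eta\mid\eta\in a\}$; both choices give the required monotonicity and continuity.
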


\proof We will only show (\ref{PSCP}). (\ref{PSCP-}) is easier. We will define a $*$-operation $\tau:\omega_1\times[\mathbb{P}_{\mathrm{SCP}}]^{\leq\omega}_+\to\mathbb{P}_{\mathrm{SCP}}$. Let $\delta<\omega$ and $A\in[\mathbb{P}_{\mathrm{SCP}}]^{\leq\omega}_+$. If $\delta$ is $0$ or a successor ordinal, set $\tau(\delta, A)$ so that it properly extends the boolean infimum of $A$. This assures that, whenever Player $\mathrm{II}$ plays according to $\tau$, for each limit $\eta<\omega_1$, the union of Player $\mathrm{I}$'s moves made before the $\eta$-th turn of Player $\mathrm{II}$ has no strongest condition. So for the case $\delta$ is a nonzero limit ordinal, we may define $\tau(\delta, A)$ only for $A$ with no strongest condition. For such $A$, there exist $\gamma\in S^2_0$, $\vec{z}=\seq{z_\alpha: \alpha\in S^2_0, \alpha<\gamma}$ and a function $f:\gamma\to\omega_1$ such that the following holds:
$$
A=\{\seq{\vec{z}\restrict(\beta^p+1), f\restrict(\beta^p+1)}\mid p\in A\}.
$$
Then we set
$$
\tau(\delta, A):=\seq{\concat{\vec{z}}{\seq{z_\gamma}}, f\cup\{\seq{\gamma, \overline{\delta}}\}},
$$
where $z_\gamma=\{\beta^p\mid p\in A\}$ and $\overline{\delta}$ is such that $\delta=\omega(1+\overline{\delta})$.

We will show that $\tau$ is a winning $*$-operation. Consider any play of $G^*(\mathbb{P}_{\mathrm{SCP}})$ where Player $\mathrm{II}$ plays according to $\tau$. It is easy to see that $\mathbb{P}_{\mathrm{SCP}}$ is $\omega_1$-closed, and thus it is enough to show that Player $\mathrm{II}$ can make her $\omega_1$-th move. For $\delta<\omega_1$, let $A_\delta$ denote the union of Player $\mathrm{I}$'s moves made before the $\delta$-th move of Player $\mathrm{II}$. Then there exist an ordinal $\gamma\in S^2_1$, $\vec{z}=\seq{z_\alpha: \alpha\in S^2_0, \alpha<\gamma}$ and a function $f:\gamma\to\omega_1$ such that, for each $\delta<\omega_1$ the following holds:
$$
A_\delta=\{\seq{\vec{z}\restrict(\beta^p+1), f\restrict(\beta^p+1)}\mid p\in A_\delta\}.
$$
Let $\gamma_\xi:=\sup\{\beta^p\mid p\in A_{\omega(1+\xi)}\}$ for $\xi<\omega_1$. Then by the definition of $\tau$, $C=\{\gamma_\xi\mid\xi<\omega_1\}$ is a club subset of $\gamma$. Moreover, for each $\xi<\omega_1$ we have $z_{\gamma_\xi}=\{\beta^p\mid p\in A_{\omega(1+\xi)}\}$, and thus $\seq{z_{\gamma_\xi}\mid\xi<\omega_1}$ is increasing and continuous with respect to inclusion. Furthermore, for each $\xi<\omega_1$ we have $f(\gamma_\xi)=\xi=\mathrm{o.t.}(C\cap\gamma_\xi)$.  These facts assure that $\seq{\vec{z}, f}$ can be extended to a condition of $\mathbb{P}_{\mathrm{SCP}}$, which is a common extension of all moves of Player $\mathrm{I}$. This shows that Player $\mathrm{II}$ wins the game.\qed

In particular, both $\mathbb{P}_{\mathrm{SCP}}$ and $\mathbb{P}_{\mathrm{SCP}^-}$ are $(\omega_1+1)$-strategically closed, and therefore preserve cardinalities below $\omega_2$. Moreover, a simple induction argument using this closedness property gives the following density lemma:
\begin{lma}\normalfont\label{densitylemma}
For each $\beta<\omega_2$, $D_\beta=\{p\in\mathbb{P}_{\mathrm{SCP}}\mid \beta^p>\beta\}$ and $D^-_\beta=\{p\in\mathbb{P}_{\mathrm{SCP}^-}\mid \beta^p>\beta\}$ are dense respectively in $\mathbb{P}_{\mathrm{SCP}}$ and $\mathbb{P}_{\mathrm{SCP}^-}$.\qed
\end{lma}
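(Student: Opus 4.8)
The plan is to prove both density claims by a single induction on $\beta<\omega_2$, the argument for $\mathbb{P}_{\mathrm{SCP}^-}$ being the one for $\mathbb{P}_{\mathrm{SCP}}$ with every reference to $f$ deleted; I treat $\mathbb{P}_{\mathrm{SCP}}$. The auxiliary statement to establish by induction is $\Phi(\beta)$: every condition $p$ extends to some $q\leq_{\mathbb{P}_{\mathrm{SCP}}}p$ with $\beta^q\geq\beta$. Since $\beta^q$ must lie in $S^2_0$, the set $D_\beta$ is then reached by applying $\Phi$ to the least element of $S^2_0$ strictly above $\beta$, so it suffices to prove $\Phi(\beta)$ for all $\beta$.

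The successor and $\mathrm{cf}(\beta)=\omega$ cases are routine. If $\mathrm{cf}(\beta)=\omega$, fix an increasing cofinal $\seq{\beta_n\mid n<\omega}$ in $\beta$ and, using the induction hypothesis, build a $\leq_{\mathbb{P}_{\mathrm{SCP}}}$-descending chain $\seq{q_n\mid n<\omega}$ below $p$ with $\beta^{q_n}\geq\beta_n$. Then $q:=\bigcup_n q_n$ has length $\beta$; since $\beta\in S^2_0$ there is no instance of clause (\ref{pscpd}) to check at $\beta$ itself, while every $\beta'\in\beta\cap S^2_1$ lies below some $\beta^{q_n}$ and so inherits its witnessing club from $q_n$. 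Hence, choosing $z_\beta$ a countable cofinal subset of $\beta$ and $f(\beta)$ arbitrarily, $q$ is a legitimate condition with $\beta^q=\beta\geq\beta$.

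The crux is the case $\beta\in S^2_1$, where crossing $\beta$ forces us to exhibit a club $C\subseteq\beta\cap S^2_0$ of order type $\omega_1$ realizing clause (\ref{pscpd}). Here I would invoke the winning $*$-operation $\tau$ furnished by Lemma \ref{PSCPlma} directly. Fix a cofinal $\seq{\beta_\xi\mid\xi<\omega_1}$ in $\beta$ and consider the play of $G^*(\mathbb{P}_{\mathrm{SCP}})$ starting from $p$ in which Player $\mathrm{II}$ follows $\tau$ and, as the rounds progress, Player $\mathrm{I}$ uses the induction hypothesis (available since each $\beta_\xi<\beta$) to contribute, extending the current position, conditions reaching height $\geq\beta_\xi$. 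Because $\tau$ is winning, Player $\mathrm{II}$ survives all $\omega_1$ moves, so those moves have a common extension $q$; and by the analysis already carried out in the proof of Lemma \ref{PSCPlma}, the ordinals $\gamma_\xi$ at which $\tau$ successively defines its new $z$-values form a club $C=\{\gamma_\xi\mid\xi<\omega_1\}$ witnessing clause (\ref{pscpd}) at $\gamma:=\sup_\xi\gamma_\xi$, with $f(\gamma_\xi)=\mathrm{o.t.}(C\cap\gamma_\xi)$ and $\seq{z_{\gamma_\xi}\mid\xi<\omega_1}$ increasing and continuous. Since $\beta_\xi\leq\gamma_\xi<\beta$ for every $\xi$, we get $\gamma=\beta$, so $q$ is a condition with $\beta^q\geq\beta$ (indeed $\beta^q>\beta$, as $\beta^q\in S^2_0$ while $\beta\in S^2_1$). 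The single genuine obstacle is arranging this cofinality-$\omega_1$ step so that the climbability clause at $\beta$ is satisfied, and that is precisely what the winning strategy of Lemma \ref{PSCPlma} delivers; the surrounding bookkeeping is identical to that proof.
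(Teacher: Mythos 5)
Your proof is correct and is exactly the argument the paper intends: the lemma appears there with no written proof beyond the preceding remark that ``a simple induction argument using this closedness property'' (the game closedness established in Lemma \ref{PSCPlma}) gives it, and your induction on $\beta$ --- routine successor and cofinality-$\omega$ steps, with the winning $*$-operation supplying the common extension (and hence the required club for clause (\ref{pscpd})) at the cofinality-$\omega_1$ step --- is precisely that argument carried out in detail. The only inessential difference is that you run $G^*(\mathbb{P}_{\mathrm{SCP}})$ with the $*$-operation $\tau$ directly rather than the ordinary winning strategy for $G_{\omega_1+1}(\mathbb{P}_{\mathrm{SCP}})$ that it induces.
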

These facts assures that $\mathbb{P}_{\mathrm{SCP}}$ forces $\mathrm{SCP}$ and that $\mathbb{P}_{\mathrm{SCP}^-}$ forces $\mathrm{SCP}^-$.

\begin{thm}\normalfont
\begin{enumerate}[(1)]
  \item The following are equivalent:\label{SCPequiv}
   \begin{enumerate}[(a)]
    \item $\mathrm{SCP}$.\label{SCPa}
    \item Every $*$-operationally closed poset is $\omega_2$-strategically closed.\label{SCPb}
    \item $\mathbb{P}_{\mathrm{SCP}}$ is $\omega_2$-strategically closed.\label{SCPc}
    \item $\mathrm{MA}_{\omega_2}(\text{$*$-operationally closed})$.\label{SCPd}
    \item $\mathrm{MA}_{\omega_2}(\mathbb{P}_{\mathrm{SCP}})$.\label{SCPe}
  \end{enumerate}
  \item\label{SCP-equiv} The statement obtained by replacing each occurrence of `$\mathrm{SCP}$' and `$*$-operationally closed' in (\ref{SCPequiv}) by `$\mathrm{SCP}^-$' and `$*$-tactically closed' respectively is also valid.
\end{enumerate}
\end{thm}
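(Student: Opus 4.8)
The plan is to prove the chain of equivalences in (\ref{SCPequiv}) as a cycle of implications, then observe that each step adapts mutatis mutandis to the $\mathrm{SCP}^-$/$*$-tactically closed setting, which is precisely what (\ref{SCP-equiv}) asserts. The natural cycle is $(\ref{SCPa})\Rightarrow(\ref{SCPb})\Rightarrow(\ref{SCPc})\Rightarrow(\ref{SCPa})$ for the ``strategic closedness'' triangle, a parallel cycle $(\ref{SCPa})\Rightarrow(\ref{SCPd})\Rightarrow(\ref{SCPe})\Rightarrow(\ref{SCPa})$ for the Martin-axiom triangle, together with the trivial implications $(\ref{SCPb})\Rightarrow(\ref{SCPc})$ and $(\ref{SCPd})\Rightarrow(\ref{SCPe})$ (specializing the universally quantified statement to the particular poset $\mathbb{P}_{\mathrm{SCP}}$). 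The two engines driving everything are: first, that $\mathbb{P}_{\mathrm{SCP}}$ forces $\mathrm{SCP}$ (established above via Lemmata \ref{PSCPlma} and \ref{densitylemma}), so a generic filter meeting the $D_\beta$'s yields the witnessing sequence; and second, the standard Velleman-style equivalence between a square-type combinatorial principle, $\omega_2$-strategic closedness of the associated poset, and the relevant fragment of Martin's Axiom.

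\emph{First I would} prove $(\ref{SCPc})\Rightarrow(\ref{SCPa})$ and $(\ref{SCPe})\Rightarrow(\ref{SCPa})$, which are the ``easy'' directions extracting the combinatorial object. For $(\ref{SCPe})\Rightarrow(\ref{SCPa})$: apply $\mathrm{MA}_{\omega_2}(\mathbb{P}_{\mathrm{SCP}})$ to the family $\{D_\beta\mid\beta<\omega_2\}$ from Lemma \ref{densitylemma}; a filter meeting all of these has union a condition-sequence of length $\omega_2$, whose assembled $\seq{z_\alpha}$ and $f$ witness $\mathrm{SCP}$ directly by condition (\ref{pscpd}). For $(\ref{SCPc})\Rightarrow(\ref{SCPa})$, note that $\omega_2$-strategic closedness lets Player $\mathrm{II}$ survive $\omega_2$-many turns against a Player $\mathrm{I}$ who diagonalizes through the $D_\beta$'s, so the resulting descending sequence again reaches length $\omega_2$ and assembles into an $\mathrm{SCP}$-witness. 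The reverse directions $(\ref{SCPa})\Rightarrow(\ref{SCPc})$ and $(\ref{SCPa})\Rightarrow(\ref{SCPe})$ run in the familiar Velleman manner: given an $\mathrm{SCP}$-witness $\seq{z_\alpha},f$ in the ground model, Player $\mathrm{II}$ (resp. the filter) climbs along the prescribed clubs $C$ by always choosing the initial segment of the fixed witnessing data, so that at every limit stage of cofinality $\omega_1$ the coherence built into $\mathrm{SCP}$ guarantees a common extension exists.

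\emph{The most substantial step} is $(\ref{SCPa})\Rightarrow(\ref{SCPb})$: from the bare combinatorial principle one must produce, for an \emph{arbitrary} $*$-operationally closed poset $\mathbb{Q}$, a winning strategy for $G_{\omega_2+1}(\mathbb{Q})$. The idea is that $*$-operational closedness already gives Player $\mathrm{II}$ a winning $*$-operation $\sigma$ for $G^*(\mathbb{Q})$, which handles all play of length $\leq\omega_1$ using only the \emph{set} of opponent moves and the turn ordinal. To extend to length $\omega_2$, one uses the $\mathrm{SCP}$-witness as a global bookkeeping device: the function $f:\omega_2\to\omega_1$ together with the cofinal sets $z_\alpha$ lets Player $\mathrm{II}$, at each ordinal $\beta\in S^2_1$, reconstruct from $f\restrict\beta$ and the $z_\alpha$-coherence an $\omega_1$-length ``local subgame'' through $\beta$ along the club $C$ witnessing $\mathrm{SCP}$ at $\beta$, feed the accumulated set of opponent conditions into $\sigma$, and thereby produce a legal move surviving past $\beta$. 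The delicate point is the \emph{coherence} of these local plays across different limit stages, which is exactly what conditions (i) (continuity of $\seq{z_\alpha}$) and (ii) ($f(\alpha)=\mathrm{o.t.}(C\cap\alpha)$) are engineered to supply, ensuring the strategy is well-defined and consistent as $\beta$ ranges over $S^2_1$. I expect this coherence management to be the main obstacle, since it requires verifying that the set of conditions $\sigma$ receives at stage $\beta$ genuinely equals the infimum of all preceding moves and that the $f$-values align the order types correctly.

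\emph{Finally}, for part (\ref{SCP-equiv}) I would check that every argument above survives the substitution of $\mathrm{SCP}^-$ for $\mathrm{SCP}$, $*$-tactically closed for $*$-operationally closed, and $\mathbb{P}_{\mathrm{SCP}^-}$ for $\mathbb{P}_{\mathrm{SCP}}$. The key observations are that Lemmata \ref{PSCPlma}(\ref{PSCP-}) and \ref{densitylemma} already furnish the $*$-tactical closedness of $\mathbb{P}_{\mathrm{SCP}^-}$ and the density of the $D^-_\beta$, and that $\mathrm{SCP}^-$ is simply $\mathrm{SCP}$ with the $f$-layer deleted. Since a $*$-tactic is the special case of a $*$-operation ignoring its ordinal argument, the winning $*$-tactic plays the role $\sigma$ played before; the bookkeeping in the hard direction $(\ref{SCPa})\Rightarrow(\ref{SCPb})$ no longer needs $f$ precisely because the strategy is forbidden to consult the turn ordinal, so the cofinal sets $z_\alpha$ alone must carry the coherence — which is exactly the information $\mathrm{SCP}^-$ retains. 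Thus the $f$-free bookkeeping matches the $f$-free closedness notion, and the whole cycle of equivalences goes through verbatim for the starred-minus objects.
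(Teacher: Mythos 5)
Your overall architecture --- a cycle of implications with (\ref{SCPe})$\Rightarrow$(\ref{SCPa}) via the density sets $D_\beta$ of Lemma \ref{densitylemma}, the specializations (\ref{SCPb})$\Rightarrow$(\ref{SCPc}) and (\ref{SCPd})$\Rightarrow$(\ref{SCPe}) via Lemma \ref{PSCPlma}, and (\ref{SCPa})$\Rightarrow$(\ref{SCPb}) as the one substantial step --- is the same as the paper's, and your account of how the $\mathrm{SCP}$-clubs make the stages of cofinality $\omega_1$ survivable is correct. But your sketch of (\ref{SCPa})$\Rightarrow$(\ref{SCPb}) has a genuine gap: it says nothing about how Player $\mathrm{II}$ survives the limit stages $\gamma\in S^2_0$ (cofinality $\omega$), nor what she plays at successor stages --- and that is where the real work lies. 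At $\gamma\in S^2_0$ one must show the infimum of \emph{all} preceding moves is nonzero; the $\mathrm{SCP}$ data alone cannot do this, because the coherence conditions (i), (ii) only constrain the $z_\alpha$ along the clubs $C$ attached to points $\beta\in S^2_1$, while for an arbitrary $\gamma\in S^2_0$ the sets $z_\alpha$ ($\alpha<\gamma$) bear no relation to one another, so the family $\{a_\xi\mid\xi\in z_\gamma\}$ is not presented to $\sigma$ as part of any play of $G^*(\mathbb{Q})$ and there is no reason the current position at $\gamma$ is nonzero. The paper fills this hole with an additional combinatorial device, the Ishiu--Yoshinobu tree of Lemma \ref{directive}: a tree of height $\omega$ on $\omega_2\setminus\mathrm{Lim}$, order-compatible with $<$, in which every $\alpha\in S^2_0$ is the supremum of a cofinal branch. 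Player $\mathrm{II}$'s successor-stage moves are defined by feeding the finite tree-chain $\{a_\xi\mid\xi\leq_{\mathcal T}\gamma\}$ (with ordinal argument $\mathrm{ht}_{\mathcal T}(\gamma)$) into the $*$-operation, so that along any branch the moves form a legal $\omega$-length play of $G^*$ according to the $*$-operation; the branch converging to a given $\gamma\in S^2_0$ then witnesses that the current position there is nonzero, and only after that do $z_\gamma$ and $f(\gamma)$ enter. Without this tree (or some substitute doing the same coherence job at cofinality $\omega$), your proposed strategy is not even well defined at densely many stages, so the hard implication does not go through as written.

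A secondary point: your direct sketches of (\ref{SCPa})$\Rightarrow$(\ref{SCPc}) and (\ref{SCPa})$\Rightarrow$(\ref{SCPe}) --- Player $\mathrm{II}$ (or the filter) ``always choosing the initial segment of the fixed witnessing data'' --- cannot be right as stated, since Player $\mathrm{II}$ must extend arbitrary moves of Player $\mathrm{I}$ (and the filter must meet dense sets), and these need not have anything to do with the ground-model witness; the $\mathrm{SCP}$-witness indexes the \emph{turns} of the game, it does not prescribe the \emph{conditions} played. This is harmless only because those two implications are redundant: they follow from (\ref{SCPa})$\Rightarrow$(\ref{SCPb}) together with Lemma \ref{PSCPlma} and the standard fact that $\mathrm{MA}_{\omega_2}$ holds for every $\omega_2$-strategically closed poset --- which is also the fact you need, but left implicit, for the step (\ref{SCPa})$\Rightarrow$(\ref{SCPd}) of your ``Martin-axiom triangle.''
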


\proof Again we only show (\ref{SCPequiv}), since (\ref{SCP-equiv}) is easier. Since $\mathrm{MA}_{\omega_2}$ is valid for every $\omega_2$-strategically closed poset, (\ref{SCPb}) implies (\ref{SCPd}) and (\ref{SCPc}) implies (\ref{SCPe}) respectively. (\ref{SCPb}) implies (\ref{SCPc}) and (\ref{SCPd}) implies (\ref{SCPe}) by Lemma \ref{PSCPlma}(\ref{PSCP}). 
(\ref{SCPe}) implies (\ref{SCPa}) by Lemma \ref{densitylemma}, since a filter which intersects every $D_\beta$ generates a witness for $\mathrm{SCP}$. Now assume (\ref{SCPa}) and show (\ref{SCPb}) holds. Suppose $\seq{z_\alpha\mid\alpha\in S^2_0}$ and $f$ witness $\mathrm{SCP}$. Let $\mathbb{P}$ be any $*$-operationally closed poset, and $\tau$ a winning $*$-operation for $G^*(\mathbb{P})$. We may assume that $\tau(\delta, A)$ is a common extension of $A$ for every $\delta<\omega_1$ and $A\in[\mathbb{P}]^{\leq\omega}_+$. We will describe how Player $\mathrm{II}$ wins $G_{\omega_2}(\mathbb{P})$. We use symbols $a_\gamma$ and $b_\gamma$ to denote the $\gamma$-th move of Player $\mathrm{I}$ and Player $\mathrm{II}$ respectively. We will use the following lemma (for proof see Ishiu and Yoshinobu \cite[Lemma 2.3 and the proof of Theorem 3.3]{ishiuyoshinobu02}).
\begin{lma}[Ishiu and Yoshinobu]\normalfont\label{directive}
There exists a tree $\mathcal{T}=\seq{S, <_{\mathcal{T}}}$ (where $S=\omega_2\setminus\mathrm{Lim}$)
of height $\omega$ such that
\begin{enumerate}[(1)]
\item For every $\beta$, $\gamma\in S$, $\beta<_{\mathcal{T}}\gamma$ implies $\beta<\gamma$.
\item For every $\alpha\in S^2_0$, there exists a cofinal branch $b$ of $\mathcal{T}$ with $\sup b=\alpha$.\qed
\end{enumerate}
\end{lma}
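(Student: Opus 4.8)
The plan is to reduce the lemma to the construction of a coherent family of cofinal $\omega$-branches and to build that family by transfinite recursion. First I would observe that it suffices to assign, to each $\alpha\in S^2_0$, a subset $b_\alpha\subseteq S\cap\alpha$ of order type $\omega$ and cofinal in $\alpha$, in such a way that the assignment is \emph{tree-coherent}: whenever some $\gamma\in S$ occurs in both $b_\alpha$ and $b_{\alpha'}$, the two branches agree up to and including $\gamma$ (the cleanest special case being that the $b_\alpha$ are pairwise disjoint $\omega$-chains). Given such a family, define $<_{\mathcal{T}}$ by taking the predecessor chain of each $\gamma$ to be its initial segment inside the essentially unique branch containing it, and declaring every $\gamma\in S$ lying on no $b_\alpha$ to be a level-$0$ root. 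Since each $b_\alpha$ is increasing in the ordinals, condition (1) is automatic, the height is $\omega$, and (2) holds by construction. Note also that points of $S^2_1$ require no attention: a cofinal branch is an $\omega$-chain, so its supremum has cofinality at most $\omega$, and hence $\cf$-$\omega_1$ ordinals neither need nor obstruct a branch.

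Then I would build the $b_\alpha$ by recursion on $\alpha\in S^2_0$ in increasing order, maintaining tree-coherence together with an auxiliary ``reserve'' invariant. The guiding dichotomy is whether $\alpha$ is a limit of points of $S^2_0$. If it is \emph{not}, set $\nu=\sup(S^2_0\cap\alpha)<\alpha$; every previously chosen branch lies below $\nu$, so the interval $(\nu,\alpha)$ consists entirely of fresh successor ordinals and is cofinal in $\alpha$. I would then take $b_\alpha$ to be a \emph{sparse} cofinal $\omega$-sequence inside $(\nu,\alpha)$, sparse meaning that $(\nu,\alpha)$ still contains cofinally many unused ordinals afterwards; this keeps a reservoir in each such ``top block'' available for later stages.

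The hard case, and the main obstacle, is when $\alpha$ is a limit of points of $S^2_0$, say $\alpha=\sup_k\gamma_k$ with $\gamma_k\in S^2_0$ strictly increasing. Here no fresh terminal interval is available, and a naive choice can be blocked because the branches of the $\gamma_k$'s and of the $\cf$-$\omega$ limits lying between them may have used up the room below $\alpha$. The key point I would exploit is that the reserve below a given ordinal is consulted only at the single moment that ordinal is processed: once $b_\gamma$ is fixed, later branches need merely be coherent with (or disjoint from) it, so they may freely consume other ordinals below $\gamma$ without damaging $b_\gamma$. Combining this with the sparseness maintained in the isolated case, I would arrange that cofinally many of the blocks $(\gamma_k,\gamma_{k+1})$ still carry an unused ordinal at stage $\alpha$, and then build $b_\alpha$ by selecting one such ordinal from cofinally many of these blocks, producing a cofinal $\omega$-chain coherent with the existing tree. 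Making this bookkeeping airtight --- that is, fixing once and for all a strong enough inductive invariant (a sufficiently rich system of reserved, sparsely distributed successor ordinals below every $\cf$-$\omega$ limit) that survives the \emph{unbounded nesting} of $\cf$-$\omega$ limits inside $\cf$-$\omega$ limits --- is the technical heart of the argument; everything else is routine. A convenient preliminary simplification is to cut $\omega_2$ at a club of $\cf$-$\omega_1$ points, which are transparent by the remark above, and to solve the problem separately on each resulting block, thereby reducing to domains of size $\aleph_1$.
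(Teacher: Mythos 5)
Your reduction of the lemma to the existence of a pairwise disjoint (or tree-coherent) family of cofinal $\omega$-chains $b_\alpha\subseteq S\cap\alpha$ is correct, as are the observations that points of $S^2_1$ are irrelevant and that the isolated case $\sup(S^2_0\cap\alpha)<\alpha$ is easy. But there is a genuine gap exactly where you place ``the technical heart'': no inductive invariant is exhibited, and the ones your sketch points toward provably do not suffice. The natural invariant --- below every $\gamma\in S^2_0$ the unused successor ordinals remain cofinal in $\gamma$ --- is indeed preserved by every single step of the recursion (a new branch of order type $\omega$ meets each smaller ordinal in a finite set, and can be chosen sparsely at its own supremum), but it is \emph{not} preserved at limit stages: when $\gamma$ is a limit of points of $S^2_0$, up to $\aleph_1$-many previously processed $\beta<\gamma$ have contributed branches $b_\beta$ lying entirely below $\gamma$, and the union of these can cover a whole tail of $S\cap\gamma$ even though each individual step preserved cofinality of the complement (an intersection of cofinal sets need not be cofinal; indeed the intervals between consecutive points of $S^2_0$ are countable, while the set of earlier stages attacking them is of size $\aleph_1$). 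Note also that the ``key point'' you invoke --- that once $b_\gamma$ is fixed, later branches consume only finitely much below $\gamma$ --- controls interference from \emph{larger} stages, which was never the danger; the danger is accumulation from the \emph{smaller} stages, and nothing in the proposal bounds it. Making the reserve system ``airtight'' is not bookkeeping: it is essentially equivalent to the lemma itself (for instance, it amounts to producing a ladder system on $S^2_0$ in which every ordinal lies on at most countably many ladders), which is precisely why the present paper does not reprove the statement but cites the construction of Ishiu and Yoshinobu.

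The preliminary simplification is also false as stated: there is no club of points of cofinality $\omega_1$, since $S^2_1$ is not closed (the supremum of a strictly increasing $\omega$-sequence from $S^2_1$ has cofinality $\omega$). If instead you cut along an arbitrary club, say the multiples of $\omega_1$, then the cut points of countable cofinality belong to $S^2_0$ and need branches threading through infinitely many blocks, so the problem does not decompose into independent problems on domains of size $\aleph_1$: the set of cut points again has order type $\omega_2$, and the requirement on its cofinality-$\omega$ members is a copy of the original problem. This circularity is the same self-similarity that defeats the naive reserve argument, so the proposed reduction buys nothing on the hard case; the missing idea has to be supplied there, and it is the whole content of the lemma.
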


Let $\mathcal{T}$ be a tree as above. In a play of $G_{\omega_2}(\mathbb{P})$, for each $\gamma<\omega_2$, Player $\mathrm{II}$ may choose her $\gamma$-th move in the following way:
$$
b_\gamma=
\begin{cases}
\tau(\mathrm{ht}_{\mathcal{T}}(\gamma), \{a_\xi\mid\xi\leq_{\mathcal{T}}\gamma\})&\text{if $\gamma\in S$,}\\
\tau(f(\gamma), \{a_\xi\mid\xi\in z_\gamma\})&\text{if $\gamma\in S^2_0$,}\\
\text{any common extension of $\{a_\xi\mid\xi<\gamma\}$}&\text{if $\gamma\in S^2_1$.}
\end{cases}
$$
Let us show that this is a winning strategy. Consider any play of $G_{\omega_2}(\mathbb{P})$ where Player $\mathrm{II}$ plays according to this strategy. It is enough to show that, for each $\gamma<\omega_2$,
\begin{enumerate}[(i)]
  \item The $\gamma$-th turn of Player $\mathrm{II}$ exists, that is, the current position of the turn is nonzero.\label{gamma1}
  \item The $\gamma$-th move of Player $\mathrm{II}$ is legitimate, that is, her move extends the current position of the turn.\label{gamma2}
\end{enumerate}

We will show (\ref{gamma1}) and (\ref{gamma2}) by induction on $\gamma$. So by induction hypothesis we may suppose that all moves preceding the $\gamma$-th move of Player $\mathrm{II}$ have been legitimately made.

\smallskip\noindent\underline{Case 1} $\gamma\in S$.

In this case (\ref{gamma1}) is immediate, since the current position is $a_\gamma$. (\ref{gamma2}) follows from the following inequality:
$$
\tau(\mathrm{ht}_{\mathcal{T}}(\gamma), \{a_\xi\mid\xi\leq_{\mathcal{T}}\gamma\})\leq_{\mathcal{B}(\mathbb{P})}\bigwedge\{a_\xi\mid\xi\leq_{\mathcal{T}}\gamma\}=a_\gamma.
$$

\smallskip\noindent\underline{Case 2} $\gamma\in S^2_0$.

By the definition of $\mathcal{T}$, there is a cofinal branch $b=\seq{\xi_n\mid n<\omega}$ of $\mathcal{T}$ such that $\sup b=\gamma$. For each $n<\omega$ it holds that $\xi_n\in S$ and $\mathrm{ht}_{\mathcal{T}(\xi_n)}=n$, and thus we have $b_{\xi_n}=\tau(n, \{a_{\xi_m}\mid m\leq n\})$. Therefore, setting $A_n=\{a_{\xi_m}\mid m\leq n\}$ for each $n<\omega$,
$$
\begin{matrix}
\mathrm{I}: & A_0  & A_1 & A_2 & \cdots\\
\mathrm{II}: & \hspace{36pt} b_{\xi_0} & \hspace{36pt} b_{\xi_1} & \hspace{36pt} b_{\xi_2} & \cdots
\end{matrix}
$$
forms a part of a play of $G^*(\mathbb{P})$ where Player $\mathrm{II}$ plays according to $\tau$. Therefore $\bigwedge\{b_{\xi_n}\mid n<\omega\}$ is nonzero. Since $\{b_{\xi_n}\mid n<\omega\}$ is cofinal in the moves of Player $\mathrm{I}$ made before the $\gamma$-th move of Player $\mathrm{II}$, $\bigwedge\{b_{\xi_n}\mid n<\omega\}$ is equal to the current position of the turn. This shows (\ref{gamma1}). For (\ref{gamma2}), note that we have
$$
\tau(f(\gamma), \{a_\xi\mid\xi\in z_\gamma\})\leq_{\mathcal{B}(\mathbb{P})}\bigwedge\{a_\xi\mid\xi\in z_\gamma\}.
$$
Since $z_\gamma$ is cofinal in $\gamma$, the right hand side of the above inequality is equal to the current position of the $\gamma$-th turn of Player $\mathrm{II}$.

\smallskip\noindent\underline{Case 3} $\gamma\in S^2_1$.

By the definitions of $\seq{z_\alpha\mid\alpha\in S^2_0}$ and $f$ there exists a club subset $C$ of $\gamma\cap S^2_0$ such that $\seq{z_\alpha\mid\alpha\in C}$ is increasing and continuous with respect to the inclusion, and that $f(\alpha)=\mathrm{o.t.}(C\cap\alpha)$ holds for each $\alpha\in C$. Therefore, letting $\seq{\alpha_\xi\mid\xi<\omega_1}$ be the increasing enumeration of $C$, it holds that $b_{\alpha_\xi}=\tau(\xi, \{a_\eta\mid\eta\in z_{\alpha_\xi}\})$. Therefore, setting $A_\xi=\{a_\eta\mid\eta\in z_{\alpha_\xi}\}$ for each $\xi<\omega_1$,
$$
\begin{matrix}
\mathrm{I}: & A_0 & A_1 & A_2 & \cdots & & A_{\omega+1} & \cdots\\
\mathrm{II}: & \hspace{36pt} b_{\alpha_0} & \hspace{36pt} b_{\alpha_1} & \hspace{36pt} b_{\alpha_2} & \cdots & b_{\alpha_\omega} & \hspace{36pt} b_{{\alpha_{\omega+1}}} & \cdots
\end{matrix}
$$
forms a play of $G^*(\mathbb{P})$ where Player $\mathrm{II}$ plays according to $\tau$. This implies that $\bigwedge\{b_{\alpha_\xi}\mid\xi<\omega_1\}$ is nonzero. Since $\{b_{\alpha_\xi}\mid \xi<\omega_1\}$ is cofinal in the moves made before the $\gamma$-th turn of Player $\mathrm{II}$, $\bigwedge\{b_{\alpha_\xi}\mid\xi<\omega_1\}$ is equal to the current position of the turn. This shows (\ref{gamma1}). In this case (\ref{gamma2}) is clear.\qed

\section{Operations versus $*$-tactics (1): preservation under $*$-tactically closed forcing}\label{sec:opstar1}
In this section we show the following theorem.

\begin{thm}\normalfont\label{thm:CCholds}
Assume $\mathrm{MA}^+(\text{$\omega_1$-closed})$. Then for every $*$-tactically closed poset $\mathbb{P}$, it holds that
$$
\force_\mathbb{P}\mathrm{CC},
$$
where $\mathrm{CC}$ denotes Chang's Conjecture.
\end{thm}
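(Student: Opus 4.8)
The plan is to reduce $\mathrm{CC}$ in $V^{\mathbb{P}}$ to the existence of a long $\prec^s_{\omega_1}$-chain of countable models, and then to produce such a chain by a single application of $\mathrm{MA}^+(\text{$\omega_1$-closed})$ in $V$ to an auxiliary poset built from $\mathbb{P}$ and its $*$-tactic. Recall that $\mathrm{CC}$ asserts that every structure on $\omega_2$ with a predicate for $\omega_1$ has an elementary substructure $M$ with $|M|=\aleph_1$ and $|M\cap\omega_1|=\aleph_0$. In the notation of this paper such an $M$ is exactly the union of a $\prec^s_{\omega_1}$-increasing chain $\langle M_\xi\mid\xi<\omega_1\rangle$ of countable elementary submodels: along such a chain $M_\xi\cap\omega_1$ is a fixed countable set while $M_\xi\cap\omega_2$ strictly increases, whence $|M\cap\omega_1|=\aleph_0$ and $|M\cap\omega_2|=\aleph_1$. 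Thus, fixing a $\mathbb{P}$-name $\dot F$ for such a structure, it suffices to produce, below an arbitrary condition, a $\mathbb{P}$-name for a chain of this kind capturing $\dot F$.

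First I would imitate the architecture of the proof of Theorem \ref{thm:PFApreserved}. Let $\tau$ be a winning $*$-tactic for $G^*(\mathbb{P})$, which by Sublemma \ref{sublma:defensivewinning} may be taken defensive, and fix a large regular $\theta$, letting $N$ range over countable $N\prec\langle H_\theta,\in,\mathbb{P},\dot F,\tau\rangle$. I would define in $V$ an auxiliary poset $\mathbb{S}$ whose conditions simultaneously carry (i) a $\mathbb{P}$-condition, (ii) a countable $\prec^s_{\omega_1}$-chain of such models $N$ compatible with $\mathbb{P}$-genericity, and (iii) a record of an initial segment of a play of $G^*(\mathbb{P})$ in which Player $\mathrm{II}$ follows $\tau$ and whose countable sets of conditions are glued to the sets $N\cap\mathbb{P}$ exactly as in the poset $\dot{\mathbb{R}}$ of Theorem \ref{thm:PFApreserved}. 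The function of component (iii) is that the master $\mathbb{P}$-condition attached to each limit of the chain is the one $\tau$ returns on the union of the recorded sets; here Lemma \ref{lma:gseqstrcl} and Lemma \ref{lma:strgen} are used to identify this union with $N\cap G$ and to compute the restrictions $q\restrict N$.

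The decisive structural claim is that $\mathbb{S}$ is $\omega_1$-closed \emph{in} $V$. Given a descending $\omega$-sequence of conditions of $\mathbb{S}$, the recorded sets assemble into a genuine partial play of $G^*(\mathbb{P})$ of length $\omega$ in which Player $\mathrm{II}$ plays according to $\tau$; since $\tau$ is \emph{winning}, Player $\mathrm{II}$ can make her $\omega$-th move, and this yields a common extension in $\mathbb{P}$ of all the $\mathbb{P}$-parts, hence a lower bound in $\mathbb{S}$. This is precisely the step where $*$-tactical closedness is indispensable: the full information needed by an $(\omega_1+1)$-strategy cannot be reconstructed from the bounded data a single $\mathbb{S}$-condition can store, whereas a $*$-tactic depends only on the set of conditions played so far, and that set \emph{is} recorded. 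Descending sequences of limit length $\le\omega_1$ are handled by iterating the same observation along the $*$-game.

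Finally I would apply $\mathrm{MA}^+(\text{$\omega_1$-closed})$ in $V$ to $\mathbb{S}$ against $\aleph_1$ dense sets forcing the chain to extend past every $\xi<\omega_1$, to make $N\cap\omega_2$ reach past every $\beta<\omega_2$, and to decide $\dot F$ on the relevant arguments, together with a name $\dot S$ for the set of stages at which the chain genuinely grows in $\omega_2$ while keeping $N\cap\omega_1$ fixed. The role of the ``$+$'' is the familiar one in deriving $\mathrm{CC}$ from $\mathrm{MA}^+(\text{$\omega_1$-closed})$: once $1_{\mathbb{S}}\force_{\mathbb{S}}\text{\lq\lq$\dot S$ is stationary\rq\rq}$, its stationary-reflection clause forces the resulting chain to have length $\omega_1$ with cofinally many proper growth steps, so that the union model $M$ has $|M\cap\omega_2|=\aleph_1$ and $M\cap\omega_1$ countable. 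The same filter generates, via $\tau$ applied to the union of all recorded sets, a single $\mathbb{P}$-condition below the given one forcing the corresponding $\dot M$ to be a Chang substructure for $\dot F$. I expect the main obstacle to lie in the coordinated bookkeeping between (ii) and (iii): one must check that the value $N\cap\omega_1$ is preserved through all $\omega_1$ stages while the master conditions returned by $\tau$ stay $(N,\mathbb{P})$-generic, and that $1_{\mathbb{S}}$ really forces $\dot S$ stationary so that the ``$+$'' applies. Verifying the $\omega_1$-closedness of $\mathbb{S}$ and this coherence are the two points carrying the real weight; the remaining density arguments are routine.
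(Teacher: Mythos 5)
There is a genuine gap, and it is located exactly at the two points you dismiss as ``routine'': the density of the chain-growth sets in your auxiliary poset $\mathbb{S}$, and (relatedly) its $\omega_1$-closedness. Here is a reductio showing the architecture cannot work as described. It is a ZFC theorem that any $\omega_1$-closed poset admits a filter meeting any prescribed $\aleph_1$-many dense sets (build a descending $\omega_1$-sequence, using countable closure at limits); the ``$+$'' is only needed for the stationarity of the interpreted name. So if your $\mathbb{S}$ were $\omega_1$-closed \emph{and} the sets ``the recorded $\prec^s_{\omega_1}$-chain has length $\geq\xi$ with strict growth in $\omega_2$'' were dense, then ZFC alone (take $\mathbb{P}$ trivial, which is $*$-tactically closed) would produce a $\prec^s_{\omega_1}$-chain of length $\omega_1$ of countable elementary submodels of $H_\theta$, whose union is a set of size $\aleph_1$ with countable intersection with $\omega_1$ and intersection with $\omega_2$ of size $\aleph_1$ --- i.e.\ ZFC would prove $\mathrm{CC}$, which fails in $L$. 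Hence at least one pillar must fail, and in fact it is the density: extending a chain whose top model is $N$ by some countable $N'\succ^s_{\omega_1}N$ (same trace on $\omega_1$, strictly larger trace on $\omega_2$, compatible with the attached strongly generic master conditions) is a one-step Chang-type extension, which is essentially the statement being proved and is not available by a density argument inside any poset definable from $\mathbb{P}$ and $\tau$. Your appeal to the ``$+$'' via a stationary name $\dot S$ cannot repair this: stationary-set preservation cannot make non-dense sets dense, and without densely available growth steps the name $\dot S$ is not even forced to be stationary. (Separately, your $\omega_1$-closedness argument only treats the $\mathbb{P}$-part and play record --- that part is indeed the paper's Definition \ref{dfn:posetr} --- but the limit of a descending sequence of conditions must also supply a strongly generic master condition for the \emph{union} model, which fails for adversarially chosen sequences; and ``$N\cap\omega_2$ reaches past every $\beta<\omega_2$'' would require $\omega_2$-many dense sets, more than $\mathrm{MA}^+$ permits, though for $\mathrm{CC}$ size $\aleph_1$ suffices.)

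What the paper does instead is structurally different in precisely this respect: the forcing axiom is applied not once but $\omega_1$-many times, once per successor stage of an external recursion of length $\omega_1$ carried out in $V$. The axiom is used in the Miyamoto--Usuba equivalent form $\mathrm{FA}^*(\text{$\omega_1$-closed})$ (Definition \ref{dfn:fa}, Lemma \ref{lma:fastar}), whose clause (iii) $N\prec_{\omega_1}N(F)$ \emph{is} the one-step trace-preserving model growth you need; Lemma \ref{lma:FAstrong}, using the fact that the auxiliary poset $\mathbb{R}(\mathbb{P},\sigma)$ collapses $\omega_2$ (Lemma \ref{collapse}), upgrades this to strict growth $N\prec^s_{\omega_1}N(F)$. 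Each application simultaneously produces the next model $N_{\zeta+1}=N_\zeta(F)$ and a directed $F\subseteq\mathbb{R}$ whose union is a length-$\omega_1$ play of $G^*(\mathbb{P})$ according to $\sigma$, so that (by the winning $*$-tactic and elementarity of $N_{\zeta+1}$) a common extension $p_{\zeta+1}\in N_{\zeta+1}$ of all played conditions exists below $\sigma(A_\zeta)$ --- this is how the genericity data and the model growth are kept coherent, something no pre-built dense set can encode. Limit stages are then handled by hand, via the projection $\pi$ and Lemma \ref{lma:onestep}, to keep the master conditions $(N_\gamma,\mathbb{R})$-strongly generic. No stationary name appears anywhere in the paper's proof of this theorem; the single-application-plus-stationary-name pattern you propose is the mechanism of Theorem \ref{thm:SCPfails} in \S\ref{sec:opstar2}, and it does not transfer to the present statement.
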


Since Chang's Conjecture negates $\mathrm{CP}$ (see \cite{yoshinobu13:_oper} for proof), Theorem \ref{thm:CPfails} implies the following corollary.

\begin{cor}\normalfont\label{thm:CPfails}
Assume $\mathrm{MA}^+(\text{$\omega_1$-closed})$. Then for every $*$-tactically closed poset $\mathbb{P}$, it holds that
$$
\force_\mathbb{P}\neg\mathrm{CP}.
$$
\end{cor}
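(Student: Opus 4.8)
The plan is to read off the corollary as a direct consequence of Theorem~\ref{thm:CCholds} together with the fact, established in~\cite{yoshinobu13:_oper}, that Chang's Conjecture refutes the climbability property. No new forcing-theoretic construction is needed here: the whole argument consists in combining the theorem just proved with an imported $\mathrm{ZFC}$ implication and checking that this combination is legitimate in the generic extension. Accordingly the proof will be short, with the substance residing in Theorem~\ref{thm:CCholds} and in the earlier paper.

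Concretely, I would proceed in three brief steps. First, apply Theorem~\ref{thm:CCholds}: since $\mathrm{MA}^+(\text{$\omega_1$-closed})$ is assumed and $\mathbb{P}$ is an arbitrary $*$-tactically closed poset, we have $\force_\mathbb{P}\mathrm{CC}$. Second, recall from~\cite{yoshinobu13:_oper} that $\mathrm{CC}$ implies $\neg\mathrm{CP}$; the essential point is that this is a theorem of $\mathrm{ZFC}$, proved by a purely combinatorial argument in which $\omega_1$, $\omega_2$ and $S^2_1$ are interpreted internally in whatever model one works in. Third, since $\mathrm{CC}\to\neg\mathrm{CP}$ holds in $\mathrm{ZFC}$, it holds in $V^\mathbb{P}$, that is, $\force_\mathbb{P}(\mathrm{CC}\to\neg\mathrm{CP})$; combining this with $\force_\mathbb{P}\mathrm{CC}$ and applying the forcing theorem yields $\force_\mathbb{P}\neg\mathrm{CP}$, which is exactly the assertion of the corollary.

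The only point demanding genuine care --- and hence the main obstacle, though a mild one --- is the transfer of the implication $\mathrm{CC}\Rightarrow\neg\mathrm{CP}$ to $V^\mathbb{P}$. Here it matters that the proof of this implication in~\cite{yoshinobu13:_oper} invokes no hypothesis beyond $\mathrm{ZFC}$: in particular it does not rely on $\mathrm{MA}^+(\text{$\omega_1$-closed})$, which a $*$-tactically closed forcing is not guaranteed to preserve. Because both $\mathrm{CC}$ and $\mathrm{CP}$ refer to the second uncountable cardinal and the stationary set $S^2_1$ as computed inside the relevant model, the implication survives verbatim in $V^\mathbb{P}$ irrespective of whether $\mathbb{P}$ happens to preserve $\omega_2$. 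Thus no separate preservation argument is required, and the corollary follows at once.
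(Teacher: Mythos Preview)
Your proposal is correct and follows exactly the same approach as the paper: the paper derives the corollary in one line from Theorem~\ref{thm:CCholds} together with the fact (cited from \cite{yoshinobu13:_oper}) that Chang's Conjecture negates $\mathrm{CP}$. Your additional remarks about the transferability of the $\mathrm{ZFC}$ implication $\mathrm{CC}\Rightarrow\neg\mathrm{CP}$ to $V^\mathbb{P}$ are accurate and make explicit what the paper leaves implicit.
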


Note that since $\mathrm{CP}$ can be forced by an $(\omega_1+1)$-operationally closed forcing, Corollary \ref{thm:CPfails} shows that $(\omega_1+1)$-operational closedness does not imply $*$-tactical closedness. Since $\mathrm{SCP}^-$ can be forced by a $*$-tactically closed forcing, Corollary \ref{thm:CPfails} also shows that $\mathrm{SCP}^-$ does not imply $\mathrm{CP}$.

We will give a proof of Theorem \ref{thm:CCholds} below. Our proof is based on and generalizes that of Miyamoto \cite[Theorem 1.3]{miyamoto:_faandcc}, which obtains a model of $\mathrm{CC}$ with some weak fragment of $\square_{\omega_1}$, assuming (an axiom equivalent to) $\mathrm{MA}^+(\text{$\omega_1$-closed})$ in the ground model. Note also that Miyamoto's argument was extracted from Sakai \cite{sakai:_ccandws}, which obtains a model of $\mathrm{CC}$ with $\square_{\omega_1, 2}$, starting from the ground model with a measurable cardinal.

We will use the following equivalent form of $\mathrm{MA}^+(\text{$\omega_1$-closed})$, introduced by Miyamoto \cite{miyamoto:_faandcc}. 

\begin{dfn}\normalfont\label{dfn:fa}
$\mathrm{FA}^*(\text{$\omega_1$-closed})$ denotes the following statement: For any $\omega_1$-closed poset $\mathbb{P}$, any family of dense subsets $\seq{D_i\mid i<\omega_1}$, any regular cardinal $\theta\geq(2^{2^{|\mathrm{TC}(\mathbb{P})|}})^+$, any structure $\mathfrak{A}=\seq{H_\theta, \in, \{\mathbb{P}\}, \ldots}$, any countable $N\prec\mathfrak{A}$ and any $(N, \mathbb{P})$-generic condition $p\in\mathbb{P}$, there exists a directed subset $F$ of $\mathbb{P}$ of size at most $\aleph_1$ satisfying the following:
\begin{enumerate}[(i)]
\item $F\cap D_i\not=\emptyset$ for every $i<\omega_1$.\label{fdi}
\item $q\leq_\mathbb{P} p$ holds for some $q\in F$.\label{qpp}
\item $N\prec_{\omega_1}N(F)=\{g(F)\mid g:\mathcal{P}(\mathbb{P})\to H_\theta\land g\in N\}$.\label{nnf}
\end{enumerate}
\end{dfn}
\begin{lma}[Miyamoto and Usuba]\normalfont\label{lma:fastar}
$\mathrm{FA}^*(\text{$\omega_1$-closed})$ is equivalent to $\mathrm{MA}^+(\text{$\omega_1$-closed})$.
\end{lma}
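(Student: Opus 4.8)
The plan is to prove both implications, the easier being $\mathrm{FA}^*(\text{$\omega_1$-closed})\imply\mathrm{MA}^+(\text{$\omega_1$-closed})$ and the harder its converse. Throughout write $\delta=N\cap\omega_1$, and I would begin with a structural observation that streamlines clause (\ref{nnf}): the set $N(F)=\{g(F)\mid g\in N,\ g:\mathcal{P}(\mathbb{P})\to H_\theta\}$ is exactly the Skolem hull $\mathrm{Sk}^{H_\theta}(N\cup\{F\})$, so that, provided $\mathfrak{A}$ carries a well-ordering of $H_\theta$, a Tarski--Vaught argument using the $N$-definable functions $y\mapsto t^{H_\theta}(\vec a,y)$ (for Skolem terms $t$ and $\vec a\in N$) shows $N(F)\prec H_\theta$. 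Since constant functions witness $N\subseteq N(F)$, the relation $N\prec N(F)$ then holds automatically, and clause (\ref{nnf}) reduces to the single demand $N(F)\cap\omega_1=\delta$ (note $N(F)\cap\omega_1$ is an ordinal because $\omega\subseteq N(F)$).

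For the forward direction, suppose $\mathrm{FA}^*(\text{$\omega_1$-closed})$ and let $\mathbb{P}$ be $\omega_1$-closed, $\seq{D_i\mid i<\omega_1}$ dense, and $\dot S$ a $\mathbb{P}$-name with $\force_\mathbb{P}$``$\dot S\subseteq\omega_1$ is stationary''. Since $\omega_1$-closed posets preserve stationary subsets of $\omega_1$, the standard characterization of stationary preservation by generic conditions yields a countable $N\prec\mathfrak{A}$ (with $\dot S$, $\seq{D_i}$ among the predicates of $\mathfrak{A}$) and an $(N,\mathbb{P})$-generic $p$ forcing $\check\delta\in\dot S$. Applying $\mathrm{FA}^*$ to $\mathbb{P}$, $\seq{D_i}$, $N$ and $p$ produces a directed $F$ meeting every $D_i$, containing some $q\leq_\mathbb{P}p$, and satisfying $N\prec_{\omega_1}N(F)$. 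Then $q\force_\mathbb{P}\check\delta\in\dot S$, so $\delta\in S_F:=\{\alpha\mid\exists r\in F\ r\force_\mathbb{P}\check\alpha\in\dot S\}$. To see $S_F$ is stationary, note that $S_F$ is definable in $N(F)$ from $F,\dot S,\mathbb{P}$; were it nonstationary there would be a club $C\in N(F)$ disjoint from it, and since $C\cap N(F)$ is cofinal in $\delta=N(F)\cap\omega_1$ and $C$ is closed we get $\delta\in C\cap S_F$, a contradiction. This is precisely $\mathrm{MA}^+(\text{$\omega_1$-closed})$ for $\dot S$.

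For the converse, assume $\mathrm{MA}^+(\text{$\omega_1$-closed})$ and let $\mathbb{P}$, $\seq{D_i\mid i<\omega_1}$, $\theta$, $\mathfrak{A}$, $N$ and an $(N,\mathbb{P})$-generic $p$ be given; passing to $\mathbb{P}\restriction p$ I may fold clause (\ref{qpp}) into the demand that $F$ be a filter. I would build a continuous $\in$-increasing chain $\seq{N_\xi\mid\xi<\omega_1}$ of countable elementary submodels of $\mathfrak{A}$ with $N_0=N$, $\seq{N_\zeta\mid\zeta\leq\xi}\in N_{\xi+1}$, and heights $\delta_\xi=N_\xi\cap\omega_1$ strictly increasing and continuous; then enlarge the dense-set family by all dense subsets of $\mathbb{P}$ lying in $\bigcup_\xi N_\xi$ (still $\aleph_1$ many). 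I would form the $\mathbb{P}$-name $\dot S$ for the set of heights the generic computes correctly, namely $\force_\mathbb{P}$``$\dot S=\{\delta_\xi\mid N_\xi[\dot G]\cap\omega_1=\delta_\xi\}$''. Since $\mathbb{P}$ is $\omega_1$-closed it adds no new countable sequences, so $\{\xi\mid N_\xi[\dot G]\cap\omega_1=\delta_\xi\}$ is forced to contain a club and hence $\force_\mathbb{P}$``$\dot S$ is stationary''. Applying $\mathrm{MA}^+$ to $\mathbb{P}$, the augmented family and $\dot S$ yields a directed $F$ of size $\leq\aleph_1$ meeting every $D_i$ with $S_F$ stationary, so that clauses (\ref{fdi}) and (\ref{qpp}) hold at once.

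The hard part will be extracting clause (\ref{nnf}), i.e.\ $N(F)\cap\omega_1=\delta$, from the stationarity of $S_F$. The difficulty is genuine: the naive ``upward'' reading only controls the models $N_\xi$ whose heights are realized, and since $N_0(F)\subseteq N_\xi(F)$ the correctness of $N_\xi$ propagates \emph{upward} but not down to the given model $N=N_0$, so merely knowing that the good set $\{\xi\mid N_\xi(F)\cap\omega_1=\delta_\xi\}$ is closed and unbounded does not pin $N_0$. The argument must therefore run contrapositively: one shows that if the term model computes $\omega_1$ incorrectly, say $N(F)\cap\omega_1\ni\beta\geq\delta$ witnessed by some $h\in N$ with $h(F)=\beta$, then the realization $S_F$ is forced to be non-stationary, contradicting the ``$+$''. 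Making this work requires choosing the name $\dot S$ and the surrounding bookkeeping carefully --- coordinating an enumeration of the $N$-definable functions into $\omega_1$, the $(N_\xi,\mathbb{P})$-generic (master) conditions, and the reflection of $S_F$ into $N(F)$ --- so that the stationary realization detects and forbids the collapse. I expect this verification to be the crux of the proof; once it is in hand, the equivalence is complete.
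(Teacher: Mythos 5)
The first thing to note is that the paper itself contains no proof of this lemma: it is quoted as a result of Miyamoto and Usuba, with the proof deferred to Usuba \cite{usuba:_notesonmiyamotofa}. So your proposal can only be judged on its own mathematical content, and there it has a genuine gap, one that you yourself flag. In the direction $\mathrm{MA}^+(\text{$\omega_1$-closed})\Rightarrow\mathrm{FA}^*(\text{$\omega_1$-closed})$ you never establish clause~(\ref{nnf}), i.e.\ $N(F)\cap\omega_1=\delta$. Everything you actually carry out --- restricting to $\mathbb{P}\restrict p$, building the chain $\seq{N_\xi\mid\xi<\omega_1}$, forming the name $\dot S$ for $\{\delta_\xi\mid N_\xi[\dot G]\cap\omega_1=\delta_\xi\}$, and checking that $\dot S$ is forced to contain a club --- yields only clauses~(\ref{fdi}) and~(\ref{qpp}), which are the routine part. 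The crux, namely that stationarity of $S_F$ forces the hull $N(F)$ to compute $\omega_1$ correctly at the bottom model $N=N_0$, is exactly what you defer (``once it is in hand, the equivalence is complete''), and your own analysis shows why the $\dot S$ you chose cannot obviously deliver it: realized correctness propagates up the chain but not down to $N_0$, and $F$ is merely a directed set rather than a generic filter, so conditions in $F$ forcing facts about $N_\xi[\dot G]$ say nothing directly about the value $g(F)$ of an arbitrary $g\in N$ applied to the actual set $F$. The contrapositive plan (``if $g(F)=\beta\geq\delta$ then $S_F$ is nonstationary'') is stated as a goal, not proved; nothing in your setup connects $g(F)$ to a club avoiding $S_F$. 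The hard half of the equivalence is therefore missing.

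By contrast, your forward direction ($\mathrm{FA}^*\Rightarrow\mathrm{MA}^+$) is essentially sound: $N(F)$ is indeed elementary in $H_\theta$ (though you do not need a well-ordering in $\mathfrak{A}$ for this --- collection and choice inside $H_\theta$ together with elementarity of $N$ suffice for the Tarski--Vaught argument), one has $F, S_F\in N(F)$, and the reflection argument at $\delta=N(F)\cap\omega_1$ is the right one. The one step you should not wave off as ``standard'' is the existence of a countable $N\prec\mathfrak{A}$ and an $(N,\mathbb{P})$-generic $p$ with $p\force\check\delta\in\dot S$: for general proper or stationary-set-preserving posets an arbitrary condition need not extend to an $(N,\mathbb{P})$-generic one, so this step genuinely uses $\omega_1$-closedness (pass to $V[G]$, pick a countable $M\prec H_\theta^{V[G]}$ with $M\cap\omega_1\in\dot S_G$, observe that $G\cap M\in V$ because no new $\omega$-sequences are added, and take a lower bound in $V$ of $G\cap M$ together with a condition of $G$ forcing $\check{(M\cap\omega_1)}\in\dot S$). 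With that filled in, the easy direction is complete; the equivalence as a whole is not.
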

See Usuba \cite{usuba:_notesonmiyamotofa} for a proof of Lemma \ref{lma:fastar}.

\begin{lma}\normalfont\label{lma:FAstrong}
Assume $\mathrm{FA}^*(\text{$\omega_1$-closed})$. Let $\mathbb{P}$, $\seq{D_i\mid i<\omega_1}$, $\theta$, $\mathfrak{A}$, $N$ and $p$ are as in Definition \ref{dfn:fa}. If $\mathbb{P}$ collapses $\omega_2$, then $F$ in the conclusion of Definition \ref{dfn:fa} can be taken so that $N\prec^s_{\omega_1}N(F)$ holds (Recall that the notation $\prec^s_\delta$ for $\delta<\omega_2$ was introduced two paragraphs before Definition \ref{dfn:strgen}).
\end{lma}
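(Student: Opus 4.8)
The plan is to apply $\mathrm{FA}^*(\text{$\omega_1$-closed})$ not to the given family $\seq{D_i\mid i<\omega_1}$ alone, but to that family enlarged by one carefully chosen extra dense set whose purpose is to force $F$ to decide a value of a collapsing function above $\sup(N\cap\omega_2)$. First, since $\mathbb{P}\in N$ and $\mathbb{P}$ collapses $\omega_2$ while preserving $\omega_1$ (being $\omega_1$-closed), by elementarity $N$ contains a $\mathbb{P}$-name $\dot h$ for a surjection $\dot h:\omega_1\onto\omega_2$. Put $\eta:=\sup(N\cap\omega_2)$; since $N$ is countable we have $\eta<\omega_2$, and a short argument shows $\eta\notin N$ (otherwise $\eta\in N\cap\omega_2\subseteq\eta$, a contradiction).

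Next I would set $E:=\{q\in\mathbb{P}\mid\exists i<\omega_1\,\exists\alpha\in[\eta,\omega_2)\,[q\force\dot h(i)=\alpha]\}$. Because $\dot h$ is forced to be surjective, below any condition there is a stronger one deciding $\dot h$ to take some prescribed value $\geq\eta$, so $E$ is dense. Note that $E$ refers to the parameter $\eta\notin N$ and hence need not belong to $N$, but this is harmless, since $\mathrm{FA}^*(\text{$\omega_1$-closed})$ places no requirement that the dense sets lie in $N$. Apply $\mathrm{FA}^*(\text{$\omega_1$-closed})$ (with the same $\theta$, $\mathfrak{A}$, $N$, $p$) to the family obtained by adjoining $E$ to $\seq{D_i\mid i<\omega_1}$, reindexed as an $\omega_1$-sequence. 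This yields a directed $F$ of size at most $\aleph_1$ meeting every $D_i$ and meeting $E$, containing some $q\leq_\mathbb{P}p$, and with $N\prec_{\omega_1}N(F)$; in particular conditions (\ref{fdi})--(\ref{nnf}) of Definition \ref{dfn:fa} already hold, and it remains only to verify the strict inclusion $N\cap\omega_2\subsetneq N(F)\cap\omega_2$.

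For that, define $g:\mathcal{P}(\mathbb{P})\to H_\theta$ by $g(X)=\sup\{\alpha<\omega_2\mid\exists i<\omega_1\,\exists q\in X\,[q\force\dot h(i)=\alpha]\}$. Since $g$ is definable from $\dot h,\mathbb{P},\omega_2\in N$, we have $g\in N$, so $g(F)\in N(F)$. Because $F$ meets $E$, some member of $F$ decides a value of $\dot h$ that is $\geq\eta$, whence $g(F)\geq\eta$. On the other hand, since $F$ is directed it decides at most one value of each $\dot h(i)$, so the set of decided values has size at most $\aleph_1$; as $\omega_2$ is regular, this forces $g(F)<\omega_2$. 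Thus $g(F)\in N(F)\cap\omega_2$ with $g(F)\geq\eta$, so $g(F)\notin N$ (every element of $N\cap\omega_2$ is $<\eta$). Hence $N\cap\omega_2\subsetneq N(F)\cap\omega_2$, which together with $N\prec_{\omega_1}N(F)$ gives $N\prec^s_{\omega_1}N(F)$, as required.

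The one delicate point is the tension between making the new ordinal \emph{readable} from $F$ by a function in $N$ (so that it lands in $N(F)$) and making it provably \emph{outside} $N$: the external threshold $\eta$ cannot itself appear in the reading function $g$, which is why $g$ is taken to be the supremum of \emph{all} values decided by its argument --- a quantity definable from parameters in $N$ --- while the extra dense set $E$ does the work of pushing that supremum above $\eta$, and the cardinality bound (together with the regularity of $\omega_2$) keeps it below $\omega_2$. I expect this balancing act, rather than any single computation, to be the substantive part of the argument.
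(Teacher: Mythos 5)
Your proof is correct and is essentially the paper's own argument: both rest on fixing a collapsing name $\dot h\in N$, the reading function $g(X)=\sup\{\alpha<\omega_2\mid\exists i<\omega_1\,\exists q\in X\,[q\force\dot h(\check i)=\check\alpha]\}$ (which lies in $N$ by elementarity), and the size bound on $F$ together with the regularity of $\omega_2$ to produce an ordinal $g(F)\in N(F)\cap\omega_2$ that is at least $\sup(N\cap\omega_2)$ and hence outside $N$. The only difference is the device used to guarantee that some member of $F$ decides a large value of $\dot h$: the paper first strengthens $p$ to a condition $p'$ forcing $\dot h(\check i)=\check\alpha$ for some $\alpha\geq\sup(N\cap\omega_2)$ and applies $\mathrm{FA}^*(\text{$\omega_1$-closed})$ to $p'$ (implicitly using that extensions of $(N,\mathbb{P})$-generic conditions remain $(N,\mathbb{P})$-generic), whereas you keep $p$ and adjoin the external dense set $E$ to the family, correctly observing that Definition \ref{dfn:fa} does not require the dense sets to belong to $N$ --- both devices are equally sound.
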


\proof Pick a $\mathbb{P}$-name $\dot{f}\in N$ for a map from $\omega_1$ onto $\omega^V_2$, and choose $p'\leq_\mathbb{P}p$ so that there exists $\alpha<\omega_2$ with $\alpha\geq\sup(N\cap\omega_2)$ and $i<\omega_1$ such that $p'\force_{\mathbb{P}}\text{\lq\lq$\dot{f}(\check{i})=\check{\alpha}$.\rq\rq}$ Now apply $\mathrm{FA}^*(\text{$\omega_1$-closed})$ for $p'$ instead of $p$ to obtain $F$. Then $F$ also satisfies (\ref{qpp}) for $p$. Note that
$$
\beta:=\sup\{\eta<\omega_2\mid\exists q\in F\exists i<\omega_1[q\force_{\mathbb{P}}\text{\lq\lq$\dot{f}(\check{i})=\check{\eta}$\rq\rq}]\}
$$
satisfies $\sup(N\cap\omega_2)\leq\alpha\leq\beta<\omega_2$ and $\beta\in N(F)$, and therefore we have $N\prec^s_{\omega_1}N(F)$.\qed

\smallskip
We will also use the following auxiliary poset which is induced from a given $*$-tactically closed poset.

\begin{dfn}\normalfont\label{dfn:posetr}
Let $\mathbb{P}$ be a $*$-tactically closed poset, and fix a winning $*$-tactic $\sigma$ for $G^*(\mathbb{P})$.
We define a poset $\mathbb{R}=\mathbb{R}(\mathbb{P}, \sigma)$ as follows: A condition $P$ of $\mathbb{R}$ is either the empty sequence $1_\mathbb{R}=\seq{}$, or of the form $P=\seq{A^P_\xi\mid\xi\leq\alpha^P}$ ($\alpha^P<\omega_1$) satisfying the following:
\begin{enumerate}[(a)]
\item $P$ is a $\subseteq$-continuous increasing sequence of elements of $[\mathbb{P}]^{\leq\omega}_+$.
\item $\sigma(A^P_\xi)\geq_{\mathcal{B}(\mathbb{P})}\bigwedge A^P_{\xi+1}$ for every $\xi<\alpha^P$.
\end{enumerate}
$\mathbb{R}$ is ordered by initial segment.
Note that $P=\seq{A^P_\xi\mid\xi\leq\alpha^P}$ is an $\mathbb{R}$-condition if and only if
$$
\begin{matrix}
\mathrm{I}: & A^P_0 & A^P_1 & \cdots &\\
\mathrm{II}: & \hspace{36pt} \sigma(A^P_0) & \hspace{36pt} \sigma(A^P_1) & \cdots & \sigma(A^P_{\alpha^P})
\end{matrix}
$$
forms a part of a play of $G^*(\mathbb{P})$ where Player $\mathrm{II}$ plays according to $\sigma$. Therefore it is clear that $\mathbb{R}$ is $\omega_1$-closed. In fact, whenever $\seq{P_n\mid n<\omega}$ is a strictly descending sequence of $\mathbb{R}$-conditions, $\bigcup\{P_n\mid n<\omega\}$ is of the form $\seq{A_\xi\mid\xi<\gamma}$ for some limit $\gamma<\omega_1$, and letting $A_\gamma=\bigcup\{A_\xi\mid\xi<\gamma\}$ we have $A_\gamma\in[\mathbb{P}]^{\leq\omega}_+$ because $\sigma$ is a winning $*$-tactic, and $P_\omega=\seq{A_\xi\mid\xi\leq\gamma}$ is the greatest common extension of $\{P_n\mid n<\omega\}$. For $P=\seq{A^P_\xi\mid\xi\leq\alpha^P}\in\mathbb{R}$ we denote $A^P_{\alpha^P}$ simply as $A^P$. We define $\pi:\mathbb{R}\to\mathbb{P}$ by $\pi(1_\mathbb{R})=1_\mathbb{P}$ and $\pi(P)=\sigma(A^P)$ for other $P\in\mathbb{R}$. It is easy to check that $\pi:\mathbb{R}\to\mathbb{P}$ is a projection. We also denote $\alpha^P$ as $\mathrm{lh}(P)$.
\end{dfn}

\begin{lma}\normalfont\label{density}
Let $\mathbb{P}$, $\sigma$ and $\mathbb{R}$ be as above. Then the following subsets of $\mathbb{R}$ are dense and open in $\mathbb{R}$.
\begin{enumerate}[(1)]
  \item $D^\mathrm{lh}_\alpha=\{P\in\mathbb{R}\mid \mathrm{lh}(P)\geq\alpha\}$ for $\alpha<\omega_1$.\label{density:length}
  \item $E_a=\{P\in\mathbb{R}\mid a\in A^P\lor a\perp_{\mathbb{P}}\pi(P)\}$ for $a\in\mathbb{P}$.\label{density:elements}
 \end{enumerate}
\end{lma}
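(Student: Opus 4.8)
The plan is to treat openness and density separately for each of the two families, handling openness first since it is immediate from two features of $\mathbb{R}$: that it is ordered by initial segment and that $\pi$ is order-preserving (being a projection, as established in Definition \ref{dfn:posetr}). Suppose $Q\leq_{\mathbb{R}}P$. Then $\mathrm{lh}(Q)\geq\mathrm{lh}(P)$ and $A^P\subseteq A^Q$, and moreover $\pi(Q)\leq_{\mathbb{P}}\pi(P)$. The length inequality shows at once that each $D^{\mathrm{lh}}_\alpha$ is open. For $E_a$: if $a\in A^P$ then $a\in A^Q$; and if $a\perp_{\mathbb{P}}\pi(P)$, then any common extension of $a$ and $\pi(Q)$ would, via $\pi(Q)\leq_{\mathbb{P}}\pi(P)$, also extend $\pi(P)$, so $a\perp_{\mathbb{P}}\pi(Q)$. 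Hence $Q\in E_a$ in either case, and $E_a$ is open.

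For the density of $D^{\mathrm{lh}}_\alpha$ I would rely on the $\omega_1$-closedness of $\mathbb{R}$ recorded in Definition \ref{dfn:posetr}, together with a one-step extension. Given $P\in\mathbb{R}$, appending the entry $A^P\cup\{\sigma(A^P)\}$ yields a strictly longer condition below $P$: this set lies in $[\mathbb{P}]^{\leq\omega}_+$ (with common extension $\sigma(A^P)$) and its infimum is $\sigma(A^P)$, so condition (b) of Definition \ref{dfn:posetr} holds at the new step, with equality. Iterating this single-step extension and using $\omega_1$-closedness to pass limit stages — where the fact that $\sigma$ is a \emph{winning} $*$-tactic guarantees the union of the entries remains in $[\mathbb{P}]^{\leq\omega}_+$ — produces, for any prescribed $\alpha<\omega_1$, an extension of $P$ of length at least $\alpha$.

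The density of $E_a$ is where the only genuine computation lies. Fix $P\in\mathbb{R}$. If $a\perp_{\mathbb{P}}\pi(P)$ then $P$ itself already belongs to $E_a$ and there is nothing to do. Otherwise $a$ is compatible with $\pi(P)=\sigma(A^P)$, and I would fix a common extension $b$ of $a$ and $\sigma(A^P)$ and append the entry $A^P\cup\{a,b\}$. The point of adjoining the witness $b$, rather than $a$ alone, is precisely to verify condition (b): since $\sigma(A^P)$ is a common extension of $A^P$, we have $b\leq_{\mathbb{P}}a$ and $b\leq_{\mathbb{P}}\sigma(A^P)\leq_{\mathbb{P}}c$ for every $c\in A^P$, so $b$ is the infimum of $A^P\cup\{a,b\}$; consequently $\bigwedge(A^P\cup\{a,b\})=b\leq_{\mathbb{P}}\sigma(A^P)$, which is exactly condition (b) at the new step. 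The resulting $Q$ is an extension of $P$ of length $\mathrm{lh}(P)+1$ with $a\in A^Q$, so $Q\in E_a$; the degenerate case $P=1_{\mathbb{R}}$ is handled by taking $Q=\seq{\{a\}}$.

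The main obstacle, modest as it is, is this verification of condition (b) for $E_a$: simply throwing $a$ into $A^P$ need not push the infimum below $\sigma(A^P)$, and one must instead adjoin a compatibility witness $b\leq_{\mathbb{P}}a,\sigma(A^P)$ in order to force $\bigwedge(A^P\cup\{a,b\})\leq_{\mathbb{P}}\sigma(A^P)$. Everything else is bookkeeping with the initial-segment order and the order-preservation of $\pi$.
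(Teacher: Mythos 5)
Your proof is correct, and since the paper disposes of this lemma with the single word ``Straightforward,'' your argument is exactly the intended one spelled out: openness from the initial-segment ordering and order-preservation of $\pi$, one-step extensions plus $\omega_1$-closedness for $D^{\mathrm{lh}}_\alpha$, and the compatibility witness for $E_a$. Your key observation --- that one must adjoin a common extension $b$ of $a$ and $\sigma(A^P)$ rather than $a$ alone, so that $\bigwedge(A^P\cup\{a,b\})=b\leq_{\mathbb{P}}\sigma(A^P)$ verifies clause (b) --- is precisely the trick the paper itself uses in its explicit proof of the analogous density statement, Lemma \ref{lma:dadense} in \S\ref{sec:prePFA}.
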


\proof Straightforward.\qed

\begin{lma}\normalfont\label{collapse}
Let  $\mathbb{P}$, $\sigma$ and $\mathbb{R}$ be as in Definition \ref{dfn:posetr}. If $\mathbb{P}$ is non-atomic, then $\mathbb{R}$ collapses $\omega_2$.
\end{lma}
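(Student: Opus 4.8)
The plan is to read the collapse directly off the generic branch, by showing that the induced $\mathbb{P}$-generic filter, which in the ground extension has size $\aleph_2$, is reassembled in $V^{\mathbb{R}}$ as an increasing union of $\omega_1$ countable pieces. Fix $\sigma,\mathbb{R},\pi$ as in Definition \ref{dfn:posetr} and let $G$ be $\mathbb{R}$-generic over $V$. By density of the sets $D^{\mathrm{lh}}_\alpha$ (Lemma \ref{density}(\ref{density:length})), $\bigcup G$ is a branch of full length $\omega_1$, say $\bigcup G=\seq{A_\xi\mid\xi<\omega_1}$, and $H:=\pi_*(G)$ is the induced $\mathbb{P}$-generic filter over $V$, so that $V[H]\subseteq V^{\mathbb{R}}$. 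The first, and solid, step is the identity of sets of conditions $H=\bigcup_{\xi<\omega_1}A_\xi$. Indeed, if $a\in A_\xi$ then $\sigma(A_\xi)\leq_{\mathcal{B}(\mathbb{P})}\bigwedge A_\xi\leq a$, so $\sigma(A_\xi)\leq_{\mathbb{P}}a$ and hence $a\in H$ by upward closure; conversely, by density of the $E_a$ (Lemma \ref{density}(\ref{density:elements})) and genericity, for each $a\in\mathbb{P}$ either some $A_\xi$ contains $a$ or some $\sigma(A_\xi)\perp_{\mathbb{P}}a$, and since $a\in H$ is compatible with every $\sigma(A_\xi)\in H$ the latter is excluded, forcing $a\in A_\xi$ for some $\xi$. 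As each $A_\xi$ is countable, this yields immediately that $|H|\leq\aleph_1$ in $V^{\mathbb{R}}$.

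Thus it suffices to produce, in $V$, a set of size $\aleph_2$ that $\mathbb{R}$ forces to have size $\leq\aleph_1$, and the natural candidate is $H$ itself. If $\mathbb{P}$ already collapses $\omega_2$ then so does $\mathbb{R}$, since $V[H]\subseteq V^{\mathbb{R}}$ and $\mathbb{R}$, being $\omega_1$-closed, preserves $\omega_1$; so we may assume $\mathbb{P}$ preserves all cardinals $\leq\omega_2$. Everything then reduces to the claim $(\ast)$: in $V[H]$ the generic filter $H$ has cardinality at least $\aleph_2$. Granting $(\ast)$, since $|H|\leq\aleph_1$ in $V^{\mathbb{R}}$ while $|H|^{V[H]}\geq\aleph_2=\omega_2^V$ and $\omega_1$ is preserved, $\omega_2^V$ is collapsed in $V^{\mathbb{R}}$, which is exactly the conclusion.

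The step $(\ast)$ is where I expect the real work, and it is the point at which non-atomicity is indispensable: an atom below some condition trivialises the game there and would allow $H$ to be countable, so a hypothesis of this kind is clearly necessary. The content of $(\ast)$ is to convert non-atomicity, in the presence of the winning $*$-tactic $\sigma$, into a lower bound on the size of the generic filter of an $\omega_2$-preserving poset. I would attack it by reflection: assuming $|H|^{V[H]}\leq\aleph_1$, fix in $V[H]$ a $\leq_{\mathbb{P}}$-decreasing cofinal enumeration $\seq{h_\eta\mid\eta<\omega_1}$ generating $H$ (non-atomicity lets one descend strictly), and fix in $V$ a countable $N\prec\seq{H_\theta,\in}$ containing $\mathbb{P}$, $\sigma$ and a name for this enumeration. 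The aim is then to play $\sigma$ against an $(N,\mathbb{P})$-generic sequence (Lemma \ref{lma:gseqstrcl}, Definition \ref{dfn:strgen}) so as to build, below any prescribed condition, a descent through the positions $\sigma(A_\xi)$ that the reflected enumeration $\seq{h_\eta}$ cannot match while remaining cofinal in a $V$-generic filter, yielding a contradiction.

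Making this final reconciliation precise is the delicate core of the argument, and it is exactly where both hypotheses on $\mathbb{P}$ are used: the branching guaranteed by non-atomicity must be played off against the $\omega_1$-indexed reflected enumeration, while the memory-free bookkeeping of the $*$-tactic is what keeps every intermediate position nonzero and hence keeps the constructed descent legal. I therefore expect the routine parts — the set identity $H=\bigcup_{\xi}A_\xi$ and the reduction to the $\omega_2$-preserving case — to be quick, and the cardinality lower bound $(\ast)$ to absorb essentially all the difficulty.
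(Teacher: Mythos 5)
Your overall architecture is sound, but the proof has an admitted hole exactly where the mathematical content of the lemma sits. The two reductions you carry out are correct: the identity $H=\pi_*(G)=\bigcup_{\xi<\omega_1}A_\xi$ follows from Lemma \ref{density} as you argue, so $|H|\leq\aleph_1$ in $V[G]$ (using that $\mathbb{R}$ is $\omega_1$-closed, hence preserves $\omega_1$); and granting $(\ast)$, composing the injection $\omega_2^V\to H$ of $V[H]$ with the injection $H\to\omega_1$ of $V[G]$ does collapse $\omega_2^V$. But $(\ast)$ itself is never proved: what you give is a plan (``play $\sigma$ against an $(N,\mathbb{P})$-generic sequence so as to build a descent the reflected enumeration cannot match''), and you say yourself that making this reconciliation precise is the delicate core. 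That core is the lemma, so as written the proposal does not establish it. Two smaller remarks: your case distinction is vacuous, since a $*$-tactically closed poset is $(\omega_1+1)$-strategically closed and hence can never collapse $\omega_2$; and it is not the mere existence of a strictly descending cofinal $\omega_1$-enumeration of $H$ that matters, but what one does with it.

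The good news is that $(\ast)$ is true and can be filled far more cheaply than your sketch suggests, with no elementary-submodel machinery at all. Since $\mathbb{P}$ is $(\omega_1+1)$-strategically closed, it is $\omega_1$-distributive: every function $f:\omega_1\to V$ lying in $V[H]$ lies in $V$. Now suppose $|H|^{V[H]}\leq\aleph_1$. As $\omega_1$ is preserved, there is a surjection $f:\omega_1\to H$ in $V[H]$; by distributivity $f\in V$, hence $H\in V$. But for non-atomic $\mathbb{P}$, the set $\mathbb{P}\setminus F$ is dense for every filter $F\in V$ (split any $p\in F$ into incompatible $q_1,q_2\leq p$; a filter cannot contain both), so the generic filter $H$ cannot equal any $F\in V$ --- contradiction. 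This is exactly where non-atomicity enters, confirming your intuition that it is indispensable, and it completes your argument. Note that this route is genuinely different from the paper's proof, which never looks at $\pi_*(G)$: there one observes that $\mathbb{R}$ is a tree of height $\omega_1$ in which every node has at least $\aleph_2$ immediate successors (because non-atomicity plus strategic closure give each $\mathbb{P}$-condition $2^{\aleph_1}\geq\aleph_2$ pairwise incompatible extensions), and then codes a surjection from $\omega_1$ onto $\omega_2^V$ along the generic branch. The paper's argument is self-contained combinatorics on $\mathbb{R}$; yours, once repaired as above, trades that coding for a distributivity-plus-genericity argument about the projected filter.
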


\proof Note first that $\mathbb{R}$ is a tree of height $\omega_1$. Since $\mathbb{P}$ is non-atomic and $(\omega_1+1)$-strategically closed, each condition of $\mathbb{P}$ has at least $2^{\aleph_1}\geq\aleph_2$ pairwise incompatible extensions. This implies that each node of $\mathbb{R}$ has at least $\aleph_2$ distinct immediate successors. Using this fact one can let each node of $\mathbb{R}$ code a function from a countable ordinal to $\omega_2$, so that each $\mathbb{R}$-generic filter codes a function from $\omega_1$ to $\omega^V_2$, which is surjective by genericity .\qed

\proof[Proof of Theorem \ref{thm:CCholds}] Since $\mathrm{MA}^+(\text{$\omega_1$-closed})$ implies $\mathrm{CC}$, as shown in Foreman, Magidor and Shelah \cite{foreman88}, we may assume that $\mathbb{P}$ is non-atomic. Let $\sigma$ and $\mathbb{R}$ be as in Definition \ref{dfn:posetr}. Let $\theta=(2^{2^{|\mathrm{TC}(\mathbb{P})|}})^+$ (and thus $\sigma$, $\mathbb{R}\in H_\theta$). Let $\dot{F}\in H_\theta$ be any $\mathbb{P}$-name for a function $[\omega_2]^{<\omega}\to\omega_2$ and let $\mathfrak{A}=\seq{H_\theta, \in, \{\mathbb{P}, \sigma, \mathbb{R}, \dot{F}\}}$. It is enough to show that, for any $p\in\mathbb{P}$, there exist $q\leq_\mathbb{P}p$ and $N\prec\mathfrak{A}$ such that $N\cap\omega_1<\omega_1$, $|N\cap\omega_2|=\aleph_1$ and that $q$ is $(N, \mathbb{P})$-generic.

By recursion we will simultaneously define $p_\gamma\in\mathbb{P}$ for $\gamma\in\omega_1\setminus\mathrm{Lim}$, a countable elementary substructure $N_\gamma$ of $\mathfrak{A}$, $A_\gamma\in[\mathbb{P}]^{\leq\omega}_+$ and $P_\gamma\in\mathbb{R}$ respectively for $\gamma<\omega_1$ so that the following requirements are satisfied: For each $\xi<\omega_1$,
\begin{enumerate}[(i)]
\item $\sigma(A_\zeta)\geq_\mathbb{P}p_\xi$ if $\xi=\zeta+1$.\label{Pgeqp}
\item $N_\zeta\prec^s_{\omega_1}N_\xi$ if $\xi=\zeta+1$.\label{Ninc}
\item $N_\xi=\bigcup\{N_\zeta\mid\zeta<\xi\}$ if $\xi\in\mathrm{Lim}$.\label{Nconti}
\item $p_\xi\in A_\xi$ if $\xi\notin\mathrm{Lim}$.\label{pinA}
\item $A_\zeta\subseteq A_\xi$ if $\xi=\zeta+1$.\label{Ainc}
\item $A_\xi=\bigcup\{A_\zeta\mid\zeta<\xi\}$ if $\xi\in\mathrm{Lim}$.\label{Aconti}
\item $P_\xi$ is the greatest common extension of an $(N_\xi, \mathbb{R})$-generic sequence (and thus is $(N_\xi, \mathbb{R})$-strongly generic).\label{Pgen}
\item $A_\xi=A^{P_\xi}$.\label{AAP}
\item $A_\xi\subseteq N_\xi$.\label{AcontdN}
\end{enumerate}

Let $\gamma<\omega_1$ and suppose $\seq{p_\xi\mid\xi\in\gamma\setminus\mathrm{Lim}}$, $\seq{N_\xi\mid\xi<\gamma}$, $\seq{A_\xi\mid\xi<\gamma}$ and $\seq{P_\xi\mid\xi<\gamma}$ are already defined and satisfy (i)--(ix) for every $\xi<\gamma$. We will define $p_\gamma$ (if $\gamma\notin\mathrm{Lim}$), $N_\gamma$, $A_\gamma$ and $P_\gamma$ so that (i)--(ix) for $\xi=\gamma$ hold.

\smallskip
\noindent\underline{Case 1} $\gamma=0$.

First let $p_0=p$, and pick a countable $N_0\prec\mathfrak{A}$ such that $p_0\in N_0$. Let $\delta=N_0\cap\omega_1$. Now pick an $(N_0, \mathbb{R})$-generic sequence $\seq{P_{0, n}\mid n<\omega}$ such that $P_{0, 0}=\seq{\{p_0\}}$. Now let $P_0$ be the greatest common extension of $\{P_{0, n}\mid n<\omega\}$ and let $A_0:=A^{P_0}$. Then it is easy to see (\ref{pinA}), (\ref{Pgen}), (\ref{AAP}) and (\ref{AcontdN}) for $\xi=0$.

\smallskip
\noindent\underline{Case 2} $\gamma=\zeta+1$.

Since $P_\zeta$ is $(N_\zeta, \mathbb{R})$-generic by (\ref{Pgen}) for $\xi=\zeta$, we can apply Lemma \ref{lma:FAstrong} to $\mathbb{R}$, dense subsets $D^\mathrm{lh}_\alpha$ ($\alpha<\omega_1$), $N_\zeta$ and $P_\zeta$ to get a directed subset $F$ of $\mathbb{R}$ satisfying:
\begin{enumerate}[(1)]
\item $F\cap D^{\mathrm{lh}}_\alpha\not=\emptyset$ for every $\alpha<\omega_1$.\label{FDi}
\item $Q\in F$ for some $Q\leq_\mathbb{R}P_\zeta$.\label{QR}
\item $N_\zeta\prec^s_{\omega_1}N_\zeta(F)$.\label{NprecN}
\end{enumerate}
Now let $N_\gamma:=N_\zeta(F)$. By (\ref{NprecN}) we have (\ref{Ninc}) for $\xi=\gamma$. By (\ref{FDi}) and the directedness of $F$, $\bigcup F$ is of the form $\seq{A^F_i\mid i<\omega_1}$. Note that
$$
\begin{matrix}
\mathrm{I}: & A^F_0 & A^F_1 & \cdots & & A^F_{\omega+1} & \cdots\phantom{,}\\
\mathrm{II}: & \hspace{36pt} \sigma(A^F_0) & \hspace{36pt}  \sigma(A^F_1) & \cdots & \sigma(A^F_\omega) & \hspace{36pt} \sigma(A^F_{\omega+1}) & \cdots
\end{matrix}
$$
forms a play of $G^*(\mathbb{P})$ where Player $\mathrm{II}$ plays according to $\sigma$. Since $\sigma$ is a winning $*$-tactic, $\bigcup\{A^F_i\mid i<\omega_1\}$ has a common extension in $\mathbb{P}$. Since $F\in N_\gamma$, we can pick such a common extension $p_\gamma$ within $N_\gamma$. By (\ref{QR}) $P_\zeta$ is an initial segment of $\bigcup F$. By (\ref{Ninc}) and (\ref{Nconti}) for $\xi\leq\zeta$ we have $N_\zeta\cap\omega_1=\delta$, and thus by Lemma \ref{density}(\ref{density:length}) and (\ref{Pgen}) for $\xi=\zeta$ we have $\mathrm{lh}(P_\zeta)=\delta$ and thus $A^{P_\zeta}=A^F_\delta$ holds. Therefore by (\ref{AAP}) for $\xi=\zeta$ it holds that $\sigma(A_\zeta)=\sigma(A^{P_\zeta})=\sigma(A^F_\delta)\geq_\mathbb{\mathcal{B}(P)}\bigwedge A^F_{\delta+1}\geq_\mathbb{\mathcal{B}(P)}p_\gamma$. This gives (\ref{Pgeqp}) for $\xi=\gamma$.

Now construct $A_\gamma$ and $P_\gamma$ in the same way as $A_0$ and $P_0$: Pick an $(N_\gamma, \mathbb{R})$-generic sequence $\seq{P_{\gamma, n}\mid n<\omega}$ such that $P_{\gamma, 0}=\seq{\{p_\gamma\}}$. Then let $P_\gamma$ be the greatest common extension of $\{P_{\gamma, n}\mid n<\omega\}$ and let $A_\gamma:=A^{P_\gamma}$. Then we have (\ref{pinA}), (\ref{Pgen}), (\ref{AAP}) and (\ref{AcontdN}) for $\xi=\gamma$.

Let $q\in A_\zeta$ be arbitrary. Note that $E_q$ as in Lemma \ref{density}(\ref{density:elements}) is dense open in $\mathbb{R}$ and belongs to $N_\zeta\subseteq N_\gamma$. Thus we have $P_\gamma\in E_q$ by (\ref{Pgen}) for $\xi=\gamma$. So either $q\in A_\gamma$ or $q\perp_{\mathbb{P}}\pi(P_\gamma)$ holds. By (\ref{Pgeqp}) for $\xi=\gamma$, $p_\gamma$ is a common extension of $A_\zeta$ and thus $q\geq_\mathbb{P}p_\gamma$ holds. On the other hand, since $p_\gamma\in A_\gamma$ by (\ref{pinA}) for $\xi=\gamma$ and $\pi(P_\gamma)=\sigma(A_\gamma)$ is a common extension of $A_\gamma$, we have $p_\gamma\geq_{\mathbb{P}}\pi(P_\gamma)$. Thus $q\geq_{\mathbb{P}}\pi(P_\gamma)$ holds and in particular they are compatible. Therefore $q\in A_\gamma$ holds. This shows $A_\zeta\subseteq A_\gamma$, that is (\ref{Ainc}) for $\xi=\gamma$.

\smallskip
\noindent\underline{Case 3} $\gamma\in\mathrm{Lim}$.

Let $N_\gamma:=\bigcup\{N_\zeta\mid\zeta<\gamma\}$ and $A_\gamma:=\bigcup\{A_\zeta\mid\zeta<\gamma\}$. Then by induction hypothesis we immediately have (\ref{Nconti}), (\ref{Aconti}) and (\ref{AcontdN}) for $\xi=\gamma$. So it is enough to find a $P_\gamma$ satisfying (\ref{Pgen}) and (\ref{AAP}) for $\xi=\gamma$. Let $\seq{D_n\mid n<\omega}$ be an enumeration of the dense subsets of $\mathbb{R}$ in $N_\gamma$. Fix a strictly increasing sequence $\seq{\gamma_n\mid n<\omega}$ of ordinals converging to $\gamma$ so that $D_n\in N_{\gamma_n}$ holds for every $n<\omega$.

By induction we construct a descending sequence $\seq{P_{\gamma, n}\mid n<\omega}$ in $\mathbb{R}$ satisfying the following requirements for each $n<\omega$:
\begin{itemize}
\item $P_{\gamma, n}\in N_{\gamma_n}$.
\item $P_{\gamma, n+1}\in D_n$.
\item $\pi(P_{\gamma, n})\geq_{\mathbb{P}}\pi(P_{\gamma_n})$.
\end{itemize}
First let $P_{\gamma, 0}:=1_\mathbb{R}$. It is clear that $P_{\gamma, 0}\in N_{\gamma_0}$ and $\pi(P_{\gamma, 0})=1_\mathbb{P}\geq_{\mathbb{P}}\pi(P_{\gamma_0})$. Suppose $P_{\gamma, n}\in N_{\gamma_n}$ was defined and satisfies $\pi(P_{\gamma, n})\geq_{\mathbb{P}}\pi(P_{\gamma_n})$. Note that since (\ref{Pgen}) for $\xi=\gamma_n$ holds and $\pi$ is a projection, $\pi(P_{\gamma_n})$ is $(N_{\gamma_n}, \mathbb{P})$-strongly generic. Thus since $D_n\in N_{\gamma_n}$, by Lemma \ref{lma:onestep} we can pick $P_{\gamma, n+1}\leq_{\mathbb{R}}P_{\gamma, n}$ such that $P_{\gamma, n+1}\in D_n\cap N_{\gamma_n}$ and that $\pi(P_{\gamma, n+1})\geq_{\mathbb{P}}\pi(P_{\gamma_n})$. But by (\ref{Ninc}) and (\ref{Nconti}) for $\xi<\gamma$ we have $N_{\gamma_n}\subseteq N_{\gamma_{n+1}}$, and by (\ref{Pgeqp}), (\ref{pinA}), (\ref{Ainc}), (\ref{Aconti}) and (\ref{AAP}) for $\xi<\gamma$ we have
\begin{eqnarray*}
\pi(P_{\gamma_n})&=&\sigma(A_{\gamma_n})\geq_{\mathbb{P}}p_{\gamma_n+1}\geq_{\mathcal{B}(\mathbb{P})} \bigwedge A_{\gamma_n+1}\\
&\geq_{\mathcal{B}(\mathbb{P})}&\bigwedge A_{\gamma_{n+1}}\geq_{\mathcal{B}(\mathbb{P})}\sigma(A_{\gamma_{n+1}})=\pi(P_{\gamma_{n+1}}).
\end{eqnarray*}
Therefore we have $P_{\gamma, n+1}\in N_{\gamma_{n+1}}$ and $\pi(P_{\gamma, n+1})\geq_{\mathbb{P}}\pi(P_{\gamma_{n+1}})$, which finishes the construction.

Note that $\seq{P_{\gamma, n}\mid n<\omega}$ is an $(N_\gamma, \mathbb{R})$-generic sequence. Now let $P_\gamma$ be the greatest common extension of $\{P_{\gamma, n}\mid n<\omega\}$. Then $P_\gamma$ is $(N_\gamma, \mathbb{R})$-strongly generic. So it is enough to show the following.

\smallskip
\noindent\underline{Claim} $A^{P_\gamma}=A_\gamma$.
\proof[Proof of Claim] Pick any $q\in A^{P_\gamma}$. Then $q\in A^{P_{\gamma, n}}$ for some $n<\omega$. Since $P_{\gamma, n}\in N_{\gamma_n}$ and $A^{P_{\gamma, n}}$ is countable, $q\in N_{\gamma_n}$. Since $P_{\gamma_n}$ is $(N_{\gamma_n}, \mathbb{R})$-strongly generic, we have $P_{\gamma_n}\in E_q$. Therefore either $q\in A_{\gamma_n}$ or $q\perp_{\mathbb{P}}\pi(P_{\gamma_n})$ holds. But since $q\in A^{P_{\gamma, n}}$ by our construction of $P_{\gamma, n}$ we have
$$
q\geq_{\mathcal{B}(\mathbb{P})}\bigwedge A^{P_{\gamma, n}}\geq_{\mathcal{B}(\mathbb{P})}\pi(P_{\gamma, n})\geq_\mathbb{P}\pi(P_{\gamma_n}),
$$
and therefore $q\in A_{\gamma_n}\subseteq A_\gamma$ must be the case. This shows that $A^{P_\gamma}\subseteq A_\gamma$.

Now pick any $r\in A_\gamma$. Since $A_\gamma\subseteq N_\gamma$, $E_r=D_m$ for some $m<\omega$. By (\ref{Ainc}) and (\ref{Aconti}) for $\xi<\gamma$ we can pick $n>m$ so that $r\in A_{\gamma_n}$ holds. Since $E_r$ is open, $P_{\gamma, m+1}\geq_{\mathbb{R}}P_{\gamma, n}\in E_r$ holds and thus either $r\in A^{P_{\gamma, n}}$ or $r\perp_{\mathbb{P}}\pi(P_{\gamma, n})$ holds. On the other hand, by our construction of $P_{\gamma, n}$ and (\ref{Pgeqp}), (\ref{pinA}), (\ref{Ainc}) and (\ref{AAP}) for $\xi<\gamma$ we have
$$
\pi(P_{\gamma, n})\geq_{\mathbb{P}}\pi(P_{\gamma_n})=\sigma(A_{\gamma_n})\geq_{\mathbb{P}}p_{\gamma_n+1}\in A_{\gamma_n+1}.
$$
Therefore both $r$ and $\pi(P_{\gamma, n})$ are in $A_{\gamma_n+1}$ (by (\ref{Ainc}) for $\xi<\gamma$) and thus are compatible. So $r\in A^{P_{\gamma, n}}\subseteq A^{P_\gamma}$ must be the case. This shows that $A_\gamma\subseteq A^{P_\gamma}$.\qed{(Claim)}

We just have finished our construction of $p_\gamma$'s , $N_\gamma$'s, $A_\gamma$'s and $P_\gamma$'s. Let $N:=\bigcup\{N_\gamma\mid\gamma<\omega_1\}$. By (\ref{Ninc}) and (\ref{Nconti}) we have that $N\cap\omega_1=\delta<\omega_1$ and that $|N\cap\omega_2|=\aleph_1$. By (\ref{Pgeqp}), (\ref{pinA}), (\ref{Ainc}) and (\ref{Aconti}) we have that
$$
\begin{matrix}
\mathrm{I}: & A_0 & A_1 & \cdots & & A_{\omega+1} & \cdots\phantom{,}\\
\mathrm{II}: & \hspace{36pt} \sigma(A_0) & \hspace{36pt}  \sigma(A_1) & \cdots & \sigma(A_\omega) & \hspace{36pt} \sigma(A_{\omega+1}) & \cdots
\end{matrix}
$$
forms a play of $G^*(\mathbb{P})$ where Player $\mathrm{II}$ plays according to $\sigma$. Therefore there exists a common extension $q\in\mathbb{P}$ of $A:=\bigcup\{A_\gamma\mid\gamma<\omega_1\}$. By $p=p_0\in A_0\subseteq A$ we have $q\leq_\mathbb{P}p$. For each $\gamma<\omega_1$, by (\ref{Pgen}) we have $P_\gamma$ is strongly $(N_\gamma, \mathbb{R})$-generic, and since $\pi$ is a projection it follows that $\pi(P_\gamma)=\sigma(A_\gamma)$ is strongly $(N_\gamma, \mathbb{P})$-generic. But $\sigma(A_\gamma)\geq_{\mathcal{B}(\mathbb{P})}\bigwedge A_{\gamma+1}\geq_{\mathcal{B}(\mathbb{P})}q$ holds and thus $q$ is also strongly $(N_\gamma, \mathbb{P})$-generic. Therefore $q$ is $(N, \mathbb{P})$-generic. This completes our proof of Theorem \ref{thm:CCholds}. \qed{(Theorem \ref{thm:CCholds})}

\section{Operations versus $*$-tactics (2): preservation under operationally closed forcing}\label{sec:opstar2}

In this section we show the following theorem, which can be considered as a counterpart of Theorem \ref{thm:CPfails}.

\begin{thm}\normalfont\label{thm:SCPfails}
Assume $\mathrm{MA}^+(\text{$\omega_1$-closed})$. Then for any $(\omega_1+1)$-operationally closed poset $\mathbb{P}$, it holds that
$$
\force_\mathbb{P}\neg\mathrm{SCP}^-.
$$
\end{thm}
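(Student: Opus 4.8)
The plan is to mimic, almost verbatim, the Chang-structure construction in the proof of Theorem \ref{thm:CCholds}, but to read off from the resulting structure a single ground-model ordinal $\beta\in S^2_1$ whose $\mathrm{SCP}^-$-threading the generic extension is forced to destroy. Fix a winning operation $\sigma$ for $G_{\omega_1+1}(\mathbb{P})$; as in the proof of Theorem \ref{thm:CCholds} I would first reduce to the case that $\mathbb{P}$ is non-atomic (the atomic case being the degenerate instance, where the claim reduces to $\mathrm{MA}^+(\text{$\omega_1$-closed})$ itself refuting $\mathrm{SCP}^-$). Next I would introduce the operational analogue $\mathbb{R}=\mathbb{R}(\mathbb{P},\sigma)$ of the poset of Definition \ref{dfn:posetr}: its conditions are the descending sequences of single $\mathbb{P}$-conditions forming initial segments of plays of $G_{\omega_1+1}(\mathbb{P})$ in which Player $\mathrm{II}$ follows $\sigma$, with the projection $\pi(P)$ given by the $\sigma$-value at the top. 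Exactly as in Lemma \ref{collapse}, non-atomicity makes each node of $\mathbb{R}$ have $\aleph_2$ pairwise incompatible one-step extensions, so $\mathbb{R}$ collapses $\omega_2$, while $\mathbb{R}$ remains $\omega_1$-closed and $\pi$ a projection.

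Assuming toward a contradiction that some $q_0\force_\mathbb{P}$ ``$\langle\dot z_\alpha\mid\alpha\in S^2_0\rangle$ witnesses $\mathrm{SCP}^-$'', I would then build by recursion on $\xi<\omega_1$ a $\prec^s_{\omega_1}$-increasing continuous chain of countable $N_\xi\prec\mathfrak{A}$ (with $\sigma,\mathbb{R},\dot z\in N_0$ and $N_\xi\cap\omega_1=\delta$ constant) together with master conditions $P_\xi\in\mathbb{R}$, following Cases 1--3 of the proof of Theorem \ref{thm:CCholds}: Lemma \ref{lma:FAstrong} at successors forces $N_\xi\prec^s_{\omega_1}N_{\xi+1}$, and Lemma \ref{lma:onestep} handles limits. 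Writing $\beta_\xi:=\sup(N_\xi\cap\omega_2)$, each $N_\xi$ is countable so $\beta_\xi\in S^2_0$, while $\beta:=\sup_{\xi<\omega_1}\beta_\xi=\sup(N\cap\omega_2)\in S^2_1$ for $N=\bigcup_\xi N_\xi$, and $\{\beta_\xi\mid\xi<\omega_1\}$ is club in $\beta$; the common extension $q\leq_\mathbb{P}q_0$ of the play is $(N,\mathbb{P})$-generic. Since $\mathbb{P}$ is $\omega_1$-closed, each $z_{\beta_\xi}=(\dot z_{\beta_\xi})_G$ is a ground-model countable cofinal subset of $\beta_\xi$.

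The combinatorial hook is that any threading club $C\subseteq\beta\cap S^2_0$ of order type $\omega_1$ is itself club in $\beta$, hence meets the club $\{\beta_\xi\}$ in a club; writing $w=\bigcup_{\alpha\in C}z_\alpha$ and $E=\{\xi\mid\beta_\xi\in C\}$, the increasingness and continuity of $\langle z_\alpha\mid\alpha\in C\rangle$ force $z_{\beta_\xi}=w\cap\beta_\xi$ for every limit $\xi\in E$. Thus $\mathrm{SCP}^-$ at $\beta$ would produce a single set $w\subseteq\beta$ \emph{cohering} with the diagonal sequence $\langle z_{\beta_\xi}\mid\xi<\omega_1\rangle$ on a club, so it suffices to make $q$ force that this sequence admits no such thread. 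This is precisely where operational, rather than $*$-tactical, closedness must be exploited: because $\sigma$ reads only the current position and the turn number, at each limit stage the $\sigma$-determined move $\pi(P_\xi)$ does not commit $\dot z_{\beta_\xi}$ to any particular value, leaving slack below it to steer the generic so that $z_{\beta_\xi}$ disagrees with a prescribed candidate $w\cap\beta_\xi$. The intended mechanism is to interleave into the recursion a bookkeeping of the candidate threads visible through the chain and, at the corresponding stages, use Lemma \ref{lma:onestep} to extend the master condition inside $N_{\xi+1}$ forcing $z_{\beta_\xi}\neq w\cap\beta_\xi$. The sharp contrast with $\mathbb{P}_{\mathrm{SCP}^-}$, whose winning $*$-tactic explicitly records the set $\{\beta^p\mid p\in A\}$ and thereby \emph{maintains} exactly this coherence, pinpoints why the obstruction is available against operations but not against $*$-tactics.

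The hard part, and the technical heart of the argument, will be the fusion. The candidate threads $w$ live in $V[G]$ and are unavailable during the $V$-side recursion, so the diagonalization cannot target any specific $C$; instead one must argue, using only the infimum-dependence of $\sigma$, that the countably many constraints imposed at each limit stage can be realized simultaneously by the single $(N,\mathbb{P})$-generic master condition $q$, and that every $V[G]$-thread of $\langle z_{\beta_\xi}\rangle$ is already approximated---hence killed---inside some $N_{\xi+1}$. Verifying that the operation's slack really suffices to defeat all threads at once, rather than merely one at a time, is the delicate step, and it is exactly the step that has no analogue for $*$-tactically closed posets.
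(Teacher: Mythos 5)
Your outline defers everything to a final ``fusion'' step, and that step is not merely left unproven --- the mechanism you propose for it cannot work. The decisive test case is $\dot Z=\check Z$ a ground-model assignment (a core case here, not a degenerate one: since $\mathbb{P}$ is $(\omega_1+1)$-operationally closed it is $\omega_2$-Baire, so each $z_\alpha$ and each candidate club lies in $V$ anyway). For a check name there is literally no ``slack below $\pi(P_\xi)$ to steer the generic'': whether $Z\bigl(\sup(N_\xi\cap\omega_2)\bigr)\subseteq N_\xi$ is a property of the model $N_\xi$ alone, which no condition can alter; and at limit stages $N_\xi=\bigcup_{\zeta<\xi}N_\zeta$ is forced by continuity, so you have no freedom precisely at the stages where a failure of coherence must be arranged (stationarily many of them are limits). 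Worse, your CC-style construction forces the \emph{opposite} of what is needed: the final $q$ is strongly $(N_\xi,\mathbb{P})$-generic for every $\xi$ (as in the proof of Theorem \ref{thm:CCholds}), so whenever $\beta_\xi\in N_{\xi+1}$ it confines $\dot z_{\beta_\xi}$ to a decided value inside $N_{\xi+1}$, and when $\beta_\xi\notin N$ there is no name in the tower to act on at all. Thus every diagonal value is absorbed by the chain --- the maximally thread-friendly configuration --- and nothing you have forced precludes a thread; no contradiction can be extracted. The counting problem you acknowledge is also real (there are $2^{\aleph_1}$ clubs $E\subseteq\omega_1$, and a thread's initial segments need not be elements of any $N_{\xi+1}$, so ``every thread is approximated inside some $N_{\xi+1}$'' is false), and your ``hook'' is misstated: increasingness plus continuity along $C$ give $z_{\beta_\xi}=\bigcup\{z_\alpha\mid\alpha\in C\cap\beta_\xi\}$ at limit points, not $z_{\beta_\xi}=w\cap\beta_\xi$, since later members of the sequence may acquire new elements below $\beta_\xi$.

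What the argument actually requires is a chain along which \emph{stationarily many} stages are bad, i.e.\ $z_{\beta_\xi}\nsubseteq N_\xi$; at such a stage coherence fails, because all earlier values are trapped in $N_\xi$. Producing even one bad stage is where operational closedness is genuinely used, and the mechanism is not steering but a two-tower covering argument: two simultaneous plays against the \emph{same} operation (Lemma \ref{lma:I_wins}, and Plays $A$ and $B$ in the proof of Lemma \ref{lma:stationary}) yield models $N^A, N^B$ with the same supremum $\eta$ of their $\omega_2$-parts but with $N^A\cap N^B\cap\omega_2$ bounded in $\eta$, together with a single condition generic for both; since $\dot Z(\eta)$ is unbounded in $\eta$ it must escape one of them --- but one never learns which, so this cannot be threaded into a single $V$-built recursion at prescribed stages. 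The paper therefore does not build the chain by hand: it forces with the auxiliary $\omega_1$-closed poset $\mathbb{S}(\mathbb{P},\tau)$ of Definition \ref{dfn:S} (this, not Definition \ref{dfn:posetr}, is the operational analogue), in whose extension the bad models form a stationary set (Lemma \ref{lma:stationary}, a ZFC fact), and then applies $\mathrm{MA}^+(\text{$\omega_1$-closed})$ \emph{to $\mathbb{S}$}, using its stationary-set-preservation clause to pull back into $V$, simultaneously, the chain $\seq{N_\zeta}$, the single condition $q$, and a stationary set $T$ of bad stages; finally $\omega_2$-Baireness pulls the forced coherence club back to a club $C_0\in V$, and any $\zeta_0\in T\cap\mathrm{l.p.}(C_0)$ gives the contradiction. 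Your proposal invokes $\mathrm{MA}^+$ only through Lemma \ref{lma:FAstrong} and never through the ``$+$''-clause; that clause, together with the two-play trick behind Lemma \ref{lma:stationary}, is exactly the missing content of your ``delicate step,'' and without them the approach does not close.
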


Since $\mathrm{SCP}^-$ can be forced by a $*$-tacitically closed poset, Theorem \ref{thm:SCPfails} shows that $*$-tactical closedness does not imply $(\omega_1+1)$-operational closedness. Since $\mathrm{CP}$ can be forced by an $(\omega_1+1)$-operationally closed poset, it also shows that $\mathrm{CP}$ does not imply $\mathrm{SCP}^-$.

Note that, since a poset is $(\omega_1+1)$-operationally closed if and only if its Boolean completion is (see \cite[Lemma 8]{yoshinobu13:_oper}), to prove Theorem \ref{thm:SCPfails} we may assume that $\mathbb{P}=\mathcal{B}\setminus\{0_\mathcal{B}\}$ for a complete Boolean algebra $\mathcal{B}$. So in the rest of this section we fix such $\mathcal{B}$ and $\mathbb{P}$, and we also fix a winning operation $\tau:(\omega_1+1)\times\mathbb{P}\to\mathbb{P}$.

\smallskip
We define an auxiliary poset which plays a similar role as $\mathbb{R}$ in \S 5.

\begin{dfn}\normalfont\label{dfn:S}
We define a poset $\mathbb{S}=\mathbb{S}(\mathbb{P}, \tau)$ as follows: A condition $s$ in $\mathbb{S}$ is either the empty sequence $1_{\mathbb{S}}=\seq{}$, or of the form $\seq{a^s_\gamma\mid\gamma\leq\alpha^s}$ ($\zeta^s<\omega_1$) satisfying
\begin{enumerate}[(a)]
  \item $a^s_{\gamma+1}\leq_\mathbb{P}\tau(\gamma, a^s_\gamma)$ for each $\gamma<\alpha^s$, and
  \item $a^s_\gamma=\bigwedge\{a^s_\xi\mid\xi<\gamma\}$ for each nonzero limit $\gamma\leq\alpha^s$.
\end{enumerate}
$\mathbb{S}$ is ordered by initial segment. Note that $s=\seq{a^s_\gamma\mid\gamma\leq\alpha^s}$ is an $\mathbb{S}$-condition if and only if
$$
\begin{matrix}
\mathrm{I}: & a^s_0 & a^s_1 & \cdots &&a^s_{\omega+1}& \cdots &\\
\mathrm{II}: & \hspace{36pt} b^s_0 & \hspace{36pt} b^s_1 & \cdots & b^s_\omega &\hspace{36pt} b^s_{\omega+1} & \cdots & b^s_{\alpha^s}
\end{matrix}
$$
(where $b^s_\gamma:=\tau(\gamma, a^s_\gamma)$) forms a part of a play of $G_{\omega_1+1}(\mathbb{P})$ where Player $\mathrm{II}$ plays according to $\tau$. Therefore it is clear that $\mathbb{S}$ is $\omega_1$-closed.
For $s=\seq{a^s_\gamma\mid\gamma\leq\alpha^s}\in\mathbb{S}$ we denote $\alpha^s$ and $\tau(\alpha^s, a^s_{\alpha^s})$ as $\mathrm{lh}(s)$ and $\mathrm{lm}(s)$ respectively. We also set $\mathrm{lm}(1_{\mathbb{S}})=1_{\mathbb{P}}$. Then it is easy to see that $\mathrm{lm}:\mathbb{S}\to\mathbb{P}$ is a projection.
\end{dfn}
\begin{lma}\normalfont\label{lma:lengthdensity}
For each $\zeta<\omega_1$, $D^{\mathrm{lh}}_\zeta:=\{s\in\mathbb{S}\mid\mathrm{lh}(s)\geq \zeta\}$ is dense in $\mathbb{S}$.
\end{lma}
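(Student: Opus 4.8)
The plan is to read density straight off the game-theoretic description of $\mathbb{S}$ recorded in Definition~\ref{dfn:S}: a condition of $\mathbb{S}$ is nothing but an initial segment of a play of $G_{\omega_1+1}(\mathbb{P})$ in which Player~$\mathrm{II}$ obeys the winning operation $\tau$, and $\mathrm{lh}(s)$ is the ordinal length of that partial play. To extend a given $s=\seq{a^s_\gamma\mid\gamma\leq\alpha^s}$ into $D^{\mathrm{lh}}_\zeta$, I would simply keep the play running past $\alpha^s$ until its length reaches $\zeta$. If $\alpha^s\geq\zeta$ there is nothing to prove, so assume $\alpha^s<\zeta$.

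Concretely, I would define $a_\gamma$ for $\alpha^s\leq\gamma\leq\zeta$ by transfinite recursion with $a_{\alpha^s}:=a^s_{\alpha^s}$: at a successor $\gamma=\delta+1$ set $a_\gamma:=\tau(\delta,a_\delta)$, and at a nonzero limit $\gamma$ set $a_\gamma:=\bigwedge\{a_\xi\mid\xi<\gamma\}$. The successor clause always produces an admissible move for Player~$\mathrm{I}$: because $\tau$ takes values in $\mathbb{P}=\mathcal{B}\setminus\{0_{\mathcal{B}}\}$, the value $\tau(\delta,a_\delta)$ is nonzero and satisfies $a_{\delta+1}\leq_{\mathbb{P}}\tau(\delta,a_\delta)$, which is precisely clause~(a) of Definition~\ref{dfn:S}; clause~(b) holds by construction at limits. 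Thus the only thing that can go wrong is that some limit infimum collapses to $0_{\mathcal{B}}$; if none does, then $\seq{a_\gamma\mid\gamma\leq\zeta}$ is an $\mathbb{S}$-condition extending $s$ of length $\zeta$, and density follows.

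The heart of the matter, and the only place the hypothesis that $\tau$ is \emph{winning} (rather than merely a strategy) is used, is ruling out such a collapse. I would argue by contradiction: suppose $\gamma^*\leq\zeta$ is the least limit ordinal with $a_{\gamma^*}=0_{\mathcal{B}}$. Then $\seq{a_\xi\mid\xi<\gamma^*}$ together with Player~$\mathrm{II}$'s responses $\tau(\xi,a_\xi)$ is a legitimate run of $G_{\omega_1+1}(\mathbb{P})$ in which Player~$\mathrm{II}$ plays according to $\tau$; but at her $\gamma^*$-th turn the current position is $\bigwedge\{a_\xi\mid\xi<\gamma^*\}=0_{\mathcal{B}}$, so she cannot make a legitimate move there and fails to complete her $\omega_1$ moves. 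This contradicts the assumption that $\tau$ is a winning operation for $G_{\omega_1+1}(\mathbb{P})$. Hence no such $\gamma^*$ exists, every $a_\gamma$ lies in $\mathbb{P}$, and the recursion reaches $\zeta$.

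I would stress that Player~$\mathrm{I}$ never gets stuck on his own turns, since $\tau(\delta,a_\delta)\in\mathbb{P}$ always furnishes a legal continuation; the sole possible breakdown of the construction is the vanishing of a limit infimum, which winningness forbids. This is the direct analogue for $\mathbb{S}$ of the density statement in Lemma~\ref{density}(\ref{density:length}) for $\mathbb{R}$, and I expect the verification to be routine once the limit step is handled by the contradiction above.
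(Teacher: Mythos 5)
Your proof is correct and is exactly the routine argument the paper has in mind: the paper's own proof is just the word ``Straightforward,'' and your continuation-of-the-play argument (Player~$\mathrm{I}$ echoing $\tau$'s suggestions at successors, taking infima at limits, with the winningness of $\tau$ ruling out a collapse to $0_{\mathcal{B}}$ at limit stages via the game characterization of $\mathbb{S}$-conditions in Definition~\ref{dfn:S}) is clearly the intended one. The only cosmetic omission is the trivial case $s=1_{\mathbb{S}}$, where one first lets Player~$\mathrm{I}$ open with any condition, e.g.\ $1_{\mathbb{P}}$, and then runs your recursion.
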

\proof Straightforward.\qed

\begin{lma}\normalfont\label{lma:propofS}
$\mathbb{S}$ collapses $\omega_2$.
\end{lma}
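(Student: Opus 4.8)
The plan is to reuse the strategy behind Lemma~\ref{collapse}, since $\mathbb{S}$, exactly like $\mathbb{R}$ in \S\ref{sec:opstar1}, is a tree of height $\omega_1$ ordered by end-extension whose generic branch has length $\omega_1$ by Lemma~\ref{lma:lengthdensity}. First I would reduce to the case that $\mathbb{P}$ is non-atomic, which (as in the proof of Theorem~\ref{thm:CCholds}) is harmless. The crucial structural observation is that distinct immediate successors of a node are automatically incompatible in $\mathbb{S}$: if $\mathrm{lh}(s)=\gamma$, then an immediate successor is precisely $\concat{s}{\seq{a}}$ for some $a\le_{\mathbb{P}}\tau(\gamma,a^s_\gamma)=\mathrm{lm}(s)$, and since the $(\gamma+1)$-st coordinate is frozen along every further extension, $\concat{s}{\seq{a}}$ and $\concat{s}{\seq{a'}}$ have a common extension only when $a=a'$.

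Next I would count successors. Because $\mathbb{P}$ is non-atomic and $(\omega_1+1)$-strategically closed, below the nonzero condition $\mathrm{lm}(s)$ there are at least $2^{\aleph_1}\geq\aleph_2$ pairwise incompatible, hence distinct, conditions---this is exactly the fact already invoked in the proof of Lemma~\ref{collapse} (build a binary tree of height $\omega_1$ below $\mathrm{lm}(s)$, using $\omega_1$-closedness at the limit levels $<\omega_1$ and a winning strategy to bound the $\omega_1$-branches). Hence every node $s$ has at least $\aleph_2$ immediate successors, and in $V$ I can fix, for each such $s$, an injective sequence $\seq{e^s_\xi\mid\xi<\omega_2}$ of distinct elements of $\mathbb{P}$ below $\mathrm{lm}(s)$, so that each $\concat{s}{\seq{e^s_\xi}}$ is an $\mathbb{S}$-condition. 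I then define an $\mathbb{S}$-name $\dot{g}$ for a function from $\omega_1$ to $\omega^V_2$ by reading off the generic branch: $\dot{g}(\gamma)$ is the unique $\xi$ such that $\concat{s}{\seq{e^s_\xi}}$ lies on the branch, where $s$ is the length-$\gamma$ node (and $\dot{g}(\gamma)=0$ if no such $\xi$ exists). For each fixed $\xi<\omega^V_2$ the set of conditions $t$ having $\concat{s}{\seq{e^s_\xi}}$ as an initial segment for some proper initial segment $s$ of $t$ is dense, since any $s$ extends to $\concat{s}{\seq{e^s_\xi}}$; so by genericity every $\xi<\omega^V_2$ lies in the range of $\dot{g}$.

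Consequently $\dot{g}$ is forced to be a surjection of $\omega_1$ onto $\omega^V_2$. As $\mathbb{S}$ is $\omega_1$-closed it adds no new countable sequences and therefore preserves $\omega_1$; hence $\omega^V_2$ has cardinality $\aleph_1$ in $V^{\mathbb{S}}$, i.e.\ $\mathbb{S}$ collapses $\omega_2$. The only genuinely non-routine step is the branching count---securing $\aleph_2$-many distinct immediate successors rather than merely $2^{\aleph_0}$-many---and this is precisely where non-atomicity together with $(\omega_1+1)$-strategic closedness is indispensable; the remaining density bookkeeping is the same as that already carried out for $\mathbb{R}$.
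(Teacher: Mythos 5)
Your proof is correct and is essentially the paper's own argument: the paper proves Lemma \ref{lma:propofS} simply by citing the proof of Lemma \ref{collapse}, and what you spell out (the tree structure of height $\omega_1$, incompatibility of distinct immediate successors, $\aleph_2$-many immediate successors obtained from non-atomicity together with $(\omega_1+1)$-strategic closedness, and coding a surjection $\omega_1\to\omega_2^V$ along the generic branch) is exactly that argument transposed to $\mathbb{S}$. Your explicit reduction to non-atomic $\mathbb{P}$ also correctly surfaces a hypothesis the paper leaves implicit here (Lemma \ref{collapse} states it, Lemma \ref{lma:propofS} does not, and indeed the lemma fails for atomic $\mathbb{P}$), which is harmless in the intended application.
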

\proof Similar to the proof of Lemma \ref{collapse}.\qed

\smallskip
The following is the key lemma to prove Theorem \ref{thm:SCPfails}, though it is provable in $\mathrm{ZFC}$ and does not require $\mathrm{MA}^+(\text{$\omega_1$-closed})$.

\begin{lma}\normalfont\label{lma:stationary}
Suppose $\dot{Z}$ is a $\mathbb{P}$-name such that
\begin{eqnarray}\label{eq:cond}
&\force_\mathbb{P}&\text{\lq\lq $\dot{Z}$ is a function on $S^2_0$ such that for every $\alpha\in S^2_0$}\nonumber\\
&&\text{$\dot{Z}(\alpha)$ is a countable unbounded subset of $\alpha$.\rq\rq}
\end{eqnarray}
Then it holds that
\begin{equation}\label{eq:stat}
\force_\mathbb{S}\text{\lq\lq$\{x\in[\check{({\omega_2}^V)}]^\omega\mid\dot{Z}(\sup x)\nsubseteq x\}$ is stationary in $[\check{({\omega_2}^V)}]^\omega$.\rq\rq}
\end{equation}
\end{lma}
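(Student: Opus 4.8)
The plan is to prove (\ref{eq:stat}) by the standard master-condition characterization of stationarity: it suffices to show that for every $\mathbb{S}$-name $\dot F$ for a function $[\omega_2^V]^{<\omega}\to\omega_2^V$ and every $s^*\in\mathbb{S}$ there are $s\leq_\mathbb{S}s^*$ and a countable $x\subseteq\omega_2^V$ with $s\force_\mathbb{S}$``$x$ is closed under $\dot F$ and $\dot Z(\sup x)\nsubseteq x$''. Any club of $[\omega_2^V]^\omega$ in the extension is coded by such an $\dot F$, so this yields $\force_\mathbb{S}$``$T$ is stationary'' for the set $T$ in (\ref{eq:stat}). I will take $x=M\cap\omega_2^V$ for a countable $M\prec\langle H_\theta,\in,\mathbb{P},\mathbb{S},\tau,\mathrm{lm},\dot Z,\dot F,s^*\rangle$, so that $\delta:=\sup x\in S^2_0$ and $\delta\notin M$. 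Note that $\mathbb{S}$ is $\omega_1$-closed, so $[\omega_2^V]^\omega$ is computed the same way in $V$ and in the extension, and $\delta$ retains $V$-cofinality $\omega$.

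First I would produce the master condition. Since $\mathbb{S}$ is $\omega_1$-closed, I can build an $(M,\mathbb{S})$-generic descending $\omega$-sequence below $s^*$ and let $s_0$ be its greatest common extension; then $s_0$ is $(M,\mathbb{S})$-strongly generic, and, as $\dot F\in M$, $s_0\force$``$M\cap\omega_2^V$ is $\dot F$-closed''. Because $\mathrm{lm}\colon\mathbb{S}\to\mathbb{P}$ is a projection, $a^*:=\mathrm{lm}(s_0)$ is $(M,\mathbb{P})$-strongly generic, exactly as in the corresponding step of the proof of Theorem \ref{thm:CCholds} (Lemma \ref{lma:strgen} together with Lemma \ref{lma:onestep}).

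The heart of the matter — and the step I expect to be the main obstacle — is to arrange in addition that $s_0$ (or an extension of it) forces $\dot Z(\delta)\nsubseteq M\cap\omega_2^V$. I would reduce this to a statement about $\mathbb{P}$: it is enough to find $b\leq_\mathbb{P}a^*$ and $\xi\in\delta\setminus M$ with $b\force_\mathbb{P}\xi\in\dot Z(\delta)$. Granting such $b$ and $\xi$, the projection property gives $s\leq_\mathbb{S}s_0$ with $\mathrm{lm}(s)\leq_\mathbb{P}b$; since any extension of a strongly generic condition is again strongly generic, $s$ still forces ``$M\cap\omega_2^V$ is $\dot F$-closed'', while $s\force_\mathbb{S}$``$b\in G_\mathbb{P}$'', whence $s\force\xi\in\dot Z(\delta)$ and $\xi\notin x$. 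Thus $s$ and $x=M\cap\omega_2^V$ witness the required instance.

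Everything therefore comes down to finding a member of $\dot Z(\delta)$ below $a^*$ that escapes $M$. Since $(\omega_1+1)$-strategic closedness makes $\mathbb{P}$ $\omega_1$-distributive, $\dot Z(\delta)$ is forced to be a ground-model countable cofinal subset of $\delta$, and its possible members below $a^*$ are cofinal in $\delta=\sup(M\cap\omega_2^V)$; the obstruction is that \emph{a priori} all of them could lie in $M$. This is precisely the point at which $(\omega_1+1)$-operational closedness, rather than mere strategic closedness, must be used: I plan to exploit that the winning operation $\tau$ computes Player $\mathrm{II}$'s reply from the current position and the ordinal of the turn alone. Writing $\delta_1=M\cap\omega_1$ and $p^*=a^{s_0}_{\delta_1}$ for the top position of the play coded by $s_0$, I would reroute the play strictly below $p^*$ along moves not belonging to $M$, steering $G_\mathbb{P}$ toward a value of $\dot Z(\delta)$ transcending $M\cap\delta$; operationality is what guarantees that the rerouted sequence is still a legal play, hence an $\mathbb{S}$-condition, because the legitimacy of Player $\mathrm{II}$'s moves depends only on the positions and not on the history recorded inside $M$. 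Showing that such a reroute genuinely forces a fresh element into $\dot Z(\delta)$ — equivalently, that an operationally closed $\mathbb{P}$ cannot confine $\dot Z(\delta)$ to $M$ when $\delta=\sup(M\cap\omega_2^V)$ — is the crux, and is exactly where the argument must outrun what strategic closedness alone provides (for a $\square_{\omega_1}$-style name the confinement would hold on a club, a situation consistent with strategic but not with operational closedness).
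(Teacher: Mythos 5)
Your reduction to the master-condition form (find a countable $M$ and an $(M,\mathbb{S})$-generic $s\leq_{\mathbb{S}}s^*$ forcing $\dot Z(\sup(M\cap\omega_2^V))\nsubseteq M$) is exactly the paper's own first step, and your bookkeeping on the $\mathbb{S}$-side (strong genericity, the projection $\mathrm{lm}$, extending $s_0$ so that $\mathrm{lm}(s)$ gets below a prescribed $\mathbb{P}$-condition) is correct. But there is a genuine gap precisely at the point you yourself flag as the crux: you never prove that there exist $b\leq_\mathbb{P}a^*$ and $\xi\in\delta\setminus M$ with $b\force_\mathbb{P}\text{``}\check{\xi}\in\dot Z(\check{\delta})\text{''}$. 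The ``rerouting'' sketch is not an argument: steering the play through conditions outside $M$ gives no control whatsoever over the name $\dot Z(\delta)$, whose possible values below the rerouted conditions could still be confined to $M\cap\delta$ (which is itself cofinal in $\delta$, so unboundedness of $\dot Z(\delta)$ yields no contradiction). Worse, the order of quantifiers in your plan --- fix an arbitrary $M$ and an arbitrary $(M,\mathbb{S})$-generic $s_0$ first, then look for the escaping $b$ --- is likely unrepairable: nothing in that setup distinguishes it from the analogous statement for merely strategically closed posets, where it is false (your own $\square_{\omega_1}$ remark), and the paper's proof indicates that the model and the condition must be built together, very carefully.

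What the paper actually does to overcome this is a two-model pigeonhole, and this is the entire content of the section: it introduces the auxiliary game $\mathfrak{G}(\mathfrak{A},p)$, in which Player $\mathrm{I}$ must produce an increasing chain of countable models together with strongly generic conditions, proves that Player $\mathrm{I}$ has a winning strategy (Lemma \ref{lma:I_wins}, a substantial induction in its own right), and then runs two plays $A$ and $B$ against this winning strategy simultaneously, with Player $\mathrm{II}$ in each play feeding in data from the other play (in particular $q^A_n=\tau(n,p^B_n)$ and $q^B_n=p^A_{n+1}$). This produces two models $N^A,N^B$ with the same supremum $\eta$ of their $\omega_2$-parts but with $N^A\cap N^B\cap\omega_2$ bounded below $\eta$, together with a single condition $q$ that is strongly generic for both models and satisfies $q=q\restrict N^A=q\restrict N^B$; operationality is used essentially here, since Player $\mathrm{II}$'s replies $\tau(n,\cdot)$ depend only on the current position and the turn number, so the interleaved conditions of the two plays constitute one legal $\tau$-play of $G_{\omega_1+1}(\mathbb{P})$ with a nonzero limit. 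Since $\dot Z(\eta)$ is forced to be unbounded in $\eta$ while $N^A\cap N^B\cap\omega_2$ is bounded below $\eta$, some $q''\leq_\mathbb{P}\tau(\delta,q)$ forces $\dot Z(\eta)$ to escape $N^A$ (say), and one then threads an $(N^A,\mathbb{S})$-generic sequence above $q$ (via Lemma \ref{lma:onestep}) and caps it with $\seq{q,q''}$ to obtain the required $s$. None of this machinery --- the game, Player $\mathrm{I}$'s winning strategy, or the twin-play construction --- appears in your proposal, so the central step of the lemma remains unproved.
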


Let us assume Lemma \ref{lma:stationary} for a while and prove Theorem \ref{thm:SCPfails} first.

\proof[Proof of Theorem \ref{thm:SCPfails} {\rm (assuming Lemma \ref{lma:stationary})}]

Assume $\mathrm{MA}^+(\text{$\omega_1$-closed})$ holds in $V$. Suppose for a contradiction that there exists a $\mathbb{P}$-name $\dot{Z}$ satisfying (\ref{eq:cond}) of Lemma \ref{lma:stationary} and $p\in\mathbb{P}$ such that:
\begin{eqnarray}\label{eq:pforce}
p&\force_\mathbb{P}&\text{\lq\lq For every $\beta\in S^2_1$ there exists a club subset $C$ of $\beta\cap S^2_0$}\nonumber\\
&&\text{such that $\mathrm{o.t.} C=\omega_1$ and that $\seq{\dot{Z}(\alpha)\mid\alpha\in C}$ is}\nonumber\\
&&\text{$\subseteq$-continuous increasing.\rq\rq}
\end{eqnarray}
By Lemma \ref{lma:stationary} we have (\ref{eq:stat}). Since $\mathbb{S}$ is $\omega_1$-closed and collapses $\omega_2$, one can choose an $\mathbb{S}$-name $\dot{N}$ such that
\begin{eqnarray}\label{eq:forceS}
&\force_\mathbb{S}&\text{\lq\lq$\dot{N}$ is a function on $\omega_1$ such that $\seq{\dot{N}(\zeta)\mid \zeta<\omega_1}$ is a $\subseteq$-continuous}\nonumber\\
&&\text{increasing sequence of countable sets satisfying}\nonumber\\
&&\text{$\bigcup\{\dot{N}(\zeta)\mid\zeta<\omega_1\}=\check{({\omega_2}^V)}$ and that for each $\zeta<\omega_1$}\nonumber\\
&&\text{it holds that $\sup\dot{N}(\zeta)\notin\dot{N}(\zeta)$, $\sup\dot{N}(\zeta)<\sup\dot{N}(\zeta+1)$ and}\nonumber\\
&&\text{$\dot{Z}(\sup\dot{N}(\zeta))\subseteq \dot{N}(\zeta+1)$.\rq\rq}
\end{eqnarray}
Therefore,  in $V^\mathbb{S}$, $\{\dot{N}(\zeta)\mid \zeta<\omega_1\}$ forms a club subset of $[\check{({\omega_2}^V)}]^{\leq\omega}$ and  thus by (\ref{eq:stat}) we have
$$
\force_\mathbb{S}\text{\lq\lq$\dot{T}:=\{\zeta<\omega_1\mid\dot{Z}(\sup \dot{N}(\zeta))\nsubseteq\dot{N}(\zeta)\}$ is stationary.\rq\rq}
$$
Note that, for each $\zeta<\omega_1$,
$$
D^0_\zeta:=\{s\in\mathbb{S}\mid\text{$s$ decides the values of $\dot{N}(\check{\zeta})$ and $\dot{Z}(\sup\dot{N}(\check{\zeta}))$}\}
$$
is a dense subset of $\mathbb{S}$.

Now apply $\mathrm{MA}^+(\text{$\omega_1$-closed})$ to get a filter $F$ on $\mathbb{S}$ such that $\seq{p}\in F$, $F$ intersects $D^{\mathrm{lh}}_\zeta$ and $D^0_\zeta$ for all $\zeta<\omega_1$, and that
$$
T=\{\zeta<\omega_1\mid\exists s\in F[s\force_\mathbb{S}\text{\lq\lq $\check{\zeta}\in\dot{T}$\rq\rq}]\}
$$
is stationary in $\omega_1$. Note that, since $F$ intersects $D^{\mathrm{lh}}_\zeta$ for all $\zeta<\omega_1$, $\bigcup F$ is of the form $\langle a_\gamma\mid\gamma<\omega_1\rangle$ with $a_0=p$, and
$$
\begin{matrix}
\mathrm{I}: & a_0 & a_1 & \cdots &&a_{\omega+1}& \cdots\\
\mathrm{II}: & \hspace{36pt} b_0 & \hspace{36pt} b_1 & \cdots & b_\omega &\hspace{36pt} b_{\omega+1} & \cdots
\end{matrix}
$$
(where $b_\gamma:=\tau(\gamma, a_\gamma)$) forms a play of $G_{\omega+1}(\mathbb{P})$ where Player $\mathrm{II}$ plays according to $\tau$. Therefore this sequence has a common extension $q\in\mathbb{P}$. Note also that each element of $F$ is an initial segment of $\bigcup F$, and thus $F$ is $\omega_1$-directed. For each $\zeta<\omega_1$ let $N_\zeta$ and $z_\zeta$ be such that
\begin{equation}\label{eq:riforce}
\text{$r_\zeta\force_\mathbb{S}\text{\lq\lq$\dot{N}(\check{\zeta})=\check{N}_\zeta$ and $\dot{Z}(\sup\dot{N}(\check{\zeta}))=\check{z}_\zeta$\rq\rq}$}
\end{equation}
holds for some $r_\zeta\in F$. Such $N_\zeta$ and $z_\zeta$ uniquely exist by the directedness of $F$ and the fact that $F$ intersects $D^0_\zeta$. Moreover, by the definition of $T$, for each $\zeta\in T$ we may assume
\begin{equation}\label{eq:iint}
r_\zeta\force_\mathbb{S}\text{\lq\lq$\dot{Z}(\sup\dot{N}(\check{\zeta}))\nsubseteq\dot{N}(\check{\zeta})$.\rq\rq}
\end{equation}
Let $\alpha_\zeta=\sup N_\zeta$ for $\zeta<\omega_1$. By (\ref{eq:forceS}), (\ref{eq:riforce}), (\ref{eq:iint}) and the $\omega_1$-directedness of $F$ we have the following:
\begin{enumerate}[(a)]
\item $\seq{N_\zeta\mid \zeta<\omega_1}$ forms a $\subseteq$-increasing continuous sequence of countable subsets of $\omega_2$ and $\seq{\alpha_\zeta\mid \zeta<\omega_1}$ is a continuous strictly increasing sequence.
\item For each $\zeta<\omega_1$ it holds that $\sup z_\zeta=\alpha_\zeta$ and $z_\zeta\subseteq N_{\zeta+1}$, and therefore $\bigcup\{z_\xi\mid\xi<\zeta\}\subseteq N_\zeta$ if $\zeta$ is limit.\label{cond:b}
\item For each $\zeta\in T$, $z_\zeta\nsubseteq N_\zeta$ holds.\label{cond:c}
\end{enumerate}

Now for each $\zeta<\omega_1$, by (\ref{eq:riforce}) and the fact that $\sup z_\zeta=\alpha_\zeta$, it holds that
\begin{equation}
r_\zeta\force_\mathbb{S}\text{\lq\lq$\dot{Z}(\check{\alpha_\zeta})=\check{z}_\zeta$.\rq\rq}
\end{equation}
Since $\dot{Z}$ is a $\mathbb{P}$-name and $\mathrm{lm}$ is a projection, by absoluteness we have
$$
\mathrm{lm}(r_\zeta)\force_\mathbb{P}\text{\lq\lq$\dot{Z}(\check{\alpha_\zeta})=\check{z}_\zeta$.\rq\rq}
$$
Since $q$ extends $\mathrm{lm}(r_\zeta)$ for all $\zeta<\omega_1$ we have
\begin{equation}\label{eq:qz}
q\force_\mathbb{P}\text{\lq\lq$\forall \zeta<\omega_1[\dot{Z}(\check{\alpha_\zeta})=\check{z}_\zeta]$.\rq\rq}
\end{equation}
Since $q$ also extends $p$, by (\ref{eq:pforce}) we have
\begin{eqnarray}\label{eq:qclub}
q&\force_\mathbb{P}&\text{\lq\lq There exists a club subset $C$ of $\omega_1$ such that $\seq{\dot{Z}({\check{\alpha}_\zeta})\mid \zeta\in C}$}\nonumber\\
&&\phantom{\lq\lq}\text{is a $\subseteq$-continuous increasing sequence.\rq\rq}
\end{eqnarray}
By (\ref{eq:qz}), (\ref{eq:qclub}) and the fact that $\mathbb{P}$ is $\omega_2$-Baire, there exist $q'\leq_\mathbb{P}q$ and a club subset $C_0$ (in $V$) of $\omega_1$ such that
$$
q'\force_\mathbb{P}\text{\lq\lq$\seq{z_\zeta\mid \zeta\in C_0}\check{}$ is a $\subseteq$-continuous increasing sequence.\rq\rq}
$$
By absoluteness, $\seq{z_\zeta\mid \zeta\in C_0}$ is a $\subseteq$-continuous increasing sequence in $V$.
Now since $T$ is stationary in $\omega_1$, there is an ordinal $\zeta_0\in T\cap\mathrm{l.p.}(C_0)$. Thus on the one hand, by (\ref{cond:c}) we have that $z_{\zeta_0}\nsubseteq N_{\zeta_0}$, and on the other hand, by (\ref{cond:b}) we have
$$
z_{\zeta_0}=\bigcup\{z_\xi\mid\xi\in \zeta_0\cap C_0\}\subseteq\bigcup\{z_\xi\mid\xi<\zeta_0\}\subseteq N_{\zeta_0}.
$$
This is a contradiction and finishes the proof of Theorem \ref{thm:SCPfails}.\qed

\smallskip
Now let us go back to the proof of Lemma \ref{lma:stationary}. To this end, we introduce another type of two player game, which is a variation of the one introduced by Velickovic \cite[p.272]{velickovic92:_forcin}.

\begin{dfn}\normalfont
Let $\theta$ be an uncountable regular cardinal satisfying $\{\mathbb{P}, \tau\}\in H_\theta$, and $\mathfrak{A}$ a model of the form $\seq{H_\theta, \in, \mathbb{P}, \tau, \ldots}$. Let $\mathcal{C}$ denote the set of countable elementary submodels of $\mathfrak{A}$. For $p\in \mathbb{P}$, $\mathfrak{G}(\mathfrak{A}, p)$ denotes the following two-player game. Players choose their moves as follows:

\begin{table}[h]
\hspace*{2cm}
\begin{tabular}{ccccccl}
$\mathrm{I}$ & : & $\seq{N_0, p_0}$ & & $\seq{N_1, p_1}$ & & $\cdots$\\
$\mathrm{II}$ & : & & $\seq{\eta_0, q_0}$ & & $\seq{\eta_1, q_1}$ & $\cdots$,
\end{tabular}
\end{table}

\noindent
where $N_i\in\mathcal{C}$, $\eta_i\in[\omega_1, \omega_2)$ and $p_i$, $q_i\in\mathbb{P}$ for each $i<\omega$. Players must follow the following rules:
\begin{enumerate}[(a)]
  \item $p_i$ is $(N_i, \mathbb{P})$-strongly generic for each $i<\omega$.\label{rulea}
  \item $p_0\restrict N_0\leq_{\mathbb{P}}p$, and $p_{i+1}\restrict N_{i+1}\leq_{\mathbb{P}}q_i$ for each $i<\omega$.\label{ruleb}
  \item $N_{i+1}\succ^s_{\eta_i}N_i$ for each $i<\omega$.\label{rulec}
  \item $q_i\leq_\mathbb{P}p_i$ for each $i<\omega$.\label{ruled}
\end{enumerate}
Note that (\ref{rulea})--(\ref{rulec}) are required to Player $\mathrm{I}$, whereas (\ref{ruled}) is to Player $\mathrm{II}$. Player $\mathrm{I}$ wins if and only if he successfully finished his $\omega$ turns without becoming unable to make a legal move on the way.
\end{dfn}

The following lemma is an analogue of Velickovic \cite[Lemma 3.7]{velickovic92:_forcin}\footnote{The referee pointed out that an argument to some extent similar can also be found in Gitik \cite[Proof of Theorem 1.1]{Gitik1985:_GITNSO}.}.

\begin{lma}\normalfont\label{lma:I_wins}
Player $\mathrm{I}$ has a winning strategy for $\mathfrak{G}(\mathfrak{A}, p)$.
\end{lma}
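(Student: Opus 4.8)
The plan is to build Player $\mathrm{I}$'s winning strategy by having him construct, over the course of his $\omega$ turns, an increasing chain of countable elementary submodels whose associated conditions assemble into a single play of $G_{\omega+1}(\mathbb{P})$ governed by $\tau$, so that the $(\omega_1+1)$-operational closedness guarantees a legal move is always available at limit stage $\omega$. Concretely, at his $i$-th turn Player $\mathrm{I}$ must produce $\seq{N_i, p_i}$ with $p_i$ being $(N_i, \mathbb{P})$-strongly generic and $p_i\restrict N_i$ below the opponent's last condition $q_{i-1}$ (or below $p$ when $i=0$). The key idea is to let him carry along an auxiliary $\mathbb{S}$-condition: since $\mathbb{S}=\mathbb{S}(\mathbb{P},\tau)$ from Definition \ref{dfn:S} is $\omega_1$-closed, collapses $\omega_2$ (Lemma \ref{lma:propofS}), and comes with the projection $\mathrm{lm}:\mathbb{S}\to\mathbb{P}$, he can use $\mathbb{S}$-generic conditions over the models $N_i$ to both manufacture strongly generic conditions downstairs and force the strict growth $N_{i+1}\succ^s_{\eta_i}N_i$ required by rule (\ref{rulec}).

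First I would have Player $\mathrm{I}$ begin by choosing $N_0\prec\mathfrak{A}$ countable with $p,\mathbb{P},\tau,\mathbb{S}\in N_0$, and picking an $(N_0,\mathbb{S})$-generic sequence inside $N_0$ whose greatest common extension $t_0$ lies below some $\mathbb{S}$-condition pointing at $p$; then Lemma \ref{lma:strgen} (applied to $\mathbb{S}$) makes $t_0$ strongly $(N_0,\mathbb{S})$-generic, and because $\mathrm{lm}$ is a projection, $p_0:=\mathrm{lm}(t_0)$ is $(N_0,\mathbb{P})$-strongly generic with $p_0\restrict N_0\leq_\mathbb{P}p$. Player $\mathrm{II}$ responds with $\seq{\eta_0,q_0}$, $q_0\leq_\mathbb{P}p_0$. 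At the inductive step, given $q_i$, Player $\mathrm{I}$ needs a strictly larger model: here I would invoke the fact that $\mathbb{S}$ collapses $\omega_2$ exactly as in Lemma \ref{lma:FAstrong}/Lemma \ref{collapse}, so there is an $\mathbb{S}$-name for a surjection $\omega_1\to\omega_2^V$ in $N_i$; using a condition deciding a value of this name above $\sup(N_i\cap\omega_2)$ and above $\eta_i$, he selects an $(N_{i+1},\mathbb{S})$-generic condition $t_{i+1}$ in a new model $N_{i+1}\ni N_i,\eta_i,q_i$ with $\mathrm{lm}(t_{i+1})\restrict N_{i+1}\leq_\mathbb{P}q_i$, and this forces $N_{i+1}\cap\omega_2\supsetneq N_i\cap\omega_2$ while keeping $N_{i+1}\cap\eta_i=N_i\cap\eta_i$, i.e. rule (\ref{rulec}).

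Finally I would verify that Player $\mathrm{I}$ never gets stuck at the limit of his $\omega$ turns — but actually the winning condition only asks that he complete his $\omega$ turns, so the real content is that each individual move can be made, which the above construction supplies provided the chain of auxiliary $\mathbb{S}$-conditions stays coherent. The cleanest way to guarantee coherence is to have the $t_i$ (equivalently the plays of $G_{\omega+1}(\mathbb{P})$ coded by them, via the characterization in Definition \ref{dfn:S}) form a descending sequence in $\mathbb{S}$, so that $\bigcup_i t_i$ extends to an $\mathbb{S}$-condition by $\omega_1$-closedness and the operational closedness of $\mathbb{P}$ continues each partial play through its limit; Lemma \ref{lma:onestep} is the tool that lets him realize a dense requirement in $\mathbb{S}$ below a given strongly generic condition while preserving the projection below $q_i$.

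The main obstacle I anticipate is arranging the strict passage $N_{i+1}\succ^s_{\eta_i}N_i$ simultaneously with the downward condition $p_{i+1}\restrict N_{i+1}\leq_\mathbb{P}q_i$ and with $N_i\cap\eta_i=N_{i+1}\cap\eta_i$: one must choose $N_{i+1}$ to contain $N_i$ as an element (so that $\eta_i$ and the relevant $\mathbb{S}$-generic data are available) yet keep its intersection with $\eta_i$ fixed, which forces a careful use of the collapsing name to push new ordinals into $N_{i+1}\cap\omega_2$ strictly above $\eta_i$ only. This is precisely where the fact that $\mathbb{S}$ collapses $\omega_2$ (Lemma \ref{lma:propofS}) does the essential work, mirroring the role of Lemma \ref{lma:FAstrong} in the proof of Theorem \ref{thm:CCholds}; the rest is a bookkeeping exercise combining Lemmata \ref{lma:strgen}, \ref{lma:onestep} and the projection property of $\mathrm{lm}$.
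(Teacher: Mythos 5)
Your proposal has a genuine gap, and it sits exactly at the point you yourself flag as ``the main obstacle'': rule~(\ref{rulec}). Your inductive step asks for a countable $N_{i+1}\prec\mathfrak{A}$ with $N_i,\eta_i,q_i\in N_{i+1}$ and $N_{i+1}\cap\eta_i=N_i\cap\eta_i$. No such model exists once Player $\mathrm{II}$ plays $\eta_i>\sup(N_i\cap\omega_2)$, which is a legal move she will certainly make: since $N_i\in N_{i+1}$ and $\omega_2\in N_{i+1}$, the ordinal $\sup(N_i\cap\omega_2)$ is definable from parameters in $N_{i+1}$ and hence belongs to $N_{i+1}$; it does not belong to $N_i$ (the set $N_i\cap\omega_2$ is closed under successor, so it has no largest element, and its supremum has cofinality $\omega<\mathrm{cf}(\omega_2)$); and it lies below $\eta_i$. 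Hence $N_{i+1}\cap\eta_i\supsetneq N_i\cap\eta_i$ for \emph{every} elementary $N_{i+1}$ containing $N_i$ as an element, and rule~(\ref{rulec}) fails. The collapsing trick borrowed from Lemmata \ref{lma:FAstrong} and \ref{lma:propofS} cannot repair this: deciding a value of the collapse name above $\eta_i$ only guarantees that some \emph{new} ordinal appears above $\eta_i$; it does nothing to prevent new ordinals from appearing \emph{below} $\eta_i$. Note that in Lemma \ref{lma:FAstrong} the preservation $N\prec_{\omega_1}N(F)$ is handed to you by $\mathrm{FA}^*(\text{$\omega_1$-closed})$ itself, and only below $\omega_1$ --- not below an adversarially chosen $\eta<\omega_2$, which is the whole difficulty of the present lemma.

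This obstruction is why the paper does not construct Player $\mathrm{I}$'s strategy directly at all. It first observes that $\mathfrak{G}(\mathfrak{A},p)$ is open for Player $\mathrm{II}$, hence determined, so it suffices to defeat an arbitrary strategy $\upsilon$ of Player $\mathrm{II}$. Given $\upsilon$, one builds by induction on $\omega_1\cdot n+\gamma$ an $\omega\times\omega_1$ matrix of models $N^n_\gamma$, conditions, ordinals and partial plays $R^n_\gamma$ against $\upsilon$: each row is a continuous increasing $\omega_1$-tower of models which absorbs Player $\mathrm{II}$'s responses (the data produced at stage $(n,\zeta)$ is placed inside $N^n_{\zeta+1}$), and each model of the next row contains the \emph{entire previous row} $\seq{N^l_\zeta\mid\zeta<\omega_1}$ as an element. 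The model serving as Player $\mathrm{I}$'s move in the assembled play is then the union $N^l_{\delta}=\bigcup\{N^l_\zeta\mid\zeta<\delta\}$ at the critical point $\delta=N^{l+1}_\gamma\cap\omega_1$ --- a subset of, but \emph{not} an element of, the next model. This is precisely what evades the obstruction above: every ordinal of $N^{l+1}_\gamma$ below $\eta^l=\omega_2\cap\bigcup\{N^l_\zeta\mid\zeta<\omega_1\}$ already lies in some $N^l_\zeta$ with $\zeta<\delta$, while Player $\mathrm{II}$'s ordinal $\eta^l_\delta$ is trapped below $\eta^l$ because it was absorbed into $N^l_{\delta+1}$. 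A final club-intersection argument (choosing $\gamma$ with $\delta^n_\gamma=\gamma$ for all $n$) glues the plays $R^n_\gamma$ into one full play against $\upsilon$ which Player $\mathrm{I}$ survives. Your auxiliary-$\mathbb{S}$ bookkeeping is harmless, but without the determinacy reduction and the tower/diagonal device your strategy gets stuck at its very first successor step.
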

\proof
Note that $\mathfrak{G}(\mathfrak{A}, p)$ is open to Player $\mathrm{II}$, and thus is determined. So it is enough to show that Player $\mathrm{II}$ does not have a winning strategy. Let $\upsilon$ be any strategy for Player $\mathrm{II}$.

By a simultaneous induction on $\omega_1\cdot n+\gamma$ (for $n<\omega$ and $\gamma<\omega_1$), we will choose $N^n_\gamma\in\mathcal{C}$, $r^n_\gamma$, $p^n_\gamma$, $q^n_\gamma\in\mathbb{P}$, $\eta^n_\gamma\in\omega_2$ and a partial play $R^n_\gamma$ of $\mathfrak{G}(\mathfrak{A}, p)$ so that the following requirements are satisfied: For each $m<\omega$ and $\xi<\omega_1$,
\begin{enumerate}[(i)]
\item $N^m_\xi \ni\seq{N^m_\zeta, r^m_\zeta, p^m_\zeta, q^m_\zeta, \eta^m_\zeta, R^m_\zeta}$ if $\xi=\zeta+1$.\label{sec6:Ninc}
\item $N^m_\xi \ni\seq{N^l_\zeta, r^l_\zeta, p^l_\zeta, q^l_\zeta, \eta^l_\zeta, R^l_\zeta\mid\zeta<\omega_1}$ if $m=l+1$.\label{sec6:Nexpand}
\item $N^m_\xi=\bigcup\{N^m_\zeta\mid\zeta<\xi\}$ if $\xi$ is limit.\label{sec6:Nconti}
\item $r^m_\xi\leq_\mathbb{P}q^m_\zeta$ if $\xi=\zeta+1$.\label{sec6:rleqq}
\item $r^m_\xi=\bigwedge\{r^m_\zeta\mid\zeta<\xi\}$ if $\xi$ is limit.\label{sec6:rconti}
\item $p^m_\xi=\tau(\xi, r^m_\xi)$.\label{sec6:ptaur}
\item $p^m_\xi$ is $(N^m_\xi, \mathbb{P})$-strongly generic and $p^m_\xi\restrict N^m_\xi=r^m_\xi$.\label{sec6:pgen}
\item $q^m_\xi\leq_\mathbb{P}p^m_\xi$.\label{sec6:qleqp}
\item $R^m_\xi$ is a part of a play of $\mathfrak{G}(\mathfrak{A}, p)$ where Player $\mathrm{II}$ plays according to $\upsilon$, ending with Player $\mathrm{I}$'s move $\seq{N^m_\xi, p^m_\xi}$ followed by Player $\mathrm{II}$'s move $\seq{\eta^m_\xi, q^m_\xi}$.\label{sec6:Rplay}
\item $R^m_\xi$ extends $R^l_{\delta^m_\xi}$ if $m=l+1$, where $\delta^m_\xi$ denotes $N^m_\xi\cap\omega_1$.\label{sec6:Rext}
\end{enumerate}

Let $n<\omega$ and $\gamma<\omega_1$, and suppose that $$\seq{N^m_\xi, r^m_\xi, p^m_\xi, q^m_\xi, \eta^m_\xi, R^m_\xi\mid\omega_1\cdot m+\xi<\omega_1\cdot n+\gamma}$$ is already defined and satisfy (i)--(ix) for every pair of $m$ and $\xi$ such that $\omega_1\cdot m+\xi<\omega_1\cdot n+\gamma$. We will define $N^n_\gamma$, $r^n_\gamma$, $p^n_\gamma$, $q^n_\gamma$, $\eta^n_\gamma$ and $R^n_\gamma$ so that (i)--(ix) for $m=n$ and $\xi=\gamma$ hold. Throughout the cases below, for example, \lq\lq(i) for $m=n$ and $\xi=\gamma$\rq\rq\ is simply expressed as \lq\lq(i)\rq\rq. Other combinations of $m$ and $\xi$, which appear as induction hypotheses, are explicitly specified.

\smallskip\noindent
\underline{Case 1} $n=\gamma=0$.

Pick $N^0_0\in\mathcal{C}$ so that $p\in N^0_0$. Pick an $(N^0_0, \mathbb{P})$-generic sequence $s^0_0$ beginning with $p$. By Lemma \ref{lma:gseqstrcl}, $s^0_0$ has a common extension in $\mathbb{P}$. Now set $r^0_0=\bigwedge s^0_0$ and $p^0_0=\tau(0, r^0_0)$. Then we have (\ref{sec6:ptaur}) and by Lemma \ref{lma:strgen} we have (\ref{sec6:pgen}), and the latter assures that $\seq{N^0_0, p^0_0}$ forms a legal $0$-th move of Player $\mathrm{I}$ in $\mathfrak{G}(\mathfrak{A}, p)$. Set $\seq{\eta^0_0, q^0_0}=\upsilon(\seq{N^0_0, p^0_0})$ and $R^0_0=\seq{\seq{N^0_0, p^0_0}, \seq{\eta^0_0, q^0_0}}$. Then (\ref{sec6:qleqp})and (\ref{sec6:Rplay}) are satisfied.

\smallskip\noindent
\underline{Case 2} $n=0$ and $\gamma=\zeta+1$.

Pick $N^0_\gamma\in\mathcal{C}$ so that $\seq{N^0_\zeta, r^0_\zeta, p^0_\zeta, q^0_\zeta, \eta^0_\zeta, R^0_\zeta}\in N^0_\gamma$. Then (\ref{sec6:Ninc}) is satisfied. Pick an $(N^0_\gamma, \mathbb{P})$-generic sequence $s^0_\gamma$ beginning with $q^0_\zeta$. By Lemma \ref{lma:gseqstrcl}, $s^0_\gamma$ has a common extension in $\mathbb{P}$. Now set $r^0_\gamma=\bigwedge s^0_\gamma$, $p^0_\gamma=\tau(\gamma, r^0_\gamma)$, $\seq{\eta^0_\gamma, q^0_\gamma}=\upsilon(\seq{N^0_\gamma, p^0_\gamma})$ and $R^0_\gamma=\seq{\seq{N^0_\gamma, p^0_\gamma}, \seq{\eta^0_\gamma, q^0_\gamma}}$. Then (\ref{sec6:rleqq}) is clear, and (\ref{sec6:ptaur})--(\ref{sec6:Rplay}) are obtained in the same way as in Case 1.

\smallskip\noindent
\underline{Case 3} $n=0$ and $\gamma$ is limit.

Let $N^0_\gamma=\bigcup\{N^0_\zeta\mid\zeta<\gamma\}$ and $r^0_\gamma=\bigwedge\{r^0_\zeta\mid\zeta<\gamma\}$. Then define $p^0_\gamma$, $q^0_\gamma$, $\eta^0_\gamma$ and $R^0_\gamma$ in the same way as in Case 2. By (\ref{sec6:Ninc}) and (\ref{sec6:Nconti}) for $m=0$ and $\xi<\gamma$, $\seq{N^0_\zeta\mid\zeta<\gamma}$ forms a $\subseteq$-increasing continuous chain of elements of $\mathcal{C}$, and thus we have $N^0_\gamma\in\mathcal{C}$ and (\ref{sec6:Nconti}). By (\ref{sec6:rleqq}), (\ref{sec6:rconti}), (\ref{sec6:ptaur}) and (\ref{sec6:qleqp}) for $m=0$ and $\xi<\gamma$  we have that
$$
\begin{matrix}
\mathrm{I}: & r^0_0 & r^0_1 & \cdots &&r^0_{\omega+1}& \cdots\\
\mathrm{II}: & \hspace{36pt} p^0_0 & \hspace{36pt} p^0_1 & \cdots & p^0_\omega &\hspace{36pt} p^0_{\omega+1} & \cdots
\end{matrix}
\quad\text{($\gamma$ turns)}
$$
forms a part of a play of $G_{\omega_1+1}(\mathbb{P})$ where Player $\mathrm{II}$ plays according to $\tau$, and thus we have $r^0_\gamma\in\mathbb{P}$ and (\ref{sec6:rconti}). (\ref{sec6:ptaur}) is clear by the construction. Pick a strictly increasing sequence $\seq{\gamma_j\mid j<\omega}$ of ordinals converging to $\gamma$. Then by the above observation we have $r^0_\gamma=\bigwedge\{p^0_{\gamma_j}\mid j<\omega\}$. Now by (\ref{sec6:Ninc}) and (\ref{sec6:pgen}) for $m=0$ and $\xi<\gamma$ and (\ref{sec6:Nconti}) for $m=0$ and $\xi=\gamma$, we have that $\seq{p^0_{\gamma_j}\mid j<\omega}$ is an $(N^m_\gamma, \mathbb{P})$-generic sequence, and thus by Lemma \ref{lma:strgen} we have (\ref{sec6:pgen}). (\ref{sec6:qleqp}) and (\ref{sec6:Rplay}) are obtained in the same way as in the former cases.

\smallskip\noindent
\underline{Case 4} $n=l+1$ and $\gamma=0$.

First note that by (\ref{sec6:rleqq}), (\ref{sec6:rconti}), (\ref{sec6:ptaur}) and (\ref{sec6:qleqp}) for $m=l$ we have that
$$
\begin{matrix}
\mathrm{I}: & r^l_0 & r^l_1 & \cdots &&r^l_{\omega+1}& \cdots\\
\mathrm{II}: & \hspace{36pt} p^l_0 & \hspace{36pt} p^l_1 & \cdots & p^l_\omega &\hspace{36pt} p^l_{\omega+1} & \cdots
\end{matrix}
\quad\text{($\omega_1$ turns)}
$$
forms a part of a play of $G_{\omega_1+1}(\mathbb{P})$ where Player $\mathrm{II}$ plays according to $\tau$. Therefore $\seq{r^l_\zeta\mid\zeta<\omega_1}$ has a common extension in $\mathbb{P}$. Now pick $N^n_0\in\mathcal{C}$ so that $\seq{N^l_\zeta, r^l_\zeta, p^l_\zeta, q^l_\zeta, \eta^l_\zeta, R^l_\zeta\mid \zeta<\omega_1}\in N^n_0$. This gives (\ref{sec6:Nexpand}). By elementarity there is a common extension $q_l\in\mathbb{P}$ of $\seq{r^l_\zeta\mid\zeta<\omega_1}$ in $N^n_0$. Pick an $(N^n_0, \mathbb{P})$-generic sequence $s^n_0$ beginning with $q_l$. By Lemma \ref{lma:gseqstrcl}, $s^n_0$ has a common extension in $\mathbb{P}$. Now set $r^n_0=\bigwedge s^n_0$ and  $p^n_0=\tau(0, r^n_0)$.  Then we have (\ref{sec6:ptaur}) and by Lemma \ref{lma:strgen} we have (\ref{sec6:pgen}). By (\ref{sec6:Rplay}) for $m=l$ and $\xi=\delta ^n_0$, $R^l_{\delta^n_0}$ is a part of a play of $\mathfrak{G}(\mathfrak{A}, p)$ where Player $\mathrm{II}$ plays according to $\upsilon$, ending with Player $\mathrm{I}$'s move $\seq{N^l_{\delta^n_0}, p^l_{\delta^n_0}}$ followed by Player $\mathrm{II}$'s move $\seq{\eta^l_{\delta^n_0}, q^l_{\delta^n_0}}$.
We show that $\seq{N^n_0, p^n_0}$ is a legal move of Player $\mathrm{I}$ following $R^l_{\delta^n_0}$.
(\ref{sec6:pgen}) assures that the rule (\ref{rulea}) is satisfied. By the above construction and (\ref{sec6:rleqq}) for $m=l$ and $\xi=\delta^n_0+1$ we have that $q^l_{\delta^n_0}\geq_{\mathbb{P}}r^l_{\delta^n_0+1}\geq_{\mathbb{P}}q_l\geq_{\mathbb{P}}r^n_0=p^n_0\restrict N^n_0$. This assures that the rule (\ref{ruleb}) is satisfied.
For the rule (\ref{rulec}), first note that by (\ref{sec6:Ninc}) and (\ref{sec6:Nconti}) for $m=l$ and $\xi<\omega_1$, $\seq{N^l_\zeta\mid\zeta<\omega_1}$ is a $\subseteq$-continuous strictly increasing chain, and thus $\eta^l:=\omega_2\cap\bigcup\{N^l_\zeta\mid\zeta<\omega_1\}$ is an ordinal below $\omega_2$. Since $\seq{N^l_\zeta\mid\zeta<\omega_1}\in N^n_0$, for each $\zeta<\delta^n_0$ it holds that $N^l_\zeta\in N^n_0$, and thus we have that $N^l_{\delta^n_0}=\bigcup\{N^l_\zeta\mid\zeta<\delta^n_0\}\subseteq N^n_0$. On the other hand, by elementarity, for each $\alpha\in\eta^l\cap N^n_0$ there exists $\zeta\in\omega_1\cap N^n_0=\delta^n_0$ such that $\alpha\in N^l_\zeta\subseteq N^l_{\delta^n_0}$. Moreover, since $\eta^l\in N^n_0\setminus N^l_{\delta^n_0}$ we have $N^l_{\delta^n_0}\cap\omega_2\not=N^n_0\cap\omega_2$. Therefore we have $N^l_{\delta^n_0}\prec^s_{\eta^l} N^n_0$, in particular $N^l_{\delta^n_0}\prec^s_{\eta^l_{\delta^n_0}} N^n_0$, since $\eta^l_{\delta^n_0}\in N^l_{\delta^n_0+1}$ holds by (\ref{sec6:Ninc}) for $m=l$ and $\xi=\delta^n_0+1$ and thus $\eta^l_{\delta^n_0}<\eta^l$ holds. Now let $\seq{\eta^n_0, q^n_0}=\upsilon(\concat{R^m_{\delta^n_0}}{\seq{\seq{N^n_0, p^n_0}}})$ and $R^n_0=\concat{R^m_{\delta^n_0}}{\seq{\seq{N^n_0, p^n_0},\seq{\eta^n_0, q^n_0}}}$. Now (\ref{sec6:qleqp}), (\ref{sec6:Rplay}) and (\ref{sec6:Rext}) are clear.

\smallskip\noindent
\underline{Case 5} $n=l+1$ and $\gamma=\zeta+1$.

Pick $N^n_\gamma\in\mathcal{C}$ so that $\seq{N^n_\zeta, r^n_\zeta, p^n_\zeta, q^n_\zeta, \eta^n_\zeta, R^n_\zeta}\in N^n_\gamma$. Pick an $(N^n_\gamma, \mathbb{P})$-generic sequence $s^n_\gamma$ beginning with $q^n_\zeta$. Then set $r^n_\gamma=\bigwedge s^n_\gamma$, $p^n_\gamma=\tau(\gamma, r^n_\gamma)$, $\seq{\eta^n_\gamma, q^n_\gamma}=\upsilon(\concat{R^l_{\delta^n_\gamma}}{\seq{\seq{N^n_\gamma, p^n_\gamma}}})$ and $R^n_\gamma=\concat{R^l_{\delta^n_\gamma}}{\seq{\seq{N^n_\gamma, p^n_\gamma},\seq{\eta^n_\gamma, q^n_\gamma}}}$. (\ref{sec6:Ninc}) is clear, and this and (\ref{sec6:Nexpand}) for $m=n$ and $\xi=\zeta$ imply (\ref{sec6:Nexpand}). (\ref{sec6:rleqq}) and (\ref{sec6:ptaur})--(\ref{sec6:Rext}) are obtained in the same way as in the former cases.

\smallskip\noindent
\underline{Case 6} $n=l+1$ and $\gamma$ is limit.

Let $N^n_\gamma=\bigcup\{N^n_\zeta\mid\zeta<\gamma\}$, $r^n_\gamma=\bigwedge\{r^n_\zeta\mid\zeta<\gamma\}$ and define $p^n_\gamma$, $q^n_\gamma$, $\eta^n_\gamma$ and $R^n_\gamma$ in the same way as in Case 5. Then (\ref{sec6:Nexpand}), (\ref{sec6:Nconti}) and (\ref{sec6:rconti})--(\ref{sec6:Rext}) are obtained in the same way as in the former cases.

Now for each $n<\omega$ let $C_n:=\{\zeta<\omega_1\mid\delta^n_\zeta=\zeta\}$. For each $n<\omega$, by (\ref{sec6:Ninc}) and (\ref{sec6:Nconti}) we have that $\seq{\delta^n_\zeta\mid\zeta<\omega_1}$ is a strictly increasing continuous sequence, and thus $C_n$ is a club subset of $\omega_1$. Therefore $C=\bigcap\{C_n\mid n<\omega\}$ is nonempty. Pick $\gamma\in C$. Then $\delta^n_\gamma=\gamma$ holds for every $n<\omega$, and thus by (\ref{sec6:Rext}) $\bigcup\{R^n_\gamma\mid n<\omega\}$ forms a full play in $\mathfrak{G}(\mathfrak{A}, p)$ where Player $\mathrm{II}$ plays according to $\upsilon$. This shows that $\upsilon$ is not a winning strategy.
\qed (Lemma \ref{lma:I_wins})

\begin{proof}[Proof of Lemma \ref{lma:stationary}]
Suppose $\dot{Z}$ is as in the assumption of the lemma.
Let $r$ be any condition in $\mathbb{S}$, and $\dot{f}$ any $\mathbb{S}$-name satisfying 
$$\force_\mathbb{S}\text{\lq\lq$\dot{f}:{}^{<\omega}\check{(\omega^V_2)}\to\check{(\omega^V_2)}$.\rq\rq}$$
Let $\theta$ be an uncountable regular cardinal satisfying $\{\mathbb{S}, \tau, \dot{f}\}\in H_\theta$, and set $\mathfrak{A}=\seq{H_\theta, \in, \mathbb{P}, \mathbb{S}, \tau, \dot{f}, \{r\}}$. To have the conclusion of the lemma, it is enough to show that there exists a countable $N\prec\mathfrak{A}$ and an $(N, \mathbb{S})$-generic $s$ extending $r$ such that $$s\force_{\mathbb{S}}\text{\lq\lq$\dot{Z}(\check{\eta})\nsubseteq\check{N}$,\rq\rq}$$ where $\eta$ denotes $\sup(N\cap\omega^V_2)$.

Let $p=\mathrm{lm}(r)\in\mathbb{P}$. By Lemma \ref{lma:I_wins} Player $\mathrm{I}$ has a winning strategy $\rho$ for the game $\mathfrak{G}(\mathfrak{A}, p)$. We will consider two plays of $\mathfrak{G}(\mathfrak{A}, p)$, named Play $A$ and $B$, played simultaneously in the way described below.

Moves in these plays are denoted as follows:

\begin{table}[h]
\begin{tabular}{cccccccc}
\underline{Play $A$}&&&&&&&\\
& $\mathrm{I}$ & : & $\seq{N^A_0, p^A_0}$ & & $\seq{N^A_1, p^A_1}$ & & $\cdots$\\
& $\mathrm{II}$ & : & & $\seq{\eta^A_0, q^A_0}$ & & $\seq{\eta^A_1, q^A_1}$ & $\cdots$\\
\underline{Play $B$}&&&&&&&\\
& $\mathrm{I}$ & : & $\seq{N^B_0, p^B_0}$ & & $\seq{N^B_1, p^B_1}$ & & $\cdots$\\
& $\mathrm{II}$ & : & & $\seq{\eta^B_0, q^B_0}$ & & $\seq{\eta^B_1, q^B_1}$ & $\cdots$
\end{tabular}
\end{table}

In both plays Player $\mathrm{I}$ plays according to $\rho$. Player $\mathrm{II}$ chooses her moves as follows:
$$
\begin{cases}
\seq{\eta^A_n, q^A_n}=\seq{\sup (N^B_n\cap\omega_2), \tau(n, p^B_n)},\\
\seq{\eta^B_n, q^B_n}=\seq{\sup (N^A_{n+1}\cap\omega_2), p^A_{n+1}}.
\end{cases}
$$

By the construction we have:
\begin{enumerate}[(i)]
  \item $N^A_0=N^B_0$.\label{item:n0}
  \item $N^A_0\prec^s_{\sup(N^B_0\cap\omega_2)}N^A_1\prec^s_{\sup(N^B_1\cap\omega_2)}N^A_2\prec^s_{\sup(N^B_2\cap\omega_2)}\cdots$.\label{item:na}
  \item $N^B_0\prec^s_{\sup(N^A_1\cap\omega_2)}N^B_1\prec^s_{\sup(N^A_2\cap\omega_2)}N^B_2\prec^s_{\sup(N^A_3\cap\omega_2)}\cdots$.\label{item:nb}
  \item $p\geq_\mathbb{P}p^A_0\restrict N^A_0=p^B_0\restrict N^B_0\geq_\mathbb{P}p^A_0=p^B_0$.\label{item:p0}
  \item For every $n<\omega$, $p^A_n$ is $(N^A_n, \mathbb{P})$-strongly generic and $p^B_n$ is $(N^B_n, \mathbb{P})$-strongly generic.\label{item:stgen}
  \item For every $n<\omega$, it holds that
 $$
    p^B_n\geq_\mathbb{P}\tau(n, p^B_n)=q^A_n\geq_\mathbb{P}p^A_{n+1}\restrict N^A_{n+1}\geq_\mathbb{P}p^A_{n+1}=q^B_n\geq_\mathbb{P}p^B_{n+1}\restrict N^B_{n+1}\geq_\mathbb{P}p^B_{n+1}.
 $$\label{item:ineq}
\end{enumerate}

By (\ref{item:ineq}), $\seq{p^B_n, q^A_n\mid n<\omega}$ forms a part of a play in $G_{\omega_1+1}(\mathbb{P})$ where Player $\mathrm{II}$ plays along $\tau$, and thus we have
\begin{equation}\label{eq:q}
q:=\bigwedge\{p^B_n\mid n<\omega\}=\bigwedge\{p^A_n\restrict N^A_n\mid n<\omega\}=\bigwedge\{p^B_n\restrict N^B_n\mid n<\omega\}\in\mathbb{P}.
\end{equation}
Let $N^A=\bigcup\{N^A_n\mid n<\omega\}$, $N^B=\bigcup\{N^B_n\mid n<\omega\}$. By (\ref{item:na}) and  (\ref{item:nb}), $N^A$, $N^B$ are countable elementary submodels of $\mathfrak{A}$. Moreover, by (\ref{item:n0})--(\ref{item:nb}) we have
\begin{itemize}
  \item $N^A\cap\omega_1=N^A_0\cap\omega_1=N^B_0\cap\omega_1=N^B\cap\omega_1$ and
  \item $\sup (N^A\cap\omega_2)=\sup (N^B\cap\omega_2)$.
\end{itemize}
We will denote these two ordinals as $\delta$ and $\eta$.

Note also that, by (\ref{item:na}), (\ref{item:nb}), (\ref{item:stgen}) and (\ref{item:ineq}), $\seq{p^A_n\mid n<\omega}$ is an $(N^A, \mathbb{P})$-generic sequence and $\seq{p^B_n\mid n<\omega}$ is an $(N^B, \mathbb{P})$-generic sequence. Therefore by Lemma \ref{lma:strgen} we have
\begin{equation}
q=q\restrict N^A=q\restrict N^B.\label{eqn:qqnaqnb}
\end{equation}

Now let $q'=\tau(\delta, q)$. Since $N^A\cap N^B\cap\omega_2=N^A_0\cap\omega_2$ is bounded in $\eta$ by (\ref{item:na}) and (\ref{item:nb}), it holds that
$$
q'\force_{\mathbb{P}}\text{\lq\lq$\dot{Z}(\check{\eta})\nsubseteq\check{N^A}\lor\dot{Z}(\check{\eta})\nsubseteq\check{N^B}$.\rq\rq}
$$
Therefore, without loss of generality we may assume that there exists $q''\leq_{\mathbb{P}}q'$ such that
\begin{equation}
q''\force_{\mathbb{P}}\text{\lq\lq$\dot{Z}(\check{\eta})\nsubseteq\check{N^A}$.\rq\rq}\label{eqn:q''forces}
\end{equation}

Now since $\mathrm{lm}(r)=p\geq_\mathbb{P}q$ and $r\in N^A$ holds, repeatedly using Lemma \ref{lma:onestep}, we obtain an $(N^A, \mathbb{S})$-generic sequence $\seq{r_n\mid n<\omega}$ such that $r_0=r$ and that $\mathrm{lm}(r_n)\geq_\mathbb{P}q$ holds for every $n<\omega$. Let us denote $\bigcup\{r_n\mid n<\omega\}=\seq{a_\gamma, b_\gamma\mid\gamma<\overline{\delta}}$. By an easy density argument we have $\overline{\delta}=N^A\cap\omega_1=\delta$. Since $\mathrm{lm}$ is a projection, $\seq{\mathrm{lm}(r_n)\mid n<\omega}$ is an $(N^A, \mathbb{P})$-generic sequence, and thus by Lemma \ref{lma:strgen} and (\ref{eqn:qqnaqnb}) we have
\begin{equation}
\bigwedge\{b_\gamma\mid\gamma<\delta\}=\bigwedge\{\mathrm{lm}(r_n)\mid n<\omega\}=q\restrict N^A=q.\label{eqn:bgqN}
\end{equation}
Therefore
$$
s=\concat{(\bigcup\{r_n\mid n<\omega\})}{\seq{q, q''}}
$$
forms an $(N^A, \mathbb{S})$-strongly generic condition satisfying $\mathrm{lm}(s)\leq_\mathbb{P}q''$. By (\ref{eqn:q''forces}) and absoluteness we have
$$
s\force_\mathbb{S}\text{\lq\lq$\dot{Z}(\check{\eta})\nsubseteq\check{N^A}$.\rq\rq}
$$
This shows that $s$ is what we need, and completes the proof of the lemma.
\end{proof}

\section*{Acknowledgments}
The author would like to express his gratitude to David Asper\'o, Yo Matsubara, Tadatoshi Miyamoto, Hiroshi Sakai and Toshimichi Usuba for their helpful comments.

\bibliographystyle{plain}
\bibliography{locco}
\end{document}